\providecommand{\U}[1]{\protect\rule{.1in}{.1in}}
\newtheorem{theorem}{Theorem}[section]
\newtheorem{proposition}[theorem]{Proposition}
\newtheorem{lemma}[theorem]{Lemma}
\newtheorem{corollary}[theorem]{Corollary}
\newtheorem{remark}{Remark}
\theoremstyle{remark}
\newcommand{\remove}[1]{ }
\newcommand{\R}{{\mathbb R}}
\newcommand{\T}{{\mathbb T}}
\newcommand{\Z}{{\mathbb Z}}
\newcommand{\N}{{\mathbb N}}
\newcommand{\ft}{{\mathcal{F}}}
\newcommand{\supp}{{\mbox{supp}}}
\newcommand{\px}{\partial_x}
\newcommand{\pt}{\partial_t}
\newcommand{\wt}{\widetilde}
\newcommand{\wh}{\widehat}
\newcommand{\vep}{\varepsilon}
\def\MB{\mathbb}
\newcommand{\bra}[1]{\left\langle #1\right\rangle}
\providecommand{\norm}[1]{\left\lVert#1\right\rVert}
\numberwithin{equation}{section}
\begin{document}
\title[Control and stability for higher-order KdV-type]{On the control issues for higher-order nonlinear dispersive equations on the circle}
\author[Capistrano--Filho]{Roberto de A. Capistrano--Filho}
\address{Departamento de Matem\'atica, Universidade Federal de Pernambuco, S/N Cidade Universit\'aria, 50740-545, Recife (PE), Brazil}
\email{roberto.capistranofilho@ufpe.br}
\author[Kwak]{Chulkwang Kwak}
\address{Department of Mathematics, Ewha Womans University, Seoul 03760, Korea}
\email{ckkwak@ewha.ac.kr}
\author[Vielma Leal]{Francisco J. Vielma Leal}
\address{Universidade Estadual de Campinas, Instituto de Matem\'atica, Estat\'{i}tica e Computa\c{c}\~{a}o Cient\'{i}fica (IMECC), 13083-859, Campinas (SP), Brazil}
\email{fvielmaleal7@gmail.com ; vielma@ime.unicamp.br}
\subjclass[2020]{Primary: 35Q53, 93B05, 93D15, 35A21}
\keywords{KdV-type equation, Control problems, Propagation of singularities, Bourgain spaces}

\begin{abstract}
The local and global control results for a general higher-order KdV-type operator posed on the unit circle are presented. Using spectral analysis, we are able to prove local results, that is, the equation is locally controllable and exponentially stable. To extend the local results to the global one we captured the smoothing properties of the Bourgain spaces, the so-called \textit{propagation of singularities}, which are proved with a new perspective. These propagation, together with the \textit{Strichartz estimates}, are the key to extending the local control properties to the global one, precisely, higher-order KdV-type equations are globally controllable and exponentially stabilizable in the Sobolev space $H^{s}(\mathbb{T})$ for any $s \geq 0$. Our results recover previous results in the literature for the KdV and Kawahara equations and extend, for a general higher-order operator of KdV-type, the Strichartz estimates as well as the propagation results, which are the main novelties of this work.
\end{abstract}
\maketitle


\section{Introduction\label{sec1}}

\subsection{Model description}

The full water wave system is too complex to allow to easily derive \linebreak and rigorously from it relevant qualitative information on the dynamics of the waves. Alternatively, under suitable assumption on amplitude, wavelength, wave steepness and so on, the study on asymptotic models for water waves has been extensively investigated to understand the full water wave system, see, for instance, \cite{BCS, BCL, ASL, BLS, Lannes, Saut} and references therein for a rigorous justification of various asymptotic models for surface and internal waves.

\medskip

Particularly, formulating the waves as a free boundary problem of the incompressible, \linebreak irrotational Euler equation in an appropriate non-dimensional form, one has two non-dimensional parameters $\delta := \frac{h}{\lambda}$ and $\varepsilon := \frac{a}{h}$, where the water depth, the wave length and the amplitude of the free surface are parameterized as $h, \lambda$ and $a$, respectively. Moreover, another non-dimensional parameter $\mu$ is called the Bond number, which measures the importance of gravitational forces compared to surface tension forces. The physical condition $\delta \ll 1$ characterizes the waves, which are called long waves or shallow water waves, but there are several long wave approximations according to relations between $\varepsilon$ and $\delta$, specially,

\medskip

\begin{enumerate}
\item Korteweg-de Vries (KdV): $\varepsilon = \delta^2 \ll 1$ and $\mu \neq \frac13$.
\item Kawahara: $\varepsilon = \delta^4 \ll 1$ and $\mu = \frac13 + \nu\varepsilon^{\frac12}$.
\end{enumerate}

\medskip

Under the regime for $\varepsilon, \delta, \mu$ given in Item (1), Korteweg and de Vries \cite{Korteweg}\footnote{This equation indeed firstly introduced by Boussinesq \cite{Boussinesq}, and Korteweg and de Vries rediscovered it twenty years later.} derived the following equation well-known as a central equation among other dispersive or shallow water wave models called the KdV equation from the equations for capillary-gravity waves: 
\[\pm2 u_t + 3uu_x +\left( \frac13 - \mu\right)u_{xxx} = 0.\]
In connection with the  critical Bond number $\mu = \frac13$, Hasimoto \cite{Hasimoto1970} derived a fifth-order KdV equation of the form
\[\pm2 u_t + 3uu_x - \nu u_{xxx} +\frac{1}{45}u_{xxxxx} = 0\]
in the regime for $\varepsilon, \delta, \mu$ given in Item (2), which is nowadays called the Kawahara equation.

\medskip

Our main focus is to investigate the higher-order extension of KdV and Kawahara equations. Consider the Cauchy problem for the following higher-order KdV-type equation posed on the unit circle $\mathbb{T}$:
\begin{equation}\label{eq:kdv_int}
\begin{cases}
\pt u + (-1)^{j+1} \px^{2j+1}u + \frac12 \px(u^2)=0,\\
u(0,x) = u_0(x) \in H^s(\T),
\end{cases}
\qquad (t,x) \in \R \times \T, 
\end{equation}
for $j \in \MB{N}$ and $u$ is a real-valued function.  Especially, \eqref{eq:kdv_int} is called KdV and Kawahara equation when $j=1$ and $j=2$, respectively. These types of equations have conservation laws such as  
\begin{equation}\label{laws}
\begin{split}
M[u]&= \int_{\MB{T}} u \; dx, \quad  \text{(Mass)} \\
E[u] &= \int_{\MB{T}} u^2 \; dx, \\
H[u] &= \int_{\MB{T}} \frac{1}{2} \left(\partial_x^j u\right) ^2 - \frac{1}{6} u^3 \; dx,\quad\text{(Hamiltonian)}.
\end{split}
\end{equation}
Furthermore, \eqref{eq:kdv_int} is the Hamiltonian equation with respect to $H[u]$ defined in \eqref{laws}. In other words, we can rewrite \eqref{eq:kdv_int} as follows:
\begin{equation*}
u_t = \partial_x \nabla_u H\left(u\left(t\right)\right) = \nabla_{\omega} H\left(u\left(t\right)\right)
\end{equation*}
where $\nabla_u$ is the $L^2$ gradient and $\nabla_{\omega} = \nabla_{\omega_{-\frac{1}{2}}}$ is the symplectic gradient 
\begin{equation*}
\omega_{-\frac{1}{2}}\left(u,v\right) := \int_{\MB{T}} u \partial_x^{-1} v dx.
\end{equation*}
These three conservation laws play various roles (in particular, in particular, to determine the global behavior of solutions and the global control properties of equation \eqref{eq:kdv_int}) in the study of the partial differential equations. 

\subsection{Problems under consideration} In this paper, we prove that the higher-order KdV-type equation\footnote{One may generalize the equation \eqref{eq:kdv} as
\[\pt u + \sum_{m=0}^j \alpha_m \px^{2m+1}u + \frac12 \px(u^2)=f,\]
where $\alpha_m \in \R$. However, main analyses in the paper are almost analogous without additional difficulties, thus the equation \eqref{eq:kdv} does not lose the generality in a sense of the aim in this paper. See Remark \ref{rem:restriction}.}
\begin{equation}\label{eq:kdv}
\begin{cases}
\pt u + (-1)^{j+1} \px^{2j+1}u + \frac12 \px(u^2)=f(t,x),\\
u(0,x) = u_0(x) \in H^s(\T),
\end{cases}
\qquad (t,x) \in \R \times \T, 
\end{equation}
posed on periodic domain $\mathbb{T}$ is globally controllable in $H^s$, for $s\geq0$, when we introduce a forcing term $f=f(t,x)$ added to the equation as a control input. Here, $f$ is assumed to be supported in a given open set $\omega
\subset\mathbb{T}$. The following control problems are considered:

\medskip

\noindent\textbf{Exact control problem}: \textit{Given an initial state
}$u_{0}$\textit{ and a terminal state }$u_{1}$\textit{ in a certain space, can
one find an appropriate control input }$f$\textit{ so that the equation \eqref{eq:kdv} admits a solution }$u$\textit{ which satisfies} $\left.u\right\vert _{t=T}=u_{1}$?

\medskip

\noindent\textbf{Stabilization problem}: \textit{Can one find a feedback
control law} $f=Ku$ \textit{so that the resulting closed-loop system}
\[\pt u + (-1)^{j+1} \px^{2j+1}u + \frac12 \px(u^2)=Ku,\quad (t,x)\in\mathbb{R}\times\mathbb{T}, \]
\textit{is asymptotically stable at an equilibrium point as $t\rightarrow+\infty$?}

\medskip

The higher-order KdV-type equations keep its mass conserved, see for instance \eqref{laws}, thus
\[
\frac{d}{dt}\int_{\mathbb{T}}u (t,x) \; dx = 0,
\]
for any $t\in\mathbb{R}$ when no control is in action ($f\equiv0$). In
applications, one would also  like to keep the mass conserved while conducting
control. For that purpose, a natural constraint on
our control input $f$ is as follows:%
\[
\int_{\mathbb{T}}f(t,x) \; dx=0\text{, }\forall t\in\mathbb{R}
\text{.}%
\]
Thus, as in \cite{Russel96}, the natural control input $f(t,x)$ is chosen to be of the
form%
\begin{equation}
f(t,x)  = [Gh] (t,x)  :=g(x)  \left(  h(t,x)  -\int_{\mathbb{T}}g(y) h(t,y) \; dy\right),  \label{k5a}
\end{equation}
where $h$ is considered as a new control input, and $g(x)$ is a given nonnegative smooth function such that
\[
2\pi\left[  g\right]  =\int_{\mathbb{T}}g\left(  x\right)  dx=1\text{.}%
\]
Here, we denote $\omega$ by the set $\omega :=\left\{  g>0\right\}$, where the control function is effectively acting.

\subsection{Review of the results in the literature} 
The local and global well-posedness of \eqref{eq:kdv_int} were widely studied. For the local well-posedness result, Gorsky and Himonas \cite{Gorsky:2009eg} firstly proved this problem for $s\ge-\frac12$ and Hirayama \cite{Hirayama} improved for $s \ge -\frac{j}{2}$. Both works are based on the standard Fourier restriction norm method. Hirayama improved the bilinear estimate by using the factorization of the resonant function.

The results of the global well-posedness for \eqref{eq:kdv_int}, when $j=1,2$, were proved by Colliander et al. \cite{CKSTT1} and Kato \cite{Kato}, respectively, via "I-method". In \cite{HongKwak} the authors extend the results of \cite{CKSTT1} and \cite{Kato} for $j\ge3$. The method basically follows the argument in \cite{CKSTT1} for periodic KdV equation, while some estimates are slightly different. More precisely, they showed that for $j \ge 3$ and $s \ge -\frac{j}{2},$ the IVP \eqref{eq:kdv_int} is globally well-posed in $H^s(\T)$.

Regarding the control theory, when $j=1$, the system \eqref{eq:kdv} has good control properties. The study of the controllability and stabilization to the KdV equation started with the work of Russell and Zhang  \cite{RusselZhang1993}  for the linear system 
\begin{equation}
u_{t}+u_{xxx}=f\text{, }
\label{I5}%
\end{equation}
with periodic boundary conditions and an internal control $f$. Since then, both controllability and stabilization problems have been intensively studied. 

It is well-known that \eqref{I5} with $f = -uu_x$ allows an infinite set of conserved integral
quantities, for instance, $M[u]$ and $E[u]$, defined in \eqref{laws}.  From the historical origins of the KdV equation involving the behavior of water waves in a shallow channel \cite{Boussinesq,Korteweg,Miura}, it is natural to think of $M[u]$ and $E[u]$ as expressing
conservation of \textit{volume (or mass)} and \textit{energy}, respectively.

The Russell and Zhang's work  \cite{RusselZhang1993} is purely linear.  In fact, until Bourgain \cite{Bourgain1993-1} discovered a subtle smoothing property of solutions of the KdV equation posed on a periodic domain, no results of the nonlinear problems were solved. This novelty, discovered by Bourgain, has played a crucial role in the proof of the results in \cite{Russel96}.

Specifically, in \cite{Russel96} the authors studied the nonlinear equation associated to \eqref{I5} from a control point of
view with a forcing term $f=f(t,x)$ added to the equation as a control input:
\begin{equation}
u_t + u_{xxx} + uu_x = f, \quad (t,x) \in \R \times \T.
\label{I6}%
\end{equation}
With this in hand, Russell and Zhang were able to show the local exact controllability and local exponential stabilizability for the system \eqref{I6}. Indeed, the results presented in  \cite{Russel96} are essentially linear; they are more or less small perturbations of the linear results. After these works, Laurent \textit{et al.} \cite{Laurent} show that still it is possible to guide the system \eqref{I6} from a given initial state $u_{0}$ to a given terminal state $u_{1}$ when $u_{0}$ and $u_{1}$ have large amplitude by choosing an appropriate control input. Furthermore, they showed that the large amplitude solutions of the closed-loop system \eqref{I6} decay exponentially as $t \to \infty$. Hence, the authors in \cite{Laurent} proved global exact controllability and global exponential stabilizability extending the results obtained by Russell and Zhang in \cite{Russel96}. These global results are established with the aid of certain of propagation of compactness and regularity in Bourgain spaces for the solutions of the associated linear system of \eqref{I6}.

Considering $j=2$ the system \eqref{eq:kdv} is the so-called Kawahara equation
\begin{equation}\label{kawahara}
\pt u -\px^5u + \frac12 \px(u^2)=f,  \qquad (t,x) \in \R \times \T.
\end{equation}
Recently, the first author, in \cite{CaKawahara}, studied the stabilization problem and conjectured a critical set phenomenon for Kawahara equations as occurs with the KdV equation \cite{CaZh,Rosier} and Boussinesq KdV-KdV system \cite{CaPaRo1}, for example. Moreover, as far as we know, the control problem was, first, studied in \cite{zhang,zhang1} when the authors considered the Kawahara equation on a periodic domain $\mathbb{T}$ with a distributed control of the form \eqref{k5a}. First, the authors were able to prove the local controllability results for this equation in \cite{zhang}. Aided by smoothing properties of the system in  Bourgain spaces, they were able to show that the Kawahara equation  is globally exactly controllable and globally exponentially stabilizable (see \cite{zhang1}).

We caution that this is only a small sample of the extant equations with the similar structure to the system \eqref{eq:kdv}, \eqref{I5} and \eqref{kawahara}. For an extensive review of the physical meanings of these equations, as well as well-posedness and controllability results the authors suggest the following nice references \cite{CaGomes,cerpa,Hirayama,Kwak2019} and the references therein.

\subsection{Notation and main results} Let us introduce some notation and present the main results of the manuscript.  Let us consider the Fourier and inverse Fourier transforms with respect to the spatial variable $x \in \T$,
\[\mathcal F_x(f)(k) = \widehat{f}(k) := \frac{1}{\sqrt{2\pi}}\int_{\T} e^{-ikx}f(x) \; dx \quad \mbox{and} \quad \mathcal F_x^{-1}(f)(x) = \frac{1}{\sqrt{2\pi}}\sum_{k \in \Z} e^{ikx}f(k),\]
respectively. Additionally, the space-time Fourier and inverse Fourier transforms are
\[\mathcal F(f)(\tau, k) = \widetilde{f}(\tau, k) := \frac{1}{2\pi}\int_{\R \times \T} e^{-it\tau}e^{-ikx}f(x) \; dx\;dt\]
and
\[\mathcal F^{-1}(f)(t,x) = \frac{1}{2\pi}\int_\R\sum_{k \in \Z} e^{it\tau}e^{ikx} f(\tau,k) \; d\tau,\]
respectively.

Consider now the $H^s(\mathbb{T}):=H^s$ space with the inner product as
\[(f,g)_{H^s} = (f,g)_s := \sum_{k\in \Z} \langle k \rangle^{2s} \widehat{f}(k)\overline{\widehat{g}(k)},\]
where $\langle \cdot \rangle = (1+|\cdot|^2)^{\frac12}$. We simply denote the $H^0:=L^2$ inner product by $(\cdot, \cdot)$. It naturally defines $H^s$ norm as $\norm{f}_{H^s} = \sqrt{(f,f)_{H^s}}$. We will use $H^s_0(\T)$ as the subspace of $H^s(\T)$ whose elements obey the mean zero condition, i.e.,
\[H_0^s(\T) = \left\{f \in H^s(\T) : \int_{\T} f = 0\right\}.\]

The aim of this manuscript is to address the control and stabilization (particularly global) issues. However, before presenting the global results, let us present a theorem that shows the exact control result.

\begin{theorem}[\cite{ZB2018}]
\label{control_zhang}Let $T>0$ and $s\geq0$ be given. There exists a
$\delta>0$ such that for any $u_{0}$, $u_{1}\in H^{s}\left(  \mathbb{T}%
\right)  $ with
\[
\left\Vert u_{0}\right\Vert _{H^{s}\left(  \mathbb{T}\right)  }\leq
\delta\quad\text{and}\quad\left\Vert u_{1}\right\Vert _{H^{s}\left(  \mathbb{T}\right)
}\leq\delta\text{,}%
\]
one can find a control function $h\in L^{2}\left(  \left[  0,T\right]
;H^{s}\left(  \mathbb{T}\right)  \right)  $ such that the system%
\begin{equation}
\pt u + (-1)^{j+1} \px^{2j+1}u + \frac12 \px(u^2)=Gh\text{, } \quad (t,x) \in \R \times \T,
\label{k5}
\end{equation}
where $G$ is defined by \eqref{k5a}, admits a solution $u\in C\left(  \left[  0,T\right]  ;H^{s}\left(
\mathbb{T}\right)  \right)  $ satisfying%
\[
\left.  u\right\vert _{t=0}=u_{0}\text{, \ }\left.  u\right\vert _{t=T}%
=u_{1}\text{.}%
\]
\end{theorem}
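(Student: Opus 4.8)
The plan is to follow the classical two-step scheme of \emph{linear exact controllability followed by a nonlinear fixed-point perturbation}, in the spirit of Russell--Zhang \cite{Russel96} and \cite{ZB2018}. Let $W(t)$ denote the unitary group generated by the linear part, acting on Fourier coefficients by $\widehat{W(t)f}(k)=e^{ik^{2j+1}t}\widehat f(k)$, so that the solution of the linear controlled problem $\pt u + (-1)^{j+1}\px^{2j+1}u = Gh$ is
\[u(t)=W(t)u_0+\int_0^t W(t-t')[Gh](t')\,dt'.\]
Reaching $u_1$ at time $T$ is then equivalent to solving $L_T h=u_1-W(T)u_0$ in $H^s$, where $L_Th:=\int_0^T W(T-t')[Gh](t')\,dt'$. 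I note at the outset that, since $\int_{\T}Gh\,dx=0$ by the normalization $2\pi[g]=1$ in \eqref{k5a}, the mean of $u$ is conserved; hence one reduces matters to the hyperplane $\int u_0=\int u_1$ (equivalently works in $H^s_0$ and treats the mean separately), which I will assume henceforth.

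The first step is to establish the \textbf{linear exact controllability}, i.e.\ surjectivity of $L_T\colon L^2([0,T];H^s)\to H^s$. Since $G$ is self-adjoint on $L^2(\T)$, it is natural to seek the control in HUM form $h=G\,W(T-\cdot)^*\phi$, reducing the problem to inverting the Gramian
\[\Lambda_T:=\int_0^T W(T-t)\,G^2\,W(T-t)^*\,dt\]
on $H^s$. Coercivity of $\Lambda_T$ is equivalent to the observability inequality $\norm{\phi}_{H^s}^2\lesssim\int_0^T\norm{G\,W(T-t)^*\phi}_{H^s}^2\,dt$, and this is where the explicit Fourier structure of $\T$ is decisive: the time-frequencies are $k^{2j+1}$, $k\in\Z$, whose consecutive gaps $|k^{2j+1}-(k+1)^{2j+1}|$ grow without bound, so an Ingham-type inequality (refined to exploit the growing gaps) delivers observability for \emph{any} $T>0$, once a unique continuation property guarantees that $g$ detects every mode. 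The latter follows from $g>0$ on $\omega$ together with the analyticity in $t$ of each Fourier component of $W(t)^*\phi$; a compactness--uniqueness argument then upgrades qualitative unique continuation to the quantitative bound. This furnishes a bounded right inverse $\Phi=G W(T-\cdot)^*\Lambda_T^{-1}$ of $L_T$.

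With the linear theory in hand, I would close the \textbf{nonlinear problem} by a contraction in the Bourgain space $X^{s,b}$ adapted to the operator, restricted to $[0,T]$. Absorbing the nonlinearity into the source term, define the map
\[\Gamma(w)(t)=W(t)u_0-\tfrac12\int_0^t W(t-t')\px\!\left(w(t')^2\right)dt'+\int_0^t W(t-t')[Gh_w](t')\,dt',\]
where the control is chosen as $h_w=\Phi\bigl(u_1-W(T)u_0+\tfrac12\int_0^T W(T-t')\px(w(t')^2)\,dt'\bigr)$, so that by construction $\Gamma(w)(T)=u_1$. A fixed point $w=\Gamma(w)$ is precisely a solution of \eqref{k5} meeting both endpoint conditions, from which $h=h_w$ is read off. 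Using the bilinear estimate $\norm{\px(w^2)}_{X^{s,b-1}}\lesssim\norm{w}_{X^{s,b}}^2$ from the local well-posedness theory, together with the boundedness of $\Phi$ and the group, $\Gamma$ maps the ball of radius $O(\delta)$ into itself and is a contraction there once $\delta$ is taken small enough.

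The hard part will be the \textbf{linear observability} step: one must verify, uniformly in the higher-order index $j$, both that the control $G$ genuinely sees every spatial mode (unique continuation) and that the growing spectral gaps of $\{k^{2j+1}\}$ produce a quantitative observability bound valid in arbitrary time $T>0$. By contrast, once this linear estimate and the companion bilinear estimate are secured, the nonlinear fixed point is routine, precisely because the smallness hypotheses $\norm{u_0}_{H^s},\norm{u_1}_{H^s}\le\delta$ render the quadratic term a genuine perturbation of the linear dynamics.
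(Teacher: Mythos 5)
Your proposal is correct, and its nonlinear half is essentially the paper's own argument: the paper likewise folds the Duhamel contribution of the nonlinearity into the linear target, setting $\omega(T,u)=\int_0^T W(T-\tau)(uu_x)(\tau)\,d\tau$, choosing $h=\Phi\left(u_0,u_1+\omega(T,u)\right)$ so that the endpoint condition holds by construction, and closing with a contraction (Lemma \ref{lem:CMP}) in a small ball once $\norm{u_0}_{H^s},\norm{u_1}_{H^s}\le\delta$; your reduction to the hyperplane $[u_0]=[u_1]$ is also the paper's standing hypothesis, inherited from Theorem \ref{thm:EC} though left implicit in the statement of the theorem itself. Where you genuinely diverge is the linear step. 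The paper proves linear exact controllability by the \emph{moment method}: by the gap condition (Lemma \ref{lem:gap}) and Ingham--Beurling, $\left\{e^{\lambda_k t}\right\}_{k\in\Z}$ is a Riesz sequence in $L^2(0,T)$ for every $T>0$, and the control is written down explicitly as $h(t,x)=\sum_j h_j q_j(t)(G\phi_j)(x)$ with $\{q_j\}$ the biorthogonal family and $h_k=\left(e^{-\lambda_k T}u_{1,k}-u_{0,k}\right)/\beta_k$, where $\beta_k=\norm{G\phi_k}_{L^2}^2$ is bounded below uniformly in $k\neq 0$ by a Riemann--Lebesgue argument. You instead invert the HUM Gramian via an observability inequality obtained from Ingham plus compactness--uniqueness. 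Both routes rest on the same two ingredients (growing spectral gaps, and the fact that $G$ sees every Fourier mode uniformly), but they buy different things: the paper's construction is explicit and immediately quantitative, yielding the bound \eqref{eq:phi} on $\norm{\Phi(u_0,u_1)}_{L^2(0,T;H^s)}$ with constants depending only on $T$, $\norm{g}_{H^s}$ and $\norm{g}_{H^{-s}}$ --- exactly what the fixed point consumes --- and it avoids any soft argument; your route is more flexible but its constant is non-explicit, and two points you pass over quickly require real care: (i) it is precisely the rank-one correction $-g(x)\int_\T g(y)h(t,y)\,dy$ in \eqref{k5a} that forces the compactness--uniqueness step (observability for plain multiplication by $g$ follows from Beurling applied pointwise in $x$ and then integrated against $g^2$, and the correction is the compact part), and (ii) coercivity of $\Lambda_T$ is naturally an $L^2$-duality statement, so for $s\neq 0$ you must set up the $H^s$/$H^{-s}$ pairing or conjugate by $\bra{\partial_x}^{s}$ before inverting. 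One further technicality in your fixed point: for periodic problems the exponent $b=\tfrac12$ is forced, and $X^{s,\frac12}$ does not embed into $C_tH^s$, so the contraction must be run in the strengthened space $Y^s_T$ of Lemma \ref{lem:Xsb} (which also carries the correct form of the bilinear estimate), as the paper does, rather than in $X^{s,b}$ itself.
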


Now, thanks to the advantage of the results proved in \cite{Liu,Slemrod}, the local exponential result in $H^s(\mathbb{T})$, for any $s\geq0$, can be established.

\begin{theorem}[\cite{ZB2018}]
\label{stability_zhang}Let $s\geq0$ and $\lambda>0$ be given. There exists a
bounded linear operator $$K_{\lambda}:H^{s}\left(  \mathbb{T}\right)
\rightarrow H^{s}\left(  \mathbb{T}\right) $$ such that if one chooses the
feedback control $h=K_{\lambda}u$ in \eqref{k5}, then the resulting closed-loop system%
\begin{equation}
\begin{cases}
\pt u + (-1)^{j+1} \px^{2j+1}u + \frac12 \px(u^2)=GK_{\lambda}u, \\
u(0,x)  =u_0(x),
\end{cases}
\qquad (t,x) \in \R \times \T,  \label{k6}%
\end{equation}
is locally exponentially stable in the space $H^{s}\left(  \mathbb{T}\right)
$, for $s\geq0$, that is, there exists a $\delta>0$ such that for any $u_{0}\in H^{s}\left(
\mathbb{T}\right)  $ with $
\left\Vert u_{0}\right\Vert _{H^{s}\left(  \mathbb{T}\right)  }\leq
\delta\text{,}$
the corresponding solution $u$ of \eqref{k6} satisfies%
\[
\left\Vert u\left(  \cdot,t\right)  -\left[  u_{0}\right]  \right\Vert
_{H^{s}\left(  \mathbb{T}\right)  }<Ce^{-\lambda t}\left\Vert u_{0}-\left[
u_{0}\right]  \right\Vert _{H^{s}\left(  \mathbb{T}\right)  }\text{,}\quad \forall t>0.
\]
\end{theorem}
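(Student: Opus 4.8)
The plan is to deduce the local exponential stabilization from the local exact controllability of Theorem~\ref{control_zhang} by means of the abstract principle of Slemrod~\cite{Slemrod} and Liu~\cite{Liu}: once a control system is locally exactly controllable at an equilibrium in some time $T>0$, one can manufacture a bounded \emph{linear} feedback law whose closed-loop flow is locally exponentially stable with an arbitrarily prescribed rate $\la>0$. The construction proceeds in three stages: a reduction to the mean-zero subspace, the Slemrod feedback for the linear flow, and a Liu-type perturbative closing of the nonlinearity.

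First I would reduce to the mean-zero subspace. Since $\int_{\T}G[\,\cdot\,]\,dx=0$ by the definition \eqref{k5a} and $\frac12\px(u^2)$ is a perfect derivative, the mean $[u(t)]=\frac{1}{2\pi}\int_{\T}u\,dx$ is conserved along \eqref{k6}, so that $[u(t)]\equiv[u_0]$. Writing $u=[u_0]+w$ with $[w]=0$ and interpreting the feedback as $K_\la(u-[u_0])=K_\la w$, the function $w$ solves the closed-loop equation on $H_0^s(\T)$, up to the extra skew-adjoint drift $[u_0]\px w$ produced by the nonlinearity; since $\abs{[u_0]}\lesssim\norm{u_0}_{H^s}\le\del$, this drift is harmless and may be carried along as a small perturbation. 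The claimed inequality is precisely the exponential decay of $\norm{w(t)}_{H^s}$.

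Next I would carry out the linear stabilization. Let $S(t)$ be the unitary group generated on $H_0^s(\T)$ by the skew-adjoint operator $A:=-(-1)^{j+1}\px^{2j+1}$, and view $G$ as a bounded, $L^2$-self-adjoint control operator. The linear counterpart of Theorem~\ref{control_zhang} asserts that $(A,G)$ is exactly controllable in time $T$, equivalently that the controllability Gramian $L_T:=\int_0^T S(T-s)GG^*S^*(T-s)\,ds$ is coercive on $H_0^s(\T)$. To reach an arbitrary rate $\la>0$ I would apply this to the shifted generator $A+\la I$, for which exact controllability persists, whose weighted Gramian
\[
L_{T,\la}:=\int_0^T e^{2\la(T-s)}\,S(T-s)\,GG^*\,S^*(T-s)\,ds
\]
is again coercive and hence boundedly invertible, and set $K_\la:=-G^*L_{T,\la}^{-1}$. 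A Lyapunov computation with $V(w)=(L_{T,\la}^{-1}w,w)$ then yields $\norm{S_{K_\la}(t)}_{\MC{L}(H_0^s)}\le Ce^{-\la t}$, where $S_{K_\la}(t)$ is the closed-loop semigroup generated by $A+GK_\la$.

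Finally, following Liu, I would close the nonlinear problem by perturbing this stable linear flow. In Duhamel form,
\[
w(t)=S_{K_\la}(t)w_0-\int_0^t S_{K_\la}(t-s)\Big(\tfrac12\px\big(w(s)^2\big)+[u_0]\px w(s)\Big)\,ds,
\]
and combining the exponential bound on $S_{K_\la}$ with the bilinear $X^{s,b}$ estimates available for \eqref{eq:kdv}, a contraction argument in the space of functions weighted by $e^{\la t}$ shows that, for $\norm{u_0}_{H^s}\le\del$ with $\del$ small, the solution is global and obeys $\norm{w(t)}_{H^s}\le Ce^{-\la t}\norm{w_0}_{H^s}$, which is the asserted decay toward $[u_0]$. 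The delicate point is exactly this last stage: the exponential decay of the linear flow must survive the iteration, which demands that the bilinear estimates hold uniformly in the exponentially weighted norm and that the quadratic gain be controlled so that smallness propagates on the whole half-line $t>0$. At the local level the smallness of $\del$ keeps this perturbation mild, so the genuine analytic difficulty—the propagation of compactness and regularity underlying the \emph{global} statements—is deferred to the later sections.
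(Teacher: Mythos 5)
Your overall skeleton is the same as the paper's: the feedback is the Slemrod--Liu Gramian construction (your $K_\la=-G^{*}L_{T,\la}^{-1}$ is, up to the sign/notation conventions and the length of the averaging window, exactly the paper's $h=-G^{*}L_{\la}^{-1}u$ built from $L_{\la}f=\int_0^1 e^{-2\la\tau}W(-\tau)GG^{*}W^{*}(-\tau)f\,d\tau$, whose exponential stability is Proposition \ref{zhang_2} / Theorem \ref{th:stabilizability}), and the nonlinearity is then closed perturbatively via Duhamel with the closed-loop propagator and the bilinear estimate of Lemma \ref{lem:Xsb}(4). Two small points where you are actually more careful than the paper: you make the mean-value reduction $u=[u_0]+w$ explicit, and you notice the resulting drift $[u_0]\px w$. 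Note, however, that this drift is an \emph{unbounded} perturbation, so "small bounded perturbation" reasoning does not apply; the correct way to absorb it is through the same bilinear estimate, writing $[u_0]\px w=\px([u_0]w)$ and using $\norm{[u_0]}_{Y^s_T}\sim\abs{[u_0]}\le\delta$, which works but should be said.

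The substantive deviation, and the weak point, is your last stage. The paper does \emph{not} run a contraction in an $e^{\la t}$-weighted space on the half-line; it runs a contraction in $Y_T^s$ on a single finite window $[0,T]$, with $T$ chosen so that $2Ce^{-\la T}\le e^{-\la' T}$ for a slightly smaller rate $\la'<\la$, concludes $\norm{u(T)}_{H^s}\le e^{-\la' T}\norm{u_0}_{H^s}$, and then iterates over the windows $[nT,(n+1)T]$ by induction, filling in intermediate times with the semigroup property. This structure is forced by the tools: the bilinear estimate and the closed-loop linear estimates (Lemma \ref{lem:modified Xsb}) are statements about $Y^s_I$ norms on finite intervals, with constants depending on $\abs{I}$, and Bourgain-type norms do not interact in any direct way with a pointwise-in-time weight $e^{\la t}$. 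Your proposal names this difficulty ("the bilinear estimates must hold uniformly in the exponentially weighted norm") but does not resolve it, and as stated it does not go through: any rigorous version of your weighted norm ends up being something like $\sup_n e^{\la' nT}\norm{u}_{Y^s_{[nT,(n+1)T]}}$, at which point your argument \emph{is} the paper's stepwise induction. So either adopt that interval-by-interval bookkeeping (accepting the rate $\la'<\la$ on each step, which is harmless since $\la$ is arbitrary), or supply a genuinely new estimate showing the $Y^s$ bilinear bound survives exponential weighting; without one of these, the final step is a gap.
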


\begin{remark}
We point out that Theorems \ref{control_zhang} and \ref{stability_zhang} have already been proved by Zhao and Bai \cite{ZB2018}. For self-containedness, we will also give rigorous proofs of them in Appendices \ref{sec3}  and \ref{sec5}.
\end{remark}

These results shown that one can always find an appropriate control input $h$ to guide the system \eqref{k5} from a given initial state $u_{0}$ to a given terminal state
$u_{1}$ as long as \textit{their amplitudes are small} and $\left[
u_{0}\right]  =\left[  u_{1}\right]  $. However, some natural questions arise.

\vspace{0.2cm}

\noindent\textbf{Question $\mathcal{A}$}. Can one still guide the system   \eqref{k5} by choosing an appropriate control input $h$ (defined on a sufficiently long time interval)
from a given initial state $u_{0}$ to a given terminal state $u_{1}$ when
$u_{0}$ or $u_{1}$ have large amplitude?

\vspace{0.2cm}

According to Theorem \ref{stability_zhang}, solutions of system (\ref{k5})
issued from initial data close to their mean values converge at a uniform
exponential rate to their mean values in the space $H^{s}\left(
\mathbb{T}\right)  $ as $t\rightarrow+\infty$. One may ask the following issue:

\vspace{0.2cm}

\noindent\textbf{Question $\mathcal{B}$.} Does any solution of the closed-loop system
\eqref{k6} converge exponentially to its mean value as $t\rightarrow+\infty$?

\vspace{0.2cm}

Thus, additionally to the local results, presented in Theorems \ref{control_zhang}  and \ref{stability_zhang}, our work gives a positive answer to these questions that have a global character. This is possible thanks to the celebrated results obtained by Bourgain \cite{Bourgain1993-1}.  One of the main results in this work gives an answer to the Question $\mathcal{A}$, the result ensures that the system \eqref{k5} is \textit{globally} exactly controllable.

\begin{theorem}\label{control_global}
Let $s\geq0$, $R>0$ and $\mu\in\mathbb{R}$ be given.
There exists a time $T>0$ such that if $u_{0}$, $u_{1}\in H^{s}\left(
\mathbb{T}\right)  $, with $\left[  u_{0}\right]  =\left[  u_{1}\right]  =\mu$,
satisfies
\[
\left\Vert u_{0}\right\Vert _{H^{s}\left(  \mathbb{T}\right)  }\leq R\text{,
\ \ }\left\Vert u_{1}\right\Vert _{H^{s}\left(  \mathbb{T}\right)  }\leq
R\text{,}%
\]
then one can find a control input $h\in L^{2}\left(  0,T;H^{s}\left(
\mathbb{T}\right)  \right)  $ such that the system (\ref{k5}) admits a
solution $u\in C\left(  \left[  0,T\right]  ,H^{s}\left(  \mathbb{T}\right)
\right)  $ satisfying%
\[
\left.  u\right\vert _{t=0}=u_{0}\text{, \ }\left.  u\right\vert _{t=T}%
=u_{1}\text{.}%
\]
\end{theorem}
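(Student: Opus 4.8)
The plan is to reduce the global statement to the local exact controllability of Theorem~\ref{control_zhang} by first ``contracting'' both the initial and the terminal data down to a small amplitude, and then connecting the two small states. Two ingredients make this reduction possible: the conservation of mass—so that $[u]=\mu$ along every admissible trajectory—and the time-reversibility of \eqref{k5}, since under $(t,x)\mapsto(-t,-x)$ the operator $(-1)^{j+1}\px^{2j+1}$ and the nonlinearity $\frac12\px(u^2)$ both change sign, so that a trajectory running forward from $u_0$ corresponds to a trajectory of the same equation running backward to $u_1$ (the spatial reflection being an isometry of $H^s(\T)$). Thus it suffices to be able to steer an arbitrary state in the ball of radius $R$ into an arbitrarily small $H^s$-neighborhood of the constant $\mu$ in a time depending only on $R$.

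That steering is provided by a \emph{semiglobal} exponential stabilization statement, which I would establish first: for every $R>0$ there exist $C,\gam>0$ such that the closed-loop flow $h=K_\la u$ of \eqref{k6} obeys $\norm{u(t)-[u_0]}_{H^s}\le Ce^{-\gam t}\norm{u_0-[u_0]}_{H^s}$ for all $t\ge 0$ whenever $\norm{u_0}_{H^s}\le R$. This upgrades Theorem~\ref{stability_zhang}, valid only for small data, to a decay that is uniform over the $R$-ball. Writing $u=\mu+v$ with $[v]=0$, the equation for $v$ picks up a drift term $2\mu\px v$, which is precisely of the lower-order type $\alpha_0\px v$ covered by the generalized equation of the footnote to \eqref{eq:kdv}; hence the local analysis and the Bourgain-space estimates transfer verbatim to the shifted problem.

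With the semiglobal stabilization in hand, the proof assembles in three phases on $[0,T]$ with $T=2T_0+\tau$. On $[0,T_0]$ I run the closed-loop flow forward from $u_0$; choosing $T_0=T_0(R)$ large enough, the exponential decay brings the solution to a state $\wt u_0$ with $\norm{\wt u_0-\mu}_{H^s}\le\del$, where $\del$ is the local-controllability radius of Theorem~\ref{control_zhang} for the mean-$\mu$ problem. By reversibility the same decay, read backward, produces a trajectory on $[T_0+\tau,T]$ terminating at $u_1$ and originating at a state $\wt u_1$ with $\norm{\wt u_1-\mu}_{H^s}\le\del$. Finally, on the middle interval $[T_0,T_0+\tau]$ both endpoints lie within $\del$ of $\mu$, so Theorem~\ref{control_zhang} furnishes a control $h\in L^2(T_0,T_0+\tau;H^s)$ joining them. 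Concatenating the three controls yields $h\in L^2(0,T;H^s)$ and, by persistence of $H^s$-regularity for the flow, a solution $u\in C([0,T];H^s(\T))$ with $u|_{t=0}=u_0$ and $u|_{t=T}=u_1$.

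The genuine difficulty is concentrated in the semiglobal stabilization, that is, in a \emph{global observability inequality} of the form $\norm{u_0-[u_0]}_{H^s}^2\le C\int_0^{T}\norm{G^{*}u(t)}_{H^s}^2\,dt$ holding uniformly on bounded sets. I would prove it by a compactness--uniqueness argument: if it fails, one extracts a normalized sequence of solutions whose observations tend to zero. To pass to the limit one needs the \emph{propagation of compactness} in Bourgain spaces to obtain strong convergence, and the \emph{propagation of regularity} (the propagation-of-singularities results announced in the abstract) to show that the weak limit is smooth; the Strichartz estimates for the general operator $(-1)^{j+1}\px^{2j+1}$ are what render the nonlinear terms amenable to these propagation arguments. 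A unique-continuation property then forces the limit to be the constant $[u_0]$, contradicting the normalization. This observability/propagation step—rather than the three-phase concatenation—is where essentially all the new work for the higher-order operator lies.
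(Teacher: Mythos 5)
Your proposal is correct and follows essentially the same route as the paper: the paper likewise proves Theorem~\ref{control_global} by running the damped flow ($h=-G^{\ast}u$, Theorem~\ref{exponential_global}) forward from $u_{0}$ and backward in time from $u_{1}$ (time reversibility), then bridging the two resulting states near $\mu$ with the local result (Theorem~\ref{control_zhang}), the stabilization itself being obtained from an observability inequality via compactness--uniqueness, propagation of compactness and regularity, and unique continuation. The only cosmetic deviations are your choice of feedback $K_{\lambda}u$ instead of $GG^{\ast}u$, your plan to prove observability directly in $H^{s}$ where the paper proves it in $L^{2}$ and then bootstraps to $H^{s}$ via the equation satisfied by $u_{t}$ and interpolation, and the harmless slip that the drift term after writing $u=\mu+v$ is $\mu\px v$, not $2\mu\px v$.
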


As for Question $\mathcal{B}$, we have the following affirmative answer.

\begin{theorem}\label{exponential_global}
Let $s\geq0$ and $\mu\in\mathbb{R}$. There exists a
constant $\gamma>0$ such that for any $u_{0}\in H^{s}\left(  \mathbb{T}%
\right)  $ with $\left[  u_{0}\right]  =\mu$, the corresponding solution $u$
of the system \eqref{k5}, with $h\left(  x,t\right)  =-G^{\ast}u\left(
x,t\right)  $, satisfies%
\[
\left\Vert u\left(  \cdot,t\right)  -\left[  u_{0}\right]  \right\Vert
_{H^{s}\left(  \mathbb{T}\right)  }<\alpha_{s,\mu}\left(  \left\Vert
u_{0}-\left[  u_{0}\right]  \right\Vert _{L^{2}\left(  \mathbb{T}\right)
}\right)  Ce^{-\gamma t}\left\Vert u_{0}-\left[  u_{0}\right]  \right\Vert
_{H^{s}\left(  \mathbb{T}\right)  }\text{, }\forall t\geq0\text{,}%
\]
where $\alpha_{s,\mu}:\mathbb{R}^{+}\rightarrow\mathbb{R}^{+}$ is a
nondecreasing continuous function depending on $s$ and $\mu$.
\end{theorem}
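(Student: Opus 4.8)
The plan is to deduce the global exponential stabilization from the local result (Theorem \ref{stability_zhang}) together with a global $L^2$-stabilization estimate, and then to upgrade the decay from $L^2$ to $H^s$ using the propagation of regularity in Bourgain spaces and the Strichartz estimates. Throughout I set $v := u - \mu$. Since the mass is conserved and the control operator $G$ annihilates constants (because $\int_{\T} g = 1$), $v$ has mean zero and solves the same higher-order KdV-type equation with the harmless extra transport term $\mu\,\px v$ and feedback $f = -G^2 v$ (here $G = G^{\ast}$ on real $L^2(\T)$, since $h = -G^{\ast}u$ and $Gu = Gv$). Hence it suffices to show that $\|v(t)\|_{H^s}$ decays like $\alpha_{s,\mu}(\|v_0\|_{L^2})\, C e^{-\gamma t}\|v_0\|_{H^s}$.

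First I would record the dissipation identity. Because the odd-order dispersive term, the transport term, and the Burgers nonlinearity all conserve the $L^2$ norm, and $f = -G^2 v$ with $G$ self-adjoint, one obtains
\begin{equation*}
\frac{d}{dt}\|v(t)\|_{L^2}^2 = -2\|G v(t)\|_{L^2}^2 \le 0,
\end{equation*}
so the $L^2$ norm is non-increasing and $\int_0^T \|Gv\|_{L^2}^2\,dt = \tfrac12\bigl(\|v(0)\|_{L^2}^2 - \|v(T)\|_{L^2}^2\bigr)$.

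The core step is an observability inequality: for every $R>0$ there exist $T>0$ and $C=C(R)>0$ such that every mean-zero solution with $\|v_0\|_{L^2}\le R$ satisfies $\|v(0)\|_{L^2}^2 \le C\int_0^T \|G v(t)\|_{L^2}^2\,dt$. Combined with the dissipation identity this gives $\|v(T)\|_{L^2}^2 \le (1 - 2/C)\|v(0)\|_{L^2}^2$, and iterating over consecutive time windows (the system is autonomous and the $L^2$ ball is invariant) produces an $L^2$ exponential decay. I would prove the observability by contradiction and compactness: if it fails there is a sequence of solutions $v_n$, bounded in $L^2$ after normalization, with $\int_0^T \|G v_n\|_{L^2}^2 \to 0$. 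Using the propagation of compactness in the Bourgain spaces $X^{s,b}$ — the main technical novelty announced in the introduction — together with the Strichartz estimates to control $\px(v_n^2)$, I would upgrade the weak limit to a strong $L^2$ limit $w$, a solution of the limit equation with $Gw\equiv 0$, i.e. $w\equiv 0$ on $\omega\times(0,T)$. The propagation of regularity together with a unique continuation property for the higher-order operator then forces $w$ to be constant, hence $w\equiv 0$ by the mean-zero constraint, contradicting $\|w(0)\|_{L^2}=1$. The dependence $C=C(R)$ is harmless: once $\|v(t)\|_{L^2}$ falls below a fixed threshold one uses the observability constant at that threshold to obtain a rate $\gamma$ independent of $R$, while the size-dependent time needed to reach the threshold is absorbed into the prefactor.

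Finally, I would transfer the decay to $H^s$ for $s>0$. Once the $L^2$ norm has become small, the local exponential stabilization of Theorem \ref{stability_zhang} applies and gives $H^s$ decay at the uniform rate $\gamma$; on the preceding finite interval the $H^s$ norm is merely controlled, via the global well-posedness bounds and the Duhamel/Bourgain-space estimates for $\px(v^2)$, by a constant depending only on $\|v_0\|_{L^2}$, which furnishes the nondecreasing continuous function $\alpha_{s,\mu}$. Splicing the two regimes yields the stated estimate. The hardest part will be the observability step: making the weak-to-strong compactness argument rigorous requires the propagation of compactness and regularity in $X^{s,b}$ for the general dispersion $(-1)^{j+1}\px^{2j+1}$, and a unique continuation property valid for all $j\in\N$ — both of which go beyond the KdV ($j=1$) and Kawahara ($j=2$) cases and constitute the heart of the paper.
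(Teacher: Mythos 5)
Your $s=0$ argument follows the paper's route (dissipation identity, observability by contradiction and compactness, propagation of compactness via Proposition \ref{prop_a}, unique continuation), and your handling of the uniformity of the rate $\gamma$ is fine. The genuine gap is in the final step, the upgrade from $L^2$ to $H^s$ decay. You propose to wait until $\|u(t)\|_{L^2}$ is small and then invoke Theorem \ref{stability_zhang}; but that theorem requires smallness of the data in $H^s$, i.e. $\|u(t^*)-[u_0]\|_{H^s}\le\delta$, which is precisely what you do not have: on the transient interval the global well-posedness bound (Theorem \ref{thm:GWP}) only yields $\|u(t^*)\|_{H^s}\le \alpha(\|u_0\|_{L^2})\,\|u_0\|_{H^s}$, proportional to the arbitrarily large $H^s$ norm of the data, and the dispersive flow has no mechanism forcing the $H^s$ norm to become small just because the $L^2$ norm did. (There is also a feedback mismatch: Theorem \ref{stability_zhang} concerns the closed loop with $GK_\lambda u$, $\lambda>0$, not the loop $-GG^{\ast}u$ at hand, so at best you would have to re-prove a local result for this specific loop.) The paper circumvents this with an idea your proposal lacks: differentiate the equation in time. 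The function $v=u_t$ solves a system that is \emph{linear} in $v$ with potential $a=u$, namely $\partial_t v+(-1)^{j+1}\px^{2j+1}v+\px(uv)=-K_0v$, and Lemma \ref{a_function} gives exponential decay for such linearized systems under a smallness hypothesis on the \emph{potential} (in $Y^0$ on time windows), with no smallness required of $v_0$; the already-established $L^2$ decay of $u$ supplies exactly that smallness after a transient time. Decay of $\|u(t)\|_{H^{2j+1}}$ is then recovered algebraically from $(-1)^{j+1}\px^{2j+1}u=-K_0u-uu_x-v$, and general $s\ge 0$ follows by interpolation and iteration. Without this linearization-in-time device, or a substitute for it, your splicing argument does not close.

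Two further points in your observability step need repair, though both are fixable along the paper's lines. First, the contradiction argument must split into cases according to whether $\xi_n:=\|u_n(0)\|_{L^2}$ stays away from zero or tends to zero. In the latter case the normalized solutions $v_n=u_n/\xi_n$ satisfy an equation whose nonlinearity $\tfrac{\xi_n}{2}\px(v_n^2)$ vanishes in the limit, so the limit solves the \emph{linear} equation; the nonlinear unique continuation property (Corollary \ref{UCP}, which rests on the smoothing Lemma \ref{unique_a} for the nonlinear flow) is not applicable there, and the paper instead invokes Holmgren's uniqueness theorem. Second, $Gw\equiv0$ does not mean $w\equiv0$ on $(0,T)\times\omega$; it means $w(t,x)=\int_\T g(y)w(t,y)\,dy=:c(t)$ on $\omega$, so one must first deduce $c'(t)=0$ from the equation before applying unique continuation, and only then does the zero-mean condition give $w\equiv0$. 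Relatedly, the compactness machinery (and Proposition \ref{prop_a}) needs uniform bounds in $X_T^{0,\frac12}$, furnished by Theorem \ref{thm:GWP}, not merely boundedness in $L^2$ as your sketch states.
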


\subsection{Heuristic of the article} In this manuscript our goal is to give answers for two global control problems mentioned in the previous section. Observe that the results obtained so far are concentrated in a single KdV equation \eqref{I6}, see e.g. \cite{RusselZhang1993,Russel96}, and Kawahara equation \eqref{kawahara}, see for instance \cite{zhang,zhang1}. Moreover, even higher-order KdV type \eqref{eq:kdv} has been studied in the sense of local controllability and stabilization in \cite{ZB2018}, while global control results for the generalized higher-order KdV type equation \eqref{eq:kdv} are still open, so, under this direction, our work is a generalization of the previous result for KdV and Kawahara equations. Let us describe briefly the main arguments of the proof of our theorem and give consideration of the importance of the work in the study of the control theory for general dispersive operators.

The first two results are local, that is, Theorems \ref{control_zhang}  and \ref{stability_zhang}.  In fact, first, thanks to the properties of the operator 
\begin{equation}\label{eq:A}
Aw=-(-1)^{j+1}\partial^{2j+1}_x w,
\end{equation}
we can use classical theorems of Ingham and Beurling \cite{Ing1936,Beu} to ensures that the linear system associated to \eqref{k5} have control and stabilization properties. To extend these results for the nonlinear case, one has to control one derivative in the nonlinear term. However, it is well-known that linear solutions have no dispersive and no smoothing effect under periodic boundary condition. Thanks to Bourgain \cite{Bourgain1993-1, Bourgain1993-2}, by regarding the linear estimate as multilinear interactions in $L^2$, now, one can recover derivative loss occurring in Sobolev inequality, thus the nonlinearity can be controlled. Note that this is not the only way to handle the nonlinearity, compare \cite{Hirayama} with \cite{HongKwak}. Here, the main point is to prove the following \textit{Strichartz estimates} 
$$ \norm{f}_{L^4(\R \times \T)} \lesssim \norm{f}_{X^{0,b}}, \quad b > \frac{j+1}{2(2j+1)},\quad \text{for all} \;j\in \mathbb{N}.  $$ 
After that we are able to extend the local solutions for the global one and prove the nonlinear (local) control results as a perturbation of the linear one. Note that the arguments here are purely linear. In addition, we emphasize that the exact controllability and stabilizability results of the linear system associated to \eqref{k5} are valid in $H^{s}(\mathbb{T})$ for any $s \in \mathbb{R}$.

It is important to point out that Theorems \ref{control_global} and \ref{exponential_global} have global character. Precisely, the control result for large data (Theorem \ref{control_global}) will be a combination of a global stabilization result (Theorem \ref{exponential_global}) and the local control result (Theorem \ref{control_zhang}). Indeed, given the initial state $u_{0} \in H^{s}(\mathbb{T})$ to be controlled, by means of the damping term $Ku =-GG^{\ast}u$ supported in $\omega\subset\mathbb{T}$, i.e., solving the IVP \eqref{k5} with $h=-G^{\ast}u$, we drive $u_{0}$ to a state $\tilde{u}_{0}$ close enough to the mean value $\mu$ in a sufficiently large time $T_{1}$, by Theorem \ref{exponential_global}. Again, using this result, we do the same with the final state $u_{1} \in H^{s}(\mathbb{T})$  by solving the system backwards in time, thanks to the time reversibility of the higher-order KdV-type equation. This process produces two states $\tilde{u}_{0}$ and $\tilde{u}_{1}$ which are close enough to $\mu$ so that the local controllability result (Theorem \ref{control_zhang}) can be applied around the state $u(x)=\mu$. We can see this mechanism illustrated in the Figure \ref{Globalcontrol}.

\begin{figure}[h!]
		\centering
		\includegraphics[width=0.74\linewidth]{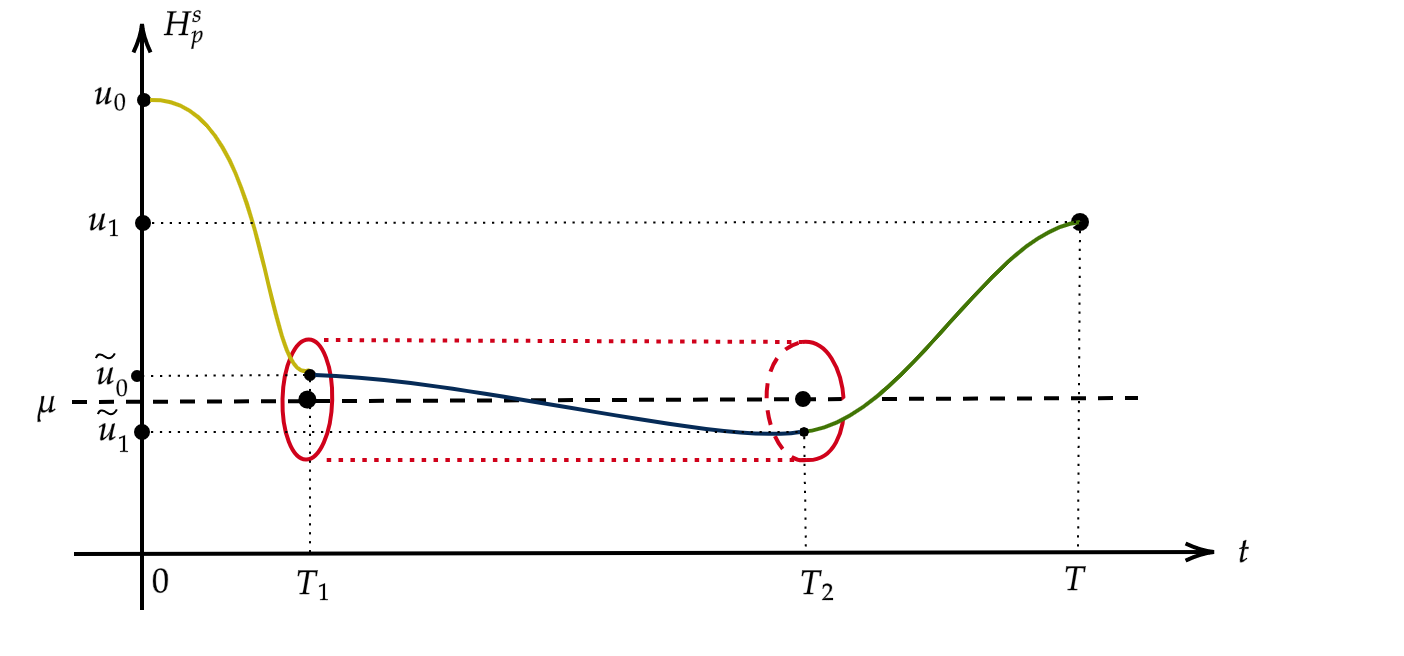}
		\caption{The constructive approach of the proof of the Theorem \ref{control_global}.}
		\label{Globalcontrol}
	\end{figure}

Lastly, the proof of the Theorem \ref{exponential_global} is equivalent to prove an o\textit{observability inequality}, which one, by using contradiction arguments, relies on to prove a unique continuation property for the system \eqref{k5}. This property is achieved thanks to the propagation results using again smooth properties of solution in Bourgain spaces. The main difficulties to prove the propagation results arises from the fact that the system \eqref{k5} has a general structure. To overcome this difficulty the Strichartz estimates, for the solution of our problem in Bourgain Spaces, are essential.

We finish this introduction by mentioning that the global results presented in this article, even the local results, are not a consequence of the previous results for the KdV and Kawahara equations. Indeed, taking $j=1$ and $2$ in our operator $A$, defined in \eqref{eq:A}, we can recover the previous results in the literature for these equations,  nevertheless, the necessary estimates to treat the operator $ A $ as well as the propagation of singularities are the main novelties of this work. In summary, the key ingredients of this work are:
\begin{itemize}
\item[$1.$] \textit{Strichartz estimates} associated to the solution of the problem under consideration;
\item[$2.$] \textit{Microlocal analysis} to prove propagation of the regularity and compactness;
\item[$3.$] \textit{Unique continuation property} for the operator $A$.
\end{itemize} 

\subsection{Structure of the paper} Some preliminaries are given in Section \ref{sec:pre}, particularly, spectral property of the operator $A$ in \eqref{eq:A} is studied and Bourgain spaces are introduced. In Section \ref{sec:L^4}, we give a rigorous proof of Strichartz estimate. In Section \ref{Apendice}, we investigate propagation of singularities and unique continuation property. The global stabilization result (Theorem \ref{exponential_global}) is proved in Section \ref{sec6}, and in Section \ref{sec7}, some comments and open questions are presented. In Appendices, as mentioned, some analyses for local results are given. The linear system is studied in Appendix \ref{sec3}, particularly, we present the linear control problems, which are a consequence of the spectral analysis. A brief proof of the global well-posedness of the closed loop system is presented in Appendix \ref{sec4}. Finally the proofs of Theorems \ref{control_zhang} and \ref{stability_zhang} are presented in Appendix \ref{sec5}.

\section{Preliminaries}\label{sec:pre}
\subsection{Spectral property}

Consider the operator $A$ denoted by
\begin{equation}\label{eq:operator}
Aw=-(-1)^{j+1}\partial^{2j+1}_x w
\end{equation}
with domain $\mathcal{D}\left(  A\right)  =H^{2j+1}\left(  \mathbb{T}\right)
$. The operator  $A$ generates a continuous unitary operator group $W(t)$ on the space $L^{2}%
(\mathbb{T})$, precisely,
\[W(t)f(x) = \frac{1}{2\pi}\sum_{n \in \Z} e^{ikx} e^{itk^{2j+1}} \widehat{f}(k).\]
Remark that
\begin{equation}\label{eq:AW}
A^* = -A \quad \mbox{and} \quad W^*(-t) = W(t).
\end{equation}
One immediately knows that eigenfunctions of the operator $A$ are the orthonormal Fourier bases of $L^{2}(\mathbb{T})$,%
\[
\phi_{k}\left(  x\right)  =\frac{1}{\sqrt{2\pi}}e^{ikx}, \quad k \in \Z,
\]
and its corresponding eigenvalues are
\begin{equation}\label{eq:EV}
\lambda_{k}=i k^{2j+1}\text{, } \quad \quad k \in \Z.
\end{equation}

We now prove a \textit{gap condition} which will be used to prove a local controllability result in Section \ref{sec3}. The result can be read as follows.
\begin{lemma}[Gap condition]\label{lem:gap}
Let $j \ge 1$ and $k \in \Z$. For $\lambda_k$ defined as in \eqref{eq:EV}, if $|k| \ge j +1$, we have
\begin{equation}\label{eq:gap2-1}
|\lambda_{k+1} - \lambda_k| \ge k^2,
\end{equation}
which implies
\begin{equation}\label{eq:gap2-2}
\lim_{|k| \rightarrow +\infty} |\lambda_{k+1} - \lambda_k| =\infty.
\end{equation}
\end{lemma}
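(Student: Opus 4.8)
The plan is to reduce the statement to an elementary polynomial inequality in the integer $k$. Since $\lambda_k = i k^{2j+1}$ is purely imaginary, one has $|\lambda_{k+1}-\lambda_k| = |(k+1)^{2j+1} - k^{2j+1}|$, and the binomial theorem gives
\[
(k+1)^{2j+1} - k^{2j+1} = \sum_{i=0}^{2j}\binom{2j+1}{i}k^i =: P(k),
\]
a polynomial of degree $2j$ with nonnegative coefficients. The goal \eqref{eq:gap2-1} then becomes $|P(k)| \ge k^2$. The first thing I would record is a reflection symmetry: because the exponent $2j+1$ is odd,
\[
P(-1-k) = (-k)^{2j+1} - \bigl(-(k+1)\bigr)^{2j+1} = (k+1)^{2j+1} - k^{2j+1} = P(k),
\]
so $P$ is symmetric about $k=-\tfrac12$. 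This reduces everything to nonnegative arguments.

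For $k \ge 0$ all terms of $P(k)$ are nonnegative, so retaining just the quadratic term yields $P(k) \ge \binom{2j+1}{2}k^2 = j(2j+1)k^2 \ge k^2$, which already settles the case $k\ge 0$ (in particular every $k\ge j+1$). For $k=-m$ with $m\ge 1$, the symmetry gives $|P(-m)| = P(m-1) = m^{2j+1}-(m-1)^{2j+1}$, and I must show $P(m-1)\ge m^2$. Writing $a=m-1\ge 0$, this is $P(a)\ge (a+1)^2$.

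The hard part is precisely this negative-index case, and it is the only place that needs care. The target $k^2 = m^2$ is \emph{strictly larger} than the value $(m-1)^2 = a^2$ attached to the mirror index, so a single leading term such as $(2j+1)a^{2j}$ or $\binom{2j+1}{2}a^2$ is too lossy at $a=1$ (i.e.\ $m=2$). The fix is to keep the three lowest-order terms,
\[
P(a) \ge 1 + (2j+1)a + \binom{2j+1}{2}a^2,
\]
and compare with $(a+1)^2 = 1 + 2a + a^2$; the difference equals $(2j-1)a + \bigl(j(2j+1)-1\bigr)a^2$, whose coefficients are positive for $j\ge 1$, hence the difference is $\ge 0$ for all $a\ge 0$. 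This closes the case $k\le -(j+1)$ (indeed all $m\ge 1$), so that \eqref{eq:gap2-1} holds for every $k$ in the claimed range — in fact for all $k\in\Z$, the hypothesis $|k|\ge j+1$ being more than enough. Finally, \eqref{eq:gap2-2} is immediate from \eqref{eq:gap2-1}, since $|\lambda_{k+1}-\lambda_k|\ge k^2\to\infty$ as $|k|\to\infty$.
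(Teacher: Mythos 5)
Your proof is correct, and it takes a genuinely different route from the paper's. The paper expands $-i(\lambda_{k+1}-\lambda_k)=(k+1)^{2j+1}-k^{2j+1}$ and groups the binomial sum into quadratic pairs $\bigl(\binom{2j+1}{2m-1}k^2+\binom{2j+1}{2m}k\bigr)k^{2(j-m)}$, $m=1,\dots,j$; each pair is shown to be positive only when $k>0$ or $k<-\frac{j-m+1}{m}$, and this is precisely where the hypothesis $|k|\ge j+1$ enters (the case $j=1$ is handled separately there, via the identity $3k^2+3k+1=k^2+\tfrac32(k+1)^2+\tfrac12(k^2-1)$). You instead exploit the reflection symmetry $P(-1-k)=P(k)$ of $P(k)=(k+1)^{2j+1}-k^{2j+1}$, available because the exponent is odd, to reduce everything to nonnegative arguments, where positivity of the binomial coefficients does all the work: keep the $\binom{2j+1}{2}k^2$ term for $k\ge 0$, and keep the three lowest-order terms to beat $(a+1)^2$ in the mirrored negative case (your observation that a single term is too lossy at $a\in\{0,1\}$ is the one genuinely delicate point, and you handle it correctly). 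What your route buys: a single argument uniform in $j\ge 1$ with no case split, and a strictly stronger conclusion — \eqref{eq:gap2-1} holds for \emph{every} $k\in\Z$, so the hypothesis $|k|\ge j+1$ is superfluous; the symmetry trick also transfers verbatim to the generalized operator of Remark \ref{rem:restriction}, since nonnegative combinations of odd-power differences retain both the symmetry about $k=-\tfrac12$ and the nonnegativity of coefficients. What the paper's pairing buys: for $j\ge 2$ it produces the intermediate bound $(2j+1)(j+1)k^{2(j-1)}$, a lower bound growing like $k^{2j-2}$ rather than $k^2$, which is closer to the true size of the gap; your truncation to low-order terms deliberately discards that extra growth, though for \eqref{eq:gap2-2} only divergence is needed.
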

\begin{proof}
When $j=1$, it is easy to see that
\[\begin{aligned}
-i(\lambda_{k+1} - \lambda_k) =&~{} 3k^2 + 3k + 1 = \frac32k^2 + \frac32(k^2+2k+1) - \frac12 \\
=&~{} k^2 + \frac32(k+1)^2 +\frac12(k^2-1)\ge ~{}k^2,
\end{aligned}\]
for  $|k| \ge 1$,which satisfies \eqref{eq:gap2-1}, and thus \eqref{eq:gap2-2} follows.

Now, fix $j \ge 2$. A straightforward computation yields
\[2m k^2 + (2j+1-2m+1) k > 0, \quad m=1,2,\cdots,j,\]
whenever $k > 0$ or $k < - \frac{j-m+1}{m}$, which implies
\[\begin{aligned}
\binom{2j+1}{2m-1}k^2 &+ \binom{2j+1}{2m}k\\
=&~{} \left(2m k^2 + (2j+1-2m+1) k\right) \frac{(2j+1)(2j)\cdots(2j+1-(2m-1)+1)}{(2m)!}\\
>&~{}\frac{(j+1)}{m}\binom{2j+1}{2m-1}.
\end{aligned}\]
Thus, we conclude for $|k| \ge j+1$ that
\[\begin{aligned}
| \lambda_{k+1} - \lambda_k| =&~{} \left|\sum_{\ell=1}^{2j+1} \binom{2j+1}{\ell} k^{2j+1-\ell} \right|\\
=&~{} \left|1 + \sum_{m=1}^{j} \left(\binom{2j+1}{2m-1} k^2+\binom{2j+1}{2m} k\right)k^{2(j-m)} \right|\\
>&~{} (2j+1)(j+1)k^{2(j-1)}  \\
>&~{} k^2,
\end{aligned}\]
which completes the proof.
\end{proof}
\begin{remark}\label{rem:restriction}
Lemma \ref{lem:gap} ensures that the gap condition is still valid for the (generalized) linear operator
\[\tilde A = \sum_{m=0}^{j} (-1)^m\alpha_m\partial_x^{2m-1},\]
where $\alpha_m \ge 0$ with $\alpha_j \neq 0$. Indeed, we have eigenvalues associated to $\tilde A$ as
\[\tilde{\lambda}_k = i\sum_{m=0}^j \alpha_m k^{2m-1},\]
and the gap condition
\[|\tilde \lambda_{k+1} - \tilde \lambda_k| \ge k^2\max_{m=0,\cdots, j}\{\alpha_m\},\]
for $|k| > j$, which gives  $|\tilde \lambda_{k+1} - \tilde \lambda_k| \rightarrow \infty$ when $|k| \to \infty$. Additionally, it is not necessary to restrict $\alpha_m \ge 0$, however we do not further discuss about it here.
\end{remark}

\subsection{Fourier restriction spaces}

The function space equipped with the Fourier restriction norm, which is the so-called $X^{s,b}$ spaces, has been proposed by Bourgain \cite{Bourgain1993-1,Bourgain1993-2} to solve the periodic NLS and generalized KdV. Since then, it has played a crucial role in the theory of dispersive equations, and has been further developed by many researchers, in particular,  Kenig, Ponce and Vega \cite{KPV1996} and Tao \cite{Tao2001}. In the following, to ensure global control results, the space $X^{s,b}$ will be of paramount importance.

Let $f$ be a Schwartz function, i.e., $f \in \mathcal{S}_{t,x}(\R \times \T)$. $\wt{f}$ or $\ft (f)$ denotes the space-time Fourier transform of $f$ defined by
\[\wt{f}(\tau,n)=\frac{1}{2\pi}\int_{\R}\int_{0}^{2\pi} e^{-ixn}e^{-it\tau}f(t,x)\; dxdt .\]
Then, it is known that the (space-time) inverse Fourier transform is naturally defined as
\[f(t,x)=\frac{1}{2\pi}\sum_{n\in\Z}\int_{\R} e^{ixn}e^{it\tau}\wt{f}(\tau,n)\;dt .\]
Moreover, we use $\ft_x$ (or $\wh{\;}$ ) and $\ft_t$ to denote the spatial and temporal Fourier transform, respectively.

For given $s,b\in\mathbb{R}$, we define the space $X^{s,b}$ associated to \eqref{eq:kdv} as the closure of $\mathcal{S}_{t,x}(\R \times \T)$ under the norm
\[\norm{f}_{X^{s,b}}^2 = \frac{1}{2\pi}\sum_{k \in \Z} \int_{\R} \bra{k}^{2s} \bra{\tau-k^{2j+1}}^{2b}|\widetilde{f}(\tau,k)|^2 \; d\tau,\]
which is equivalent to the expression $\norm{W(-t)f(t,x)}_{H_t^bH_x^s}$. Note that the definition of $X^{s,b}$ ensures the trivial nesting property
\begin{equation}\label{eq:nesting}
X^{s,b} \subset X^{s',b'}, \quad \mbox{whenever} \quad s' \le s,~ b' \le b,
\end{equation}
and this immersion is continuous.
Moreover, it is known (see, for instance, \cite[Lemma 2.11]{Tao}) that $X^{s,b}$ space is stable with respect to time localization, that is,
\begin{equation}\label{eq:Xsb stable}
\norm{\eta(t) u}_{X^{s,b}} \lesssim_{b, \psi} \norm{u}_{X^{s,b}},
\end{equation}
for any time cutoff function $\eta \in \mathcal S_t(\R)$.

According to \cite[Theorem 1.2]{KPV1996}, it is necessary to fix the exponent $b =\frac12$ in $X^{s,b}$ for the study of the periodic KdV equation. Otherwise, one cannot, indeed, obtain one-derivative gain in the \emph{high-low} non-resonant interactions to kill the derivative in the nonlinearity. Thereafter, it becomes natural to fix $b=\frac12$ even for the other periodic problems, but it is not necessary. For instance, in the higher-order KdV-type case, in particular, $j \ge 2$, one can obtain $\min\{2bj, (1-b)j\}$-derivative gains from the \emph{high-low} non-resonant interactions, which is enough to remove the one derivative in the nonlinearity, whenever $\frac{1}{2j} \le b \le 1- \frac{1}{2j}$. However, the present paper covers the KdV and Kawahara cases as well, we, thus, fix $b = \frac12$, throughout the paper.

On the other hand, $X^{s,\frac12}$ space does not be embedded in the classical solution space $C_tH^s\equiv C(\mathbb{R};H^s(\mathbb{T}))$, it, thus, itself is not enough for the well-posedness theory. To complement the lack of  the embedding property, we define the space $Y^s$ for solutions with the following norm
\[\norm{f}_{Y^s} := \norm{f}_{X^{s,\frac12}} + \big\|\bra{k}^{s}\widetilde f\big\|_{\ell_k^2L_{\tau}^1}.\]

For a given time interval $I$, let $X_I^{s,b}  $ (resp. $Y^s_I$) denote the time localization of $X^{s,b}$ (resp. $Y^s$) on the interval $I$ with the norm
\[
\norm{f}_{X_I^{s,b}}=\inf \left\{\norm{g}_{X^{s,b}} : g=f \; \mbox{ on } \; I \times \T\right\}
\]%
\[
\left(  \text{ resp. }\norm{f}_{Y_I^s}=\inf \left\{\norm{g}_{Y^s} : g=f \; \mbox{ on } \; I \times \T\right\}  \right)  .
\]
For simplicity, we denote $X_I^{s,b}  $ (resp. $Y^s_I$) by $X_T^{s,b}$ (resp. $Y_T^s$), if $I= (0,T)$.

\subsection{Estimates for higher-order KdV-type equation} We summarize well-known estimates, already established in the literature, which will play important roles in establishing the exact controllability and stabilizability of the system \eqref{eq:kdv}. For this, we introduce a cut-off function $\eta \in C_{c}^{\infty}(\mathbb{R})$ such that $\eta=1,$ if $t\in[-1,1]$ and $\eta=0,$ if $t\notin (-2,2).$

\begin{lemma}[$X^{s,b}$ estimates, \cite{GTV1997, Hirayama}]\label{lem:Xsb}
Let  $0 < T < \infty$ be given. Then,
\begin{enumerate}
\item For all $s \in \R$, we have for $u \in Y_T^s$
\[\norm{u}_{C_TH^s} \lesssim \norm{u}_{Y_T^s}.\]
\item For all $s \in \R$, we have for $f \in H^s$
\[\norm{W(t)f}_{Y_T^s} \lesssim_{T,\eta} \norm{f}_{H^s}.\]
If $T\leq 1,$ then the constant in the right-hand side does not depend on $T.$

\item For all $s \in \R$, we have for $F \in Y_T^s$
\begin{equation}\label{eq:inhomogeneous}
\left\|\int_0^t W(t-\tau)F(\tau)\; d\tau \right\|_{Y_T^s} \lesssim_{T,\eta} \norm{\mathcal F^{-1}\left(\bra{\tau - k^{2j+1}}^{-1}\widetilde F\right)}_{Y_T^s}.
\end{equation}
If $T\leq 1,$ then the positive constant involved in \eqref{eq:inhomogeneous} does not depend on $T.$

\item For $s\in \mathbb{R}$ with $s \ge -\frac{j}{2}$, we have for $u,v \in Y_T^s$
\[\norm{\mathcal F^{-1}\left(\bra{\tau - k^{2j+1}}^{-1}\widetilde{\partial_x(uv)}\right)}_{Y_T^s} \lesssim \norm{u}_{Y_T^s}\norm{v}_{Y_T^s}\]

\item For all $s \in \R$, $-\frac12 < b' \le b < \frac12$ and $0 < T < 1$, we have for $u \in X_T^{s,b}$
\[\norm{u}_{X_T^{s,b'}} \lesssim T^{b-b'} \norm{u}_{X_T^{s,b}}.\]
\end{enumerate}
\end{lemma}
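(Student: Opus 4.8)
The plan is to treat the five items separately: items (1)--(3) and (5) are structural consequences of the definition of the Fourier restriction spaces, while item (4) is the genuinely dispersive bilinear estimate and will be the main obstacle. For item (1), I would write $u$ through its space-time inverse Fourier transform and exploit the second component of the $Y^s$ norm. For each fixed frequency $k$ one has $\sup_{t} |\wh u(t,k)| \le \int_{\R} |\wt u(\tau,k)| \, d\tau$, so taking the weighted $\ell^2_k$ norm gives $\norm{u}_{C_TH^s} \lesssim \big\|\bra{k}^s \wt u\big\|_{\ell_k^2 L_\tau^1} \le \norm{u}_{Y^s}$; continuity in $t$ follows from dominated convergence, and the local statement by taking the infimum over extensions. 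This is precisely why $Y^s$ is enlarged beyond $X^{s,\frac12}$, which fails to embed into $C_TH^s$. For item (2), inserting the cutoff $\eta$ one computes $\wt{\eta(t) W(t) f}(\tau,k) = \wh\eta(\tau - k^{2j+1}) \wh f(k)$, so the $X^{s,\frac12}$ part factorizes as $\norm{\eta}_{H^{1/2}}\norm{f}_{H^s}$ and the $\ell_k^2 L_\tau^1$ part as $\norm{\wh\eta}_{L^1}\norm{f}_{H^s}$; both factors involving $\eta$ are finite since $\eta$ is Schwartz, and the $T$-independence for $T\le 1$ comes from choosing the cutoff adapted to the interval.

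For item (3), I would expand the Duhamel integral, localize in time, and reduce the $Y^s$ norm of $\int_0^t W(t-\tau)F\,d\tau$ to that of $\ft^{-1}(\bra{\tau - k^{2j+1}}^{-1}\wt F)$ as claimed in \eqref{eq:inhomogeneous}; the only nontrivial input is the one-dimensional estimate for the multiplier $\bra{\sigma}^{-1}$ on the characteristic variety $\sigma = \tau - k^{2j+1}$, handled by splitting into $|\sigma| \lesssim 1$ and $|\sigma| \gg 1$. Item (5) is the elementary gain of a power of $T$: for $-\tfrac12 < b' \le b < \tfrac12$ and $T<1$, inserting a time cutoff supported near $[0,T]$ and applying H\"older in the dual time variable produces the factor $T^{b-b'}$; this is exactly the time-localization estimate already quoted as \cite[Lemma 2.11]{Tao} and compatible with the stability \eqref{eq:Xsb stable}.

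The hard part will be item (4), the bilinear estimate, which encodes the dispersive smoothing that compensates the derivative in the nonlinearity. The plan is to pass to the $L^2$ setting by duality, decompose the frequencies $k = k_1 + k_2$ and the modulations $\sigma = \tau - k^{2j+1}$, $\sigma_i = \tau_i - k_i^{2j+1}$ dyadically, and exploit the resonance identity
\[
\sigma - \sigma_1 - \sigma_2 = k_1^{2j+1} + k_2^{2j+1} - (k_1+k_2)^{2j+1},
\]
whose magnitude supplies the smoothing. For KdV ($j=1$) the right-hand side is simply $-3 k_1 k_2 (k_1+k_2)$, but for general $j$ the resonance polynomial is more delicate; following Hirayama \cite{Hirayama} I would use its factorization to obtain a lower bound of order $\bra{k_{\max}}^{2j-2} |k_1 k_2 (k_1+k_2)|$ in the non-resonant regime, which is exactly what is needed to recover one full derivative and yield the threshold $s \ge -\tfrac{j}{2}$.

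The delicate cases are the high-high-to-low interactions $|k| \ll |k_1| \sim |k_2|$ and the nearly-resonant regions where two frequencies almost cancel, in which the largest of the three modulations must absorb $\partial_x$; here one also verifies the $\ell_k^2 L_\tau^1$ component of the $Y^s$ norm rather than the $X^{s,\frac12}$ part alone, which is why the estimate is stated in $Y^s_T$. Since only $s \ge 0$ is actually needed for the control results in this paper, a restricted version of this argument (avoiding the most singular negative-regularity regime) would already suffice; nonetheless I would carry out the full sub-critical range $s \ge -\tfrac{j}{2}$ to match the cited result, with the understanding that the resonance factorization for general $j$ is the one genuinely new technical ingredient compared with the KdV and Kawahara cases.
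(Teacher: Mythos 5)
Your proposal is correct, and it matches the paper's treatment: the paper does not prove Lemma \ref{lem:Xsb} at all but simply quotes it from \cite{GTV1997, Hirayama}, and your sketch reconstructs exactly those cited arguments --- the standard structural Bourgain-space estimates (embedding via the $\ell_k^2 L_\tau^1$ component, the factorization $\widetilde{\eta W(t)f}(\tau,k)=\widehat{\eta}(\tau-k^{2j+1})\widehat{f}(k)$, modulation splitting for the Duhamel term, and the $T^{b-b'}$ gain) for items (1)--(3) and (5), and Hirayama's resonance-function factorization with the lower bound $\gtrsim |k_1k_2(k_1+k_2)|\,k_{\max}^{2j-2}$ for the bilinear estimate (4) at the threshold $s\ge -\tfrac{j}{2}$. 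No gaps to flag.
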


\begin{remark}\label{rem:nonlinear embedding}
The right-hand side of \eqref{eq:inhomogeneous} is simply dominated by $\norm{F}_{L^2(0,T;H^s)}$ due to the definition of $Y_T^s$ norm, the nesting property \eqref{eq:nesting} and the weight $\bra{\tau - k^{2j+1}}^{-1}$.
\end{remark}

\section{$L^4$-Strichartz estimate}\label{sec:L^4}
In this section, we provide a rigorous proof of $L^4$-Strichartz estimate for higher-order KdV equation. 

\begin{lemma}[Strichartz estimates]\label{strichartz_estimates}
The following estimates
hold:%
\begin{equation}
\norm{f}_{L^4(\R \times \T)} \lesssim \norm{f}_{X^{0,b}}, \quad b > \frac{j+1}{2(2j+1)},\quad \forall j\in \mathbb{N}, \label{k11}
\end{equation}
where the implicit constant depends on $b$ and $j$.
\end{lemma}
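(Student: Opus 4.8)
The plan is to deduce \eqref{k11} from a bilinear $L^2$ estimate. Since the inequality is unchanged upon replacing $f$ by $\Re f$ or $\Im f$, I may assume $f$ is real, so that $\norm{f}_{L^4}^2=\norm{f^2}_{L^2}$, and it suffices to prove
\[
\norm{fg}_{L^2(\R \times \T)}\lesssim\norm{f}_{X^{0,b}}\norm{g}_{X^{0,b}},\qquad b>\tfrac{j+1}{2(2j+1)},
\]
and then set $g=f$. First I would pass to the Fourier side and normalize $F(\tau,k):=\bra{\tau-k^{2j+1}}^{b}|\wt f(\tau,k)|$ (and $G$ likewise), so that $\norm{F}_{L^2_\tau\ell^2_k}=\norm{f}_{X^{0,b}}$ and the problem becomes a weighted convolution bound for $F,G$. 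Writing $\sigma_i:=\tau_i-k_i^{2j+1}$ for the modulations of the two inputs and $\sigma:=\tau-k^{2j+1}$ for that of the output, the algebraic core is the resonance relation $\sigma-\sigma_1-\sigma_2=H(k,k_1)$ on the convolution locus $k=k_1+k_2,\ \tau=\tau_1+\tau_2$, where
\[
H(k,k_1):=k_1^{2j+1}+(k-k_1)^{2j+1}-k^{2j+1}
\]
measures how far the interaction lies from the characteristic surface.

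Next I would decompose $f=\sum_\lambda f_\lambda$ and $g=\sum_\mu g_\mu$ dyadically in modulation, with $\bra{\sigma}\sim\lambda$ for the pieces of $f$ and $\bra{\sigma}\sim\mu$ for those of $g$, $\lambda,\mu\ge1$, and estimate each product $\norm{f_\lambda g_\mu}_{L^2}$. Dualizing against an arbitrary $\Phi\in L^2$ with $\norm{\Phi}_{L^2}=1$ and applying Cauchy--Schwarz on the interaction space $(k_1,k_2,\tau_1,\tau_2)$, the ``mass'' factor collapses by translation invariance of the convolution to $\norm{f_\lambda}_{L^2}\norm{g_\mu}_{L^2}$, while the ``support'' factor is controlled by the measure of the frozen interactions. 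For fixed output $(\tau,k)$ the relation above forces $|H(k,k_1)-\sigma|\lesssim\max(\lambda,\mu)$, and the admissible $\tau_1$ then ranges over measure $\lesssim\min(\lambda,\mu)$; hence everything reduces to the single counting quantity
\[
N(\tau,k):=\#\set{k_1\in\Z:\ |H(k,k_1)-\sigma|\lesssim\max(\lambda,\mu)},
\]
through the bound $\norm{f_\lambda g_\mu}_{L^2}\lesssim\big(\min(\lambda,\mu)\,\sup_{\tau,k}N(\tau,k)\big)^{1/2}\norm{f_\lambda}_{L^2}\norm{g_\mu}_{L^2}$.

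The crux — and the step I expect to fight hardest — is the counting bound for $N$, where the order $j$ enters. For fixed $k\neq0$ the map $k_1\mapsto H(k,k_1)$ is a polynomial of degree $2j$, even about its unique critical point $k_1=k/2$, with $H''(k,k/2)\sim|k|^{2j-1}$; equivalently $H=-(2j+1)\,k\,k_1\,(k-k_1)\,Q_j(k_1,k-k_1)$ with $\deg Q_j=2j-2$. A window of width $\Lambda:=\max(\lambda,\mu)$ thus meets $\lesssim(\Lambda/|k|^{2j-1})^{1/2}$ integers clustered near $k/2$ and $\lesssim(\Lambda/|k|)^{1/2j}$ integers along each monotone branch. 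Balancing the two identifies the output scale $|k|\sim\Lambda^{1/(2j+1)}$, at and beyond which $N\lesssim\Lambda^{1/(2j+1)}$; this is the bound that drives the final summation. The genuinely delicate contribution is the near-resonant range of small output frequency, $|k|\lesssim\Lambda^{1/(2j+1)}$ (where $k-k_1\approx-k_1$ and $H$ is nearly flat): there $N$ can be as large as $\Lambda^{1/2j}$, so these frequencies must be summed explicitly rather than bounded by a supremum, the extreme case $k=0$ reducing to the one-dimensional embedding $H_t^{1/4}\hookrightarrow L_t^4$ that already forces $b\geq\tfrac14$.

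Finally I would restore the Fourier weights through $\norm{f_\lambda}_{L^2}\sim\lambda^{-b}\norm{f_\lambda}_{X^{0,b}}$ and sum the bilinear pieces over the dyadic pair $(\lambda,\mu)$. Inserting $\min(\lambda,\mu)$ from the $\tau_1$-measure and $\sup N\lesssim\max(\lambda,\mu)^{1/(2j+1)}$ from the count produces, for $\lambda\le\mu$, the kernel $\lambda^{1/2-b}\mu^{1/(2(2j+1))-b}$, and a Schur test shows the associated operator is bounded on $\ell^2$ exactly when $\tfrac12+\tfrac{1}{2(2j+1)}-2b\le0$, i.e. $b>\tfrac{j+1}{2(2j+1)}$ — precisely the claimed range. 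Specializing $g=f$ then yields \eqref{k11}; for $j=1,2$ this reproduces the familiar thresholds $b>\tfrac13$ and $b>\tfrac{3}{10}$.
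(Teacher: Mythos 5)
Your architecture (reduction to a bilinear $L^2$ estimate, dyadic modulation decomposition, Cauchy--Schwarz reduction to a lattice-point count, Schur test) is a legitimate and standard route, but there is a genuine gap at exactly the point you flag and then skip: the opposite-sign, near-resonant interactions. Your final summation inserts $\sup_{\tau,k}N(\tau,k)\lesssim \max(\lambda,\mu)^{1/(2j+1)}$, yet, as you yourself observe, this bound is false when the inputs have opposite signs and the output is small: for $k_1\approx-(k-k_1)$ one has $H(k,k_1)\approx(2j+1)k\,k_1^{2j}$, so at $|k|=1$ the count reaches $N\sim\max(\lambda,\mu)^{1/2j}$. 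If you run your Schur test with the true supremum $\max(\lambda,\mu)^{1/2j}$, the condition becomes $\tfrac12+\tfrac{1}{4j}-2b<0$, i.e. $b>\tfrac{2j+1}{8j}$, which is strictly worse than $\tfrac{j+1}{2(2j+1)}$ for every $j$. The sentence ``these frequencies must be summed explicitly rather than bounded by a supremum'' is not a proof: after your Cauchy--Schwarz step the $k$-dependent count multiplies the weight $\sum_{k_1}\norm{\wt f_\lambda(\cdot,k_1)}_{L^2}^2\norm{\wt g_\mu(\cdot,k-k_1)}_{L^2}^2$, and nothing prevents this weight from concentrating precisely on the bad output frequencies $|k|\lesssim\max(\lambda,\mu)^{1/(2j+1)}$ (take $\wt f_\lambda$ supported near $k_1\sim R$ and $\wt g_\mu$ near $k_2\sim -R$), so the promised summation cannot be closed inside your framework without a new ingredient. (Two minor points: the Schur condition must be strict, since at $\tfrac12+\tfrac{1}{2(2j+1)}-2b=0$ the dyadic sum diverges; and the endpoint $k=0$ reduction to $H_t^{1/4}\hookrightarrow L_t^4$ is asserted but also not carried out.)

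The paper closes precisely this hole with a device rather than a refined count, and this is the essential difference between the two arguments. It splits $f$ into a trivial low-frequency piece plus positive- and negative-frequency parts $f_{2,2},f_{2,1}$, eliminates \emph{all} cross (opposite-sign) interactions at once by H\"older and conjugation symmetry, $\norm{f_{2,1}f_{2,2}}_{L^2}\le\norm{f_{2,1}}_{L^4}\norm{f_{2,2}}_{L^4}=\norm{f_{2,1}^2}_{L^2}^{1/2}\norm{f_{2,2}^2}_{L^2}^{1/2}$, and handles the finitely many residual outputs $|k|\le1$ by Cauchy--Schwarz in $\tau_1$ only, followed by Minkowski and Cauchy--Schwarz in $k_1$, which needs just $b>\tfrac14$. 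The counting argument (via the convexity and factorization of $h_j$ in Lemma \ref{lem:elementary>0}) is then applied only to same-sign interactions $k_1,\,k-k_1>1$; there $|k_1-k/2|<k/2$, hence $k|k_1-k/2|^{2j}\ge|k_1-k/2|^{2j+1}$, and it is exactly this inequality that yields the threshold $b>\tfrac{j+1}{2(2j+1)}$. To complete your proof you should import this step (or an equivalent one): once the sign restriction is in force, your heuristic count $N\lesssim\max(\lambda,\mu)^{1/(2j+1)}$ does hold uniformly in $k$, and your Schur summation then goes through as written.
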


\begin{remark}
As well-known, the intuition of Lemma \ref{strichartz_estimates} is as follows: By Sobolev embedding (in both time and spatial variables), one has
\[\|f\|_{L_{t,x}^4} \lesssim \|S(-t)f\|_{H_t^{\frac14}H_x^{\frac14}}.\]
On the other hand, from \eqref{eq:kdv_int}, one roughly guesses that $\partial_t \sim \partial_x^{2j+1}$, which transfers spatial derivatives to temporal derivatives ($\partial_x^{\frac14} \mapsto \partial_t^{\frac{1}{4(2j+1)}}$). Hence, one can guess  
\[\|f\|_{L_{t,x}^4} \lesssim \|S(-t)f\|_{H_t^{\frac14}H_x^{\frac14}} \lesssim \|S(-t)f\|_{H_t^{\frac14 + \frac{1}{4(2j+1)}}L_x^2} \lesssim \|f\|_{X^{0,b}}, \quad b > \frac{j+1}{2(2j+1)}.\]
The equality $b =  \frac{j+1}{2(2j+1)}$ can also be obtained, but we do not attempt to give it here, in order to avoid complicated computations. 
\end{remark}

The following lemma plays an essential role to prove Lemma \ref{strichartz_estimates}.
\begin{lemma}\label{lem:elementary>0}
For $j \in \Z$ and $c \in \R$ with $j \ge 0$ and $c > 0$, let
\begin{equation}\label{eq:h_j(x)}
h_j(x) = x^{2j+1} + (c-x)^{2j+1}.
\end{equation}
Then $h_j$ satisfies
\begin{enumerate}
\item $h_j$ is a symmetry about $x = \frac{c}{2}$.
\item $h_j(x) > 0$ for all $x \in \R$.
\item $h_j'(\frac{c}{2}) = 0$.
\end{enumerate}
\vspace{0.1cm}
If $j \ge 1$,
\begin{enumerate}
\item[(4)] $h_j''(x) > 0$ for all $x \in \R$.
\item[(5)] $h_j$ has only one absolute minimum value at $x = \frac{c}{2}$.
\item[(6)] $h_j$ can be written as
\begin{equation}\label{eq:h_j}
\begin{aligned}
h_j(x) =&~{} \left(x-\frac{c}{2} + \alpha\right)\left(x-\frac{c}{2} - \alpha\right)\sum_{n=1}^j\binom{2j+1}{2n}h_{j-n}\left(\frac{c}{2}\right)\left(\sum_{\ell =0}^{n-1}\left(x-\frac{c}{2} \right)^{2n-2-2\ell}\alpha^{2\ell}\right) \\
&~{}+ \sum_{n=0}^{j}\binom{2j+1}{2n}h_{j-n}\left(\frac{c}{2}\right)\alpha^{2n},
\end{aligned}
\end{equation}
for any $\alpha \in \R$.
\end{enumerate}
\end{lemma}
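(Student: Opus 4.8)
The plan is to reduce all six items to a single binomial identity. First I would recenter the variable by setting $y = x - \frac{c}{2}$, so that $x = \frac{c}{2}+y$ and $c-x = \frac{c}{2}-y$. Expanding $\bigl(\frac{c}{2}+y\bigr)^{2j+1}$ and $\bigl(\frac{c}{2}-y\bigr)^{2j+1}$ by the binomial theorem and adding them, the odd powers of $y$ cancel while the even powers double, giving
\begin{equation*}
h_j(x) = 2\sum_{n=0}^{j}\binom{2j+1}{2n}\left(\frac{c}{2}\right)^{2j+1-2n}\left(x-\frac{c}{2}\right)^{2n}.
\end{equation*}
Since the definition \eqref{eq:h_j(x)} gives $h_{j-n}\bigl(\frac{c}{2}\bigr) = 2\bigl(\frac{c}{2}\bigr)^{2(j-n)+1}$, this is exactly
\begin{equation*}
h_j(x) = \sum_{n=0}^{j}\binom{2j+1}{2n}h_{j-n}\left(\frac{c}{2}\right)\left(x-\frac{c}{2}\right)^{2n}, \tag{$\star$}
\end{equation*}
a polynomial in $\bigl(x-\frac{c}{2}\bigr)^2$ all of whose coefficients are strictly positive. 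I regard $(\star)$ as the master identity from which every item follows.

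From $(\star)$ the first three items are immediate. Item (1) holds because only even powers of $x-\frac{c}{2}$ occur (equivalently, the substitution $x\mapsto c-x$ in \eqref{eq:h_j(x)} leaves $h_j$ unchanged). Item (2) follows since each coefficient $\binom{2j+1}{2n}h_{j-n}\bigl(\frac{c}{2}\bigr)$ is positive and $\bigl(x-\frac{c}{2}\bigr)^{2n}\ge 0$, so in fact $h_j(x)\ge h_j\bigl(\frac{c}{2}\bigr) = 2\bigl(\frac{c}{2}\bigr)^{2j+1}>0$, the $n=0$ term. For item (3), differentiating $(\star)$ produces only odd powers $\bigl(x-\frac{c}{2}\bigr)^{2n-1}$, each of which vanishes at $x=\frac{c}{2}$.

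For $j\ge 1$, items (4) and (5) again come from $(\star)$. Differentiating twice gives
\begin{equation*}
h_j''(x) = \sum_{n=1}^{j}2n(2n-1)\binom{2j+1}{2n}h_{j-n}\left(\frac{c}{2}\right)\left(x-\frac{c}{2}\right)^{2n-2};
\end{equation*}
the $n=1$ term is the positive constant $2\binom{2j+1}{2}h_{j-1}\bigl(\frac{c}{2}\bigr)$ and every remaining term is nonnegative, so $h_j''>0$ everywhere, which is item (4). Item (5) then follows because strict convexity together with the critical point $h_j'\bigl(\frac{c}{2}\bigr)=0$ from item (3) forces $x=\frac{c}{2}$ to be the unique global minimizer.

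Finally, item (6) is a bookkeeping step resting on the geometric summation identity
\begin{equation*}
\sum_{\ell=0}^{n-1}\left(x-\frac{c}{2}\right)^{2n-2-2\ell}\alpha^{2\ell} = \frac{\left(x-\frac{c}{2}\right)^{2n}-\alpha^{2n}}{\left(x-\frac{c}{2}\right)^{2}-\alpha^{2}}.
\end{equation*}
Multiplying by $\bigl(x-\frac{c}{2}+\alpha\bigr)\bigl(x-\frac{c}{2}-\alpha\bigr) = \bigl(x-\frac{c}{2}\bigr)^2-\alpha^2$ collapses the first sum in \eqref{eq:h_j} to $\sum_{n=1}^{j}\binom{2j+1}{2n}h_{j-n}\bigl(\frac{c}{2}\bigr)\bigl(\bigl(x-\frac{c}{2}\bigr)^{2n}-\alpha^{2n}\bigr)$; adding the second sum $\sum_{n=0}^{j}\binom{2j+1}{2n}h_{j-n}\bigl(\frac{c}{2}\bigr)\alpha^{2n}$ cancels all the $\alpha^{2n}$ contributions with $n\ge 1$ and leaves precisely the right-hand side of $(\star)$, valid for every $\alpha\in\R$. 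The only points that need care are checking that the odd-power cancellation in the binomial step is complete and that the telescoping in item (6) restores exactly the constant term $\binom{2j+1}{0}h_j\bigl(\frac{c}{2}\bigr)$; neither presents a genuine obstacle.
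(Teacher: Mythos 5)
Your proof is correct, but it follows a genuinely different route from the paper's. You recenter at $y=x-\frac{c}{2}$ and obtain the master identity $(\star)$ (the paper's intermediate formula for $h_j$ as a polynomial in $\left(x-\frac{c}{2}\right)^2$ with coefficients $\binom{2j+1}{2n}h_{j-n}\left(\frac{c}{2}\right)$) in one stroke from the binomial theorem, and then read off all six items from it: positivity from the positivity of the coefficients, items (3)--(5) from term-by-term differentiation, and item (6) from the difference-of-powers factorization $\left(a-b\right)\sum_{\ell=0}^{n-1}a^{n-1-\ell}b^{\ell}=a^{n}-b^{n}$ with $a=\left(x-\frac{c}{2}\right)^2$, $b=\alpha^2$. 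The paper instead proves items (1)--(5) by direct manipulation of the defining expression (a factorization of $x^{2j+1}+(c-x)^{2j+1}$ for item (2), explicit first and second derivatives for items (3)--(4)), and only then establishes the master identity by induction on $j$, integrating the recursion $h_m''=(2m+1)(2m)h_{m-1}$ twice from $\frac{c}{2}$; the telescoping identity behind item (6) is proved by a second induction on $n$. What your approach buys is economy and transparency: both inductions are replaced by closed-form algebra, and item (2) becomes immediate since every Taylor coefficient at the center is positive (using $c>0$). What the paper's approach buys is the structural observation $h_j''=(2j+1)(2j)h_{j-1}$, which explains where the coefficients in the master identity come from and is reused directly for item (4). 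One presentational caveat in your item (6): the quotient form of the geometric sum is undefined when $\left(x-\frac{c}{2}\right)^2=\alpha^2$, so you should state the step as the polynomial identity obtained after multiplying through by $\left(x-\frac{c}{2}\right)^2-\alpha^2$ (which is what you in fact use); with that phrasing the argument is complete.
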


\begin{proof}
It is easy to see that
\[h_j\left(\frac{c}{2} + x\right) = \left(\frac{c}{2}+x\right)^{2j+1} + \left(\frac{c}{2}-x\right)^{2j+1} = h_j\left(\frac{c}{2} - x\right),\]
which satisfies Item (1).

\vspace{0.2cm}

For Item (2), it suffices to show $h_j(x) > 0$ for all $x \ge \frac{c}{2}$ thanks to Item (1). Obviously, $h_j(\frac{c}{2}) = \frac{c^{2j+1}}{2^{2j}} > 0$. Since
\[a^{2j+1} + b^{2j+1} = (a+b)\left(\sum_{n=0}^{2j}(-1)^n a^{2j-n}b^n\right) = (a+b)\left(\sum_{m=0}^{j-1}(a-b) a^{2j-2m-1}b^{2m} + b^{2j}\right),\]
for $x > \frac{c}{2}$, we have
\[h_j(x) = c\left(\sum_{m=0}^{j-1}(2x-c) x^{2j-2m-1}(c-x)^{2m} + (c-x)^{2j}\right).\]
Note that when $x = c$, we have $h_j(c) = c^{2j+1} > 0$. Thus, for all $x > \frac{c}{2}$ with $x \neq c$, all terms are strictly positive, which proves Item (2).

\vspace{0.2cm}

A direct computation gives
\[h_j'(x) = (2j+1)\left(x^{2j} - (c-x)^{2j}\right),\]
thus $h'(\frac{c}{2}) = 0$. This proves Item (3).

\vspace{0.2cm}

In what follows, we fix $j \ge 1$. Item (4) follows immediately from Item (2) due to
\[h_j''(x) = (2j+1)(2j)\left(x^{2j-1} + (c-x)^{2j-1}\right).\]

\vspace{0.2cm}

Item (5) immediately follows from Items (3) and (4).

\vspace{0.2cm}

For Item (6), we first show
\begin{equation}\label{eq:h_j-1}
h_j(x) = \sum_{n=0}^j\binom{2j+1}{2n}h_{j-n}\left(\frac{c}{2}\right)\left(x-\frac{c}{2}\right)^{2n}.
\end{equation}
When $j=1$, we see that
\[h_1(x) = x^3 + (c-x)^3 = 3c(x^2 - cx) + c^3 = 3c\left(x-\frac{c}{2}\right)^2 + \frac{c^3}{4}.\]
Since
\[\binom{3}{0}h_{1}\left(\frac{c}{2}\right) = 2 \cdot \left(\frac{c}{2}\right)^3 = \frac{c^3}{4} \quad \mbox{and} \quad \binom{3}{2}h_{0}\left(\frac{c}{2}\right) = 3c,\]
\eqref{eq:h_j-1} is true for $j=1$. Assume that \eqref{eq:h_j-1} is true for $j = m-1$. Then, by the induction hypothesis, we have
\[h_m''(x) = (2m+1)(2m)h_{m-1}(x) = (2m+1)(2m)\sum_{n=0}^{m-1}\binom{2m-1}{2n}h_{m-1-n}\left(\frac{c}{2}\right)\left(x-\frac{c}{2}\right)^{2n}.\]
Since $h_m'(\frac{c}{2}) = 0$, by integrating from $\frac{c}{2}$ to $x$ twice, we obtain
\[\begin{aligned}
h_m(x) =&~{} \sum_{n=0}^{m-1}\frac{(2m+1)(2m)}{(2n+2)(2n+1)}\binom{2m-1}{2n}h_{m-1-n}\left(\frac{c}{2}\right)\left(x-\frac{c}{2}\right)^{2n+2} + h_m\left(\frac{c}{2}\right)\\
=&~{} \sum_{n=0}^{m}\binom{2m+1}{2n}h_{m-n}\left(\frac{c}{2}\right)\left(x-\frac{c}{2}\right)^{2n}.
\end{aligned}\]
Thus, by the mathematical induction, we prove \eqref{eq:h_j-1} for all $j \ge 1$.

\vspace{0.2cm}

In order to derive \eqref{eq:h_j} from \eqref{eq:h_j-1}, it suffices to show that for each $n\in \mathbb{N}$
\begin{equation}\label{eq:induction}
\left(x-\frac{c}{2} \right)^{2n} = \left(x-\frac{c}{2} + \alpha\right)\left(x-\frac{c}{2} - \alpha\right)\sum_{\ell =0}^{n-1}\left(x-\frac{c}{2} \right)^{2n-2-2\ell}\alpha^{2\ell} + \alpha^{2n}.
\end{equation}
Indeed, if \eqref{eq:induction} is true, then we reduce \eqref{eq:h_j-1} as
\[\begin{aligned}
h_j(x) =&~{} \sum_{n=0}^j\binom{2j+1}{2n}h_{j-n}\left(\frac{c}{2}\right)\left(x-\frac{c}{2}\right)^{2n}\\
=&~{} \sum_{n=1}^j\binom{2j+1}{2n}h_{j-n}\left(\frac{c}{2}\right)\left(\left(x-\frac{c}{2} + \alpha\right)\left(x-\frac{c}{2} - \alpha\right)\sum_{\ell =0}^{n-1}\left(x-\frac{c}{2} \right)^{2n-2-2\ell}\alpha^{2\ell} + \alpha^{2n}\right) \\
&~{}+ \binom{2j+1}{0}h_{j}\left(\frac{c}{2}\right)\\
=&~{} \left(x-\frac{c}{2} + \alpha\right)\left(x-\frac{c}{2} - \alpha\right)\sum_{n=1}^j\binom{2j+1}{2n}h_{j-n}\left(\frac{c}{2}\right)\left(\sum_{\ell =0}^{n-1}\left(x-\frac{c}{2} \right)^{2n-2-2\ell}\alpha^{2\ell}\right) \\
&~{}+\sum_{n=1}^j\binom{2j+1}{2n}h_{j-n}\left(\frac{c}{2}\right)\alpha^{2n}+ \binom{2j+1}{0}h_{j}\left(\frac{c}{2}\right),
\end{aligned}\]
which proves \eqref{eq:h_j}.

We use, again,  the mathematical induction to prove \eqref{eq:induction}. When $n=1$, we  obtain
\begin{equation}\label{eq:simple}
\left(x-\frac{c}{2}\right)^2 = \left(x-\frac{c}{2}-\alpha\right)\left(x-\frac{c}{2}+\alpha\right) + \alpha^2.
\end{equation}
Assume that \eqref{eq:induction} is true for $n=m-1$. Then, using \eqref{eq:simple} and the induction hypothesis, we have
\[\begin{aligned}
\left(x-\frac{c}{2}\right)^{2m} =&~{} \left(\left(x-\frac{c}{2}-\alpha\right)\left(x-\frac{c}{2}+\alpha\right)+\alpha^2\right)\left(x-\frac{c}{2}\right)^{2m-2} \\
=&~{} \left(x-\frac{c}{2}-\alpha\right)\left(x-\frac{c}{2}+\alpha\right)\left(x-\frac{c}{2}\right)^{2m-2} \\
&~{}+ \alpha^2\left( \left(x-\frac{c}{2} + \alpha\right)\left(x-\frac{c}{2} - \alpha\right)\sum_{\ell =0}^{m-2}\left(x-\frac{c}{2} \right)^{2m-4-2\ell}\alpha^{2\ell} + \alpha^{2m-2}\right)\\
=&~{} \left(x-\frac{c}{2}-\alpha\right)\left(x-\frac{c}{2}+\alpha\right)\sum_{\ell =0}^{m-1}\left(x-\frac{c}{2} \right)^{2m-2-2\ell}\alpha^{2\ell} + \alpha^{2m},
\end{aligned}\]
which proves \eqref{eq:induction}.
\end{proof}

\begin{remark}
Collecting all information in Lemma \ref{lem:elementary>0}, one can roughly sketch the shape of $h_j(x)$ defined by \eqref{eq:h_j(x)} as in Figure \ref{Fig:0}:
\end{remark}
\begin{figure}[h!]
\begin{center}
\begin{tikzpicture}[scale=0.95]
\draw[->] (-0.5,0) -- (5,0) node[below] {$x$};
\draw[->] (0,-0.5) -- (0,5) node[left] {$h_j(x)$};
\draw[scale=0.5, domain=0:6, smooth, variable=\x] plot ({\x}, {(\x-3)*(\x-3) + 1.5});
\node at (1.5,0){$\bullet$};
\node at (0,0.75){$\bullet$};
\node at (1.5,-0.5){$\frac{c}{2}$};
\node at (-0.7,0.75){$h_j(\frac{c}{2})$};
\draw[thick,dashed] (1.5,0) -- (1.5,0.75);
\draw[thick,dashed] (0,0.75)--(1.5,0.75);
\end{tikzpicture}
\end{center}
\caption{Lemma \ref{lem:elementary>0} describes that $h_j(x)$ has a convex and symmetric form, and its slope is strictly increasing if $x > \frac{c}{2}$, while strictly decreasing otherwise.} \label{Fig:0}
\end{figure}
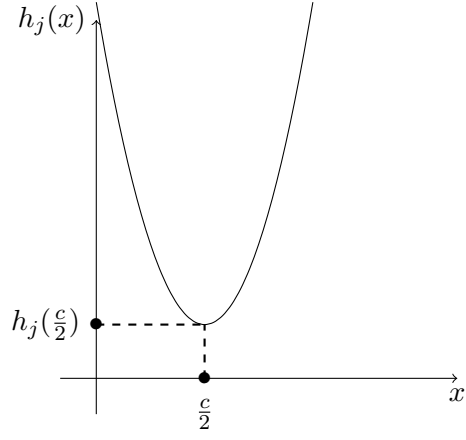

\begin{proof}[Proof of Lemma \ref{strichartz_estimates}]
The proof basically follows the proof of Lemma 2.1 in \cite{TT2004} associated with the Airy flow. Moreover, we refer to \cite{Kwak2019} for the case when $j=2$. Thus, in the proof below, we fix $j \ge 3$.

Let $f = f_1 + f_2$, where
\[\wh{f}_1(k) = 0, \quad \mbox{if} \quad |k| > 1.\]
Note that $|\{k \in \Z : k \in \supp (\wh{f}_1)\}| = 3$. Since $f^2 \le 2f_1^2 + 2f_2^2$, it suffices to treat $\norm{f_1^2}_{L^2(\R \times \T)}$ and $\norm{f_2^2}_{L^2(\R \times \T)}$ separately.

\vspace{0.2cm}

\textbf{$f_1^2$ case.} A computation gives
\begin{equation}\label{eq:L^4_1}
\norm{f_1^2}_{L^2(\R \times \T )}^2 \le \sum_{k\in\Z} \int_{\R} \left| \sum_{k_1\in\Z}\int_\R  |\wt{f}_1(\tau_1,k_1)||\wt{f}_1(\tau -\tau_1,k-k_1)|\; d\tau_1 \right|^2 \; d\tau .
\end{equation}
From the support property, the right-hand side of \eqref{eq:L^4_1} vanishes unless $|k| \le 2j$. Let
\[\wt{F}_1(\tau, k) = \bra{\tau- k^{2j+1}}^{b}|\wt{f}_1(\tau,k)|.\]
The Cauchy-Schwarz inequality and the Minkowski inequality, we see that for $b > \frac14$,
\[\begin{aligned}
\mbox{RHS of } \eqref{eq:L^4_1} \lesssim&~{}\sum_{\substack{k \in \Z  \\ |k| \le 2}} \int_{\R} \Bigg( \sum_{k_1 \in \Z}\left(\int_{\R} \bra{\tau-\tau_1 - (k-k_1)^{2j+1}}^{-2b}\bra{\tau_1- k_1^{2j+1}}^{-2b} \; d\tau_1 \right)^{\frac12}\\
& \hspace{7em} \times \left(\int_\R  |\wt{F}_1(\tau_1,k_1)|^2|\wt{F}_1(\tau -\tau_1,k-k_1)|^2\; d\tau_1\right)^{\frac12}  \Bigg)^2 \; d\tau \\
\lesssim&~{} \sum_{\substack{k \in \Z  \\ |k| \le 2}} \Bigg(\sum_{k_1 \in \Z }\left(\int_{\R^2}  |\wt{F}_1(\tau_1,k_1)|^2|\wt{F}_1(\tau -\tau_1,k-k_1)|^2\; d\tau_1 d \tau \right)^{\frac12}\Bigg)^2\\
\lesssim&~{} \norm{f_1}_{X^{0,b}}^4 \lesssim \norm{f}_{X^{0,b}}^4.
\end{aligned}\]

\vspace{0.2cm}

\textbf{$f_2^2$ case.} Analogous to \eqref{eq:L^4_1}, we have
\[
\begin{aligned}
\norm{f_2^2}_{L^2(\R \times \T )}^2 \le&~{} \sum_{k \in \Z } \int_{\R} \left| \sum_{k_1 \in \Z }\int_\R  |\wt{f}_2(\tau_1,k_1)||\wt{f}_2(\tau -\tau_1,k-k_1)|\; d\tau_1  \right|^2 \; d\tau\\
=&~\sum_{\substack{k \in \Z  \\ |k| \le 1}} \int_{\R} \left| \sum_{k_1 \in \Z }\int_\R  |\wt{f}_2(\tau_1,k_1)||\wt{f}_2(\tau -\tau_1,k-k_1)|\; d\tau_1  \right|^2 \; d\tau \\
&+\sum_{\substack{k \in \Z  \\ |k| >1}} \int_{\R} \left| \sum_{k_1 \in \Z }\int_\R  |\wt{f}_2(\tau_1,k_1)||\wt{f}_2(\tau -\tau_1,k-k_1)|\; d\tau_1 \right|^2 \; d\tau \\
=:&~{} I_1 + I_2.
\end{aligned}\]
The term $I_1$ can be treated similarly as \textbf{$f_1^2$ case}. For the term $I_2$, we may assume that $k_1 > 1$ and $k-k_1>1$ (thus, $k > 1$). Indeed, let $f_2 = f_{2,1} + f_{2,2}$, where
\[\wh{f}_{2,1}(k) = 0 \quad \mbox{if} \quad k > 1,\]
then $\norm{f_2^2}_{L^2}^2 \le 2\norm{f_{2,1}^2}_{L^2}^2 + 2\norm{f_{2,2}^2}_{L^2}^2$ and $\norm{f_{2,1}^2}_{L^2} = \norm{\overline{f_{2,1}}^2}_{L^2} = \norm{f_{2,2}^2}_{L^2}$. Similarly as \eqref{eq:L^4_1}, we have
\[\begin{aligned}
I_2 \lesssim&~{}\sum_{\substack{k \in \Z  \\ k >1}} \int_{\R} \Bigg( \Bigg(\sum_{\substack{k_1 \in \Z  \\ k_1, k-k_1 > 1}}\int_\R \bra{\tau-\tau_1 -(k-k_1)^{2j+1}}^{-2b}\bra{\tau_1- k_1^{2j+1}}^{-2b} \; d\tau_1 \Bigg)^{\frac12}\\
& \hspace{7em} \times\Bigg(\sum_{k_1 \in \Z }\int_\R  |\wt{F}_2(\tau_1,k_1)|^2|\wt{F}_2(\tau -\tau_1,k-k_1)|^2\; d\tau_1\Bigg)^{\frac12}  \Bigg)^2 \; d\tau \\
\lesssim&~{} M\norm{f_2}_{X^{0,b}}^4,
\end{aligned}\]
where
\[M = \sup_{\substack{\tau \in \R, k \in \Z  \\ k >1}}\sum_{\substack{k_1 \in \Z  \\ k_1, k-k_1 > 1}}\int_\R \bra{\tau-\tau_1 - (k-k_1)^{2j+1}}^{-2b}\bra{\tau_1- k_1^{2j+1}}^{-2b} \; d\tau_1.\]
Thus, it is enough to show that $M \lesssim 1$ whenever $b > \frac{j+1}{2(2j+1)}$.

A direct computation
\[\int_{\R} \bra{a}^{-\alpha} \bra{b-a}^{-\alpha} \; da \lesssim \bra{b}^{1-2\alpha},\]
for $\frac12 < \alpha < 1$, yields
\[M \lesssim \sup_{\substack{\tau \in \R, k \in \Z  \\ k > 1}}  \sum_{\substack{k_1 \in \Z  \\ k_1, k-k_1 > 1}} \bra{\tau-k_1^{2j+1} - (k-k_1)^{2j+1}}^{1-4b}.\]

For each $\tau \in \R$ and $k \in \Z$ with $k > 1$, let $h(x) := h_j(x) - \tau$, for $h_j$ as in \eqref{eq:h_j(x)} with $c = k$. From \eqref{eq:h_j-1}, we know
\[h(x) = \sum_{n=1}^j\binom{2j+1}{2n}h_{j-n}\left(\frac{k}{2}\right)\left(x-\frac{k}{2}\right)^{2n} + h_j\left(\frac{k}{2}\right) - \tau.\]
From Lemma \ref{lem:elementary>0} Item (5), we know $h_j\left(\frac{k}{2}\right) - \tau$ is the absolute minimum value of $h$. If $h_j\left(\frac{k}{2}\right) - \tau \ge 0$, we know
\[\langle h(k_1) \rangle^{1-4b} \le \left((2j+1)h_{0}\left(\frac{k}{2}\right)\left(k_1-\frac{k}{2}\right)^{2j}\right)^{1-4b} =\left( (2j+1)k\left(k_1-\frac{k}{2}\right)^{2j}\right)^{1-4b} \quad \mbox{on} \;\; A^c,\]
where the set
\begin{equation}\label{eq:setA}
A = \left\{k_1 \in \Z : \left|k_1 - \frac{k}{2}\right| \le 1\right\}.
\end{equation}
Note that $|A| \le 3$. Thus,
\[\begin{aligned}
\sum_{\substack{k_1 \in \Z  \\ k_1, k-k_1 > 1}} \bra{\tau-k_1^{2j+1} - (k-k_1)^{2j+1}}^{1-4b} \lesssim&~{} \sum_{k_1 \in A} 1 + \sum_{\substack{k_1 \in A^c \\ k_1, k-k_1 > 1}}\left(k\left(k_1-\frac{k}{2}\right)^{2j}\right)^{1-4b}\\
\lesssim&~{} 1 + \sum_{k > 1}k^{(2j+1)(1-4b)} + \sum_{|k_1-\frac{k}{2}| > 1}\left|k_1-\frac{k}{2}\right|^{(2j+1)(1-4b)}\\
\lesssim&~{} 1,
\end{aligned}\]
provided that $b > \frac{j+1}{2(2j+1)}$.

On the other hand, if $h_j\left(\frac{k}{2}\right) - \tau < 0$, since $h$ is symmetry about $x = \frac{k}{2}$ and has the absolute minimum value at $\frac{k}{2}$, there is $\alpha > 0$ such that $\frac{k}{2} + \alpha$ and $\frac{k}{2} - \alpha$ are the only roots of $h$, i.e., $h(\frac{k}{2} + \alpha) = h(\frac{k}{2} - \alpha) = 0$. A direct computation gives
\[0 = h\left(\frac{k}{2} + \alpha\right) = h\left(\frac{k}{2} - \alpha\right) = \sum_{n=0}^j\binom{2j+1}{2n}h_{j-n}\left(\frac{k}{2}\right)\alpha^{2n} - \tau.\]
With this, by Lemma \ref{lem:elementary>0} Item (6), we know
\[h(x) = \left(x-\frac{k}{2} + \alpha\right)\left(x-\frac{k}{2} - \alpha\right)\sum_{n=1}^j\binom{2j+1}{2n}h_{j-n}\left(\frac{k}{2}\right)\left(\sum_{\ell =0}^{n-1}\left(x-\frac{k}{2} \right)^{2n-2-2\ell}\alpha^{2\ell}\right).\]
Let $\Omega_{\pm}$ be a set of $k_1$ defined by
\[\Omega_{\pm} = \left\{k_1 \in \Z : \left|k_1 - \frac{k}{2} \pm \alpha\right| \right\}.\]
Note that $|\Omega_{\pm}| \le 3$. Then, similarly as before, we have
\[\begin{aligned}
\langle h(k_1) \rangle^{1-4b} \le&~{} \left((2j+1)h_{0}\left(\frac{k}{2}\right)\left(k_1-\frac{k}{2} + \alpha\right)\left(k_1-\frac{k}{2} - \alpha\right)\left(k_1-\frac{k}{2}\right)^{2j-2}\right)^{1-4b} \\
=&~{}\left( (2j+1)k\left|k_1-\frac{k}{2} + \alpha\right|\left|k_1-\frac{k}{2} - \alpha\right|\left(k_1-\frac{k}{2}\right)^{2j-2}\right)^{1-4b}
\end{aligned}\]
on $ (\Omega_+ \cup \Omega_- \cup A)^c$, where the set $A$ is as in \eqref{eq:setA}. Therefore, we conclude that
\[\begin{aligned}
&~{}\sum_{\substack{k_1 \in \Z  \\ k_1, k-k_1 > 1}} \bra{\tau-k_1^{2j+1} - (k-k_1)^{2j+1}}^{1-4b}\\
 \lesssim&~{} \sum_{k > 1}k^{(2j+1)(1-4b)} + \sum_{|k_1-\frac{k}{2} + \alpha| > 1}\left|k_1-\frac{k}{2} + \alpha\right|^{(2j+1)(1-4b)}\\
&~{}+ \sum_{|k_1-\frac{k}{2} - \alpha| > 1}\left|k_1-\frac{k}{2} - \alpha\right|^{(2j+1)(1-4b)} + \sum_{|k_1-\frac{k}{2}|>1}\left|k_1-\frac{k}{2}\right|^{(2j+1)(1-4b)}\\
\lesssim&~{} 1,
\end{aligned}\]
provided that $b > \frac{j+1}{2(2j+1)}$.
This completes the proof.
\end{proof}

\section{Propagation of singularities and unique continuation property}\label{Apendice}
In this section, we provide necessary basic tools that we use to demonstrate the main results of this work. 

\subsection{Auxiliary lemmas}We first recall three useful lemmas for our analyses below.
\begin{lemma}\cite[Lemma A.1]{PV}\label{lem:weight}
A function $\phi \in C^{\infty}(\T)$ can be written in the form $\partial_x \varphi$ for some function $\varphi \in C^{\infty}(\T)$ if and only if
$\int_\T \phi(x) \; dx = 0$.
\end{lemma}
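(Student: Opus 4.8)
The plan is to prove both implications directly: the easy direction follows from the fundamental theorem of calculus, and the substantive direction proceeds by explicitly constructing the antiderivative.

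For the necessity, I would assume $\phi = \partial_x \varphi$ with $\varphi \in C^\infty(\T)$ and integrate over the circle. Since $\varphi$ is $2\pi$-periodic, the fundamental theorem of calculus gives $\int_\T \partial_x \varphi \, dx = 0$, hence $\int_\T \phi \, dx = 0$ at once. This direction requires no work.

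For the sufficiency, suppose $\int_\T \phi \, dx = 0$, which in terms of the spatial Fourier transform $\widehat{\phi}(k) = \frac{1}{\sqrt{2\pi}}\int_\T e^{-ikx}\phi(x)\,dx$ introduced above means precisely $\widehat{\phi}(0) = 0$. I would then define $\varphi$ through its Fourier coefficients by $\widehat{\varphi}(k) := \widehat{\phi}(k)/(ik)$ for $k \neq 0$ and $\widehat{\varphi}(0) := 0$. Since $\partial_x$ acts as multiplication by $ik$ on the $k$-th Fourier mode, the coefficients of $\partial_x \varphi$ are $ik\,\widehat{\varphi}(k) = \widehat{\phi}(k)$ for every $k$ (including $k=0$, as $\widehat{\phi}(0)=0$), so $\partial_x \varphi = \phi$ once $\varphi$ is shown to be a genuine smooth function.

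The only point that needs verification — the closest thing to an obstacle, though it is routine — is the smoothness of $\varphi$. Because $\phi \in C^\infty(\T)$, its Fourier coefficients decay faster than any polynomial, i.e.\ $\sup_{k}\langle k\rangle^{N}|\widehat{\phi}(k)| < \infty$ for every $N \in \N$; dividing by $ik$ for $k \neq 0$ only improves this decay, so $\widehat{\varphi}(k)$ enjoys the same rapid decay, and consequently $\varphi(x) = \frac{1}{\sqrt{2\pi}}\sum_{k}\widehat{\varphi}(k)e^{ikx}$ converges to a $C^\infty(\T)$ function. This completes the construction. Alternatively, one may avoid Fourier analysis entirely and set $\varphi(x) := \int_0^x \phi(y)\,dy$, which is smooth since $\phi$ is, and which is $2\pi$-periodic precisely because $\varphi(2\pi) - \varphi(0) = \int_\T \phi\,dx = 0$, yielding $\partial_x \varphi = \phi$ directly.
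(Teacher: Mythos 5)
Your proof is correct. Note, however, that the paper does not actually prove this lemma: it is quoted verbatim from \cite[Lemma A.1]{PV}, so there is no in-paper argument to compare against; your write-up is a self-contained substitute for the citation. Both of your routes are the standard ones and both are sound. The Fourier route is the one most in the spirit of the paper (it uses the same conventions, $\widehat{\phi}(0)=0$ encoding the mean-zero condition, and the rapid decay characterization of $C^{\infty}(\T)$), while the antiderivative route $\varphi(x)=\int_0^x \phi(y)\,dy$ is more elementary. One tiny point worth making explicit in the second route: periodicity of $\varphi$ should hold at every point, not just at $x=0$; this is immediate since $\varphi(x+2\pi)-\varphi(x)=\int_x^{x+2\pi}\phi(y)\,dy$ equals $\int_\T \phi\,dx=0$ for all $x$ (the integrand being $2\pi$-periodic), after which $\varphi$ descends to a genuine element of $C^{\infty}(\T)$ with $\partial_x\varphi=\phi$.
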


\begin{lemma}\cite[Lemma A.1.]{Laurent-esaim}\label{lem:La1}
Let $s, r \in \R$. Let $f$ denote the operator of multiplication by $f \in C^{\infty}(\T)$. Then, $[D^r,f]$ maps any $H^s$ into $H^{s-r+1}$, where $D^r$ operator is defined on distributions $\mathcal D'(\T)$ by
\[\widehat{D^r f}(n) = \begin{cases} \mbox{sgn}(n) |n|^r \hat{f}(n), \quad &\mbox{if} \;\; n \neq 0, \\  \hat{f}(0), \quad &\mbox{if} \;\; n = 0.\end{cases}\]
\end{lemma}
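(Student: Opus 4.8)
The plan is to work entirely on the Fourier side and reduce the claimed mapping property to the $L^2$-boundedness of a single matrix operator, which I would then dispatch with Schur's test. Writing $\sigma_r$ for the Fourier multiplier of $D^r$, i.e. $\sigma_r(n) = \sgn(n)\abs{n}^r$ for $n \neq 0$ and $\sigma_r(0)=1$, a direct computation of Fourier coefficients gives (up to a fixed constant)
\[
\wh{[D^r,f]u}(n) = \sum_{m \in \Z} \wh{f}(n-m)\big(\sigma_r(n) - \sigma_r(m)\big)\wh{u}(m),
\]
so that, by Plancherel, the estimate $\norm{[D^r,f]u}_{H^{s-r+1}} \lesssim \norm{u}_{H^s}$ is equivalent to the boundedness on $\ell^2(\Z)$ of the operator with matrix kernel
\[
T(n,m) = \bra{n}^{s-r+1}\,\wh{f}(n-m)\,\big(\sigma_r(n)-\sigma_r(m)\big)\,\bra{m}^{-s}.
\]
First I would record the two ingredients that make this kernel summable: the smoothness of $f$, giving the rapid decay $\abs{\wh{f}(n-m)} \lesssim_N \bra{n-m}^{-N}$ for every $N$, and the Peetre-type inequality $\bra{n}^{a} \lesssim \bra{m}^{a}\bra{n-m}^{\abs a}$, which lets me trade the weight $\bra{n}$ for $\bra{m}$ at the cost of powers of $\bra{n-m}$.

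The heart of the argument, and the step I expect to be the main obstacle, is the symbol-difference estimate
\[
\abs{\sigma_r(n) - \sigma_r(m)} \lesssim_r \abs{n-m}\,\big(\bra{n}+\bra{m}\big)^{r-1},
\]
in which the gain of one derivative is encoded by the factor $\abs{n-m}$ (it will eventually be paired against the smoothness of $f$). To prove it I would split into cases. When $n$ and $m$ have the same sign I would set $a=\abs n \geq b = \abs m$ and use the fundamental theorem of calculus, $\abs{a^r-b^r} = \abs r\,\big|\int_b^a t^{r-1}\,dt\big|$; writing $t=a/b$ reduces the claim to the elementary one-variable inequality $\abs{t^r-1}\lesssim_r (t-1)(t+1)^{r-1}$ for $t\geq 1$ (check the endpoints $t\to 1$ and $t\to\infty$, and continuity in between). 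When $n$ and $m$ have opposite signs one has $\abs{n-m}=\abs n+\abs m$ and $\abs{\sigma_r(n)-\sigma_r(m)} = \abs n^r+\abs m^r \lesssim (\abs n+\abs m)^r$, which is exactly the desired bound. The exceptional index $n=0$ or $m=0$, where $\sigma_r$ equals $1$ rather than $\sgn\abs{\cdot}^r$, contributes only a rank-one correction and is harmless; and for $r<0$, where the displayed symbol bound can degrade when $\abs{n-m}$ is large, the rapid decay of $\wh f$ already dominates (reflecting that $D^r$ is then smoothing), so no separate treatment is needed.

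Finally I would close with Schur's test. Combining the symbol estimate, the weight-shifting inequality, and the decay of $\wh f$, the weights $\bra m$ cancel exactly and each entry obeys
\[
\abs{T(n,m)} \lesssim \bra{n-m}^{\,1 + \abs{s-r+1} + \abs{r-1} - N},
\]
where the three polynomial exponents come, respectively, from the factor $\abs{n-m}$, from moving $\bra n^{s-r+1}$ onto $\bra m$, and from moving $(\bra n+\bra m)^{r-1}$ onto $\bra m$. Choosing $N$ large makes $\sup_n \sum_m \abs{T(n,m)}$ and $\sup_m \sum_n \abs{T(n,m)}$ both finite, so Schur's test yields the $\ell^2$-boundedness of $T$ and hence the stated mapping $[D^r,f]\colon H^s \to H^{s-r+1}$.
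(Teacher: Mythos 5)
The paper does not actually prove this lemma; it is quoted verbatim from the cited reference (Laurent, ESAIM COCV 2010, Lemma A.1), so the only meaningful comparison is with the standard argument — which is exactly what you reconstruct: pass to Fourier coefficients, reduce the mapping property to $\ell^2$-boundedness of the kernel $T(n,m)=\bra{n}^{s-r+1}\wh f(n-m)\left(\sigma_r(n)-\sigma_r(m)\right)\bra{m}^{-s}$, estimate the symbol difference by the mean value theorem, shift weights with Peetre's inequality, and close with Schur's test. For $r\ge 0$ your argument is complete and correct: the reduction to the one-variable inequality $\abs{t^r-1}\lesssim_r (t-1)(t+1)^{r-1}$, $t\ge 1$, is valid there, the opposite-sign case and the exceptional frequencies $n=0$, $m=0$ are handled correctly, and the Schur bookkeeping (exact cancellation of the $\bra{m}$ weights, final exponent $1+\abs{s-r+1}+\abs{r-1}-N$) is right.

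The place that needs more than a clause is $r<0$, and this case genuinely matters for the paper, which applies the lemma to operators like $D^{-2j}$ and $D^{s-2j}$. As you correctly observe, the displayed symbol bound is false there (take $r=-1$, $m=1$, $n\to\infty$: the left side tends to $1$, the right side to $0$), but the conclusion "so no separate treatment is needed" is too quick. If you simply insert the trivial bound $\abs{\sigma_r(n)-\sigma_r(m)}\lesssim 1$ into your Schur computation, the weights no longer cancel and you are left with a growing factor $\bra{m}^{1-r}$, so the displayed final kernel estimate is not valid for $r<0$ as it stands. The repair is short but it is a genuine region split: the symbol bound can only fail when $\min(\abs n,\abs m)\ll\max(\abs n,\abs m)$, and in that case $\abs{n-m}\ge\tfrac12\max(\abs n,\abs m)$, hence $\bra{n-m}\gtrsim\bra{m}$; on this "far" region the leftover $\bra{m}^{1-r}$ is absorbed into $\bra{n-m}^{1-r}$ and the factor $\bra{n-m}^{-N}$ still closes the Schur test, while on the complementary "near" region $\abs{n-m}\le\tfrac12\max(\abs n,\abs m)$ the two frequencies are automatically of the same sign and comparable in size, so the mean value theorem gives $\abs{\sigma_r(n)-\sigma_r(m)}\lesssim_r\abs{n-m}\,\min(\abs n,\abs m)^{r-1}\lesssim_r\abs{n-m}\left(\bra n+\bra m\right)^{r-1}$ for every $r\in\R$. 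With that two- or three-line addition your proof is complete for all $r$, and it is in substance the proof of the cited reference.
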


\begin{lemma}\cite[Lemma A.3.]{Laurent-esaim}\label{lem:La2}
Let $s \in\R$. Let $f \in C^{\infty}(\T)$ and $\rho_{\vep} = e^{\vep^2\partial_x^2}$ with $0 \le \vep \le 1$. Then, $[\rho_{\vep}, f]$ is uniformly bounded as an operator from $H^s$ into $H^{s+1}$.
\end{lemma}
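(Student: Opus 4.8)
The plan is to pass to the Fourier side and realise the commutator as an infinite matrix acting on Fourier coefficients, then show that the associated symbol gains exactly one derivative uniformly in $\vep$. Since $\rho_\vep = e^{\vep^2\partial_x^2}$ acts on the orthonormal basis $\{e^{inx}\}$ by the Fourier multiplier $e^{-\vep^2 n^2}$, while multiplication by $f$ acts as convolution with $\wh f$, a direct computation gives, for $g \in H^s(\T)$,
\[
\widehat{[\rho_\vep,f]g}(n) = \sum_{m\in\Z}\wh f(n-m)\bigl(e^{-\vep^2 n^2}-e^{-\vep^2 m^2}\bigr)\wh g(m).
\]
Thus it suffices to show that the matrix with entries $K_\vep(n,m) = \wh f(n-m)\,(e^{-\vep^2 n^2}-e^{-\vep^2 m^2})$, after conjugation by $\bra{D}^{s+1}$ on the left and $\bra{D}^{-s}$ on the right, is bounded on $\ell^2(\Z)$ with a bound independent of $\vep\in[0,1]$.

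The heart of the matter is the pointwise estimate
\[
\bigl|e^{-\vep^2 n^2}-e^{-\vep^2 m^2}\bigr| \lesssim \frac{\bra{n-m}}{\bra{n}}, \qquad \forall\, n,m\in\Z,\ \vep\in[0,1],
\]
which encodes the one-derivative smoothing and, crucially, is uniform in $\vep$. I would prove it by a case distinction (taking $|n|\ge|m|$ by symmetry). If $|n-m|\ge |n|/2$ the right-hand side is $\gtrsim 1$ while the left-hand side is trivially $\le 1$. If $|n-m| < |n|/2$, then $|m|$ is comparable to $|n|$, and the mean value theorem applied to $t\mapsto e^{-\vep^2 t^2}$ produces a factor $2\vep^2|\xi|e^{-\vep^2\xi^2}|n-m|$ with $\xi$ between $m$ and $n$, so $|\xi|\approx|n|$; writing $u=\vep|n|$ this equals $\frac{|n-m|}{|n|}\cdot u^2 e^{-cu^2}$ up to constants, and $\sup_{u\ge 0} u^2 e^{-cu^2} <\infty$ closes the estimate uniformly in $\vep$. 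This intermediate regime $\vep|n|\sim 1$ is exactly where the two naive bounds (the trivial $\le 1$ and the raw mean-value bound $\lesssim \vep|n-m|$) each fail on their own, so combining them through the quantity $u^2 e^{-cu^2}$ is the main technical point of the argument.

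With the key estimate in hand, the conclusion follows from standard symbolic manipulations. Conjugating, the entries are bounded by $\bra{n}^{s+1}\bra{m}^{-s}|\wh f(n-m)|\,\bra{n-m}\bra{n}^{-1} = \bra{n}^{s}\bra{m}^{-s}\bra{n-m}\,|\wh f(n-m)|$, and Peetre's inequality $\bra{n}^s\bra{m}^{-s}\lesssim\bra{n-m}^{|s|}$ turns this into $\bra{n-m}^{|s|+1}|\wh f(n-m)|$, a function of $n-m$ alone. Since $f\in C^\infty(\T)$, its Fourier coefficients decay faster than any polynomial, so this function lies in $\ell^1(\Z)$; by Schur's test (equivalently, Young's inequality for the resulting convolution structure) the conjugated matrix is bounded on $\ell^2(\Z)$ with norm controlled by $\sum_{p\in\Z} \bra{p}^{|s|+1}|\wh f(p)|$, which is finite and independent of $\vep$. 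This yields $\norm{[\rho_\vep,f]g}_{H^{s+1}}\lesssim\norm{g}_{H^s}$ uniformly in $\vep\in[0,1]$, as claimed.
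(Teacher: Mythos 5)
Your proof is correct. Note, however, that this paper contains no proof of the lemma to compare against: it is imported verbatim by citation from Laurent \cite[Lemma A.3]{Laurent-esaim}, and your argument — the Fourier-multiplier representation of $[\rho_\vep,f]$, the uniform bound $|e^{-\vep^2n^2}-e^{-\vep^2m^2}|\lesssim \langle n-m\rangle/\langle n\rangle$ obtained from the mean value theorem together with $\sup_{u\ge0}u^2e^{-cu^2}<\infty$, and then Peetre's inequality plus Schur/Young using the rapid decay of $\wh f$ — is precisely the standard route taken in that reference, so your write-up serves as a correct, self-contained substitute for the omitted proof. One small stylistic point: the reduction ``by symmetry to $|n|\ge|m|$'' deserves a word of justification (the bound with the larger frequency in the denominator is the stronger one, hence implies the asymmetric estimate you actually need), though as written your two cases in fact cover all $n,m$ without invoking that reduction.
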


We end this subsection with the multiplication property of $X^{s,b}$ spaces.

\begin{lemma}[Multiplication property]\label{estimates_a}
Let $-1\leq b\leq1$, $s\in\mathbb{R}$ and $\varphi\in C^{\infty}(\T)$. Then, $\varphi u \in X^{s-2j|b|, b}$,  for any $u\in X^{s,b}$. Moreover, the map $u \mapsto \varphi u$ from $X_T^{s,b}$ into $X_T^{s-2j|b|, b}$ is bounded.
\end{lemma}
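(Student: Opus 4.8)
The plan is to reduce everything to the global statement $\norm{\varphi u}_{X^{s-2j|b|,b}}\lesssim\norm{u}_{X^{s,b}}$ and to argue entirely on the Fourier side, the time-localized bound then following by a routine restriction--extension argument. Since $\varphi=\varphi(x)$ is independent of $t$, multiplication by $\varphi$ becomes a convolution in the frequency variable $k$ alone:
\[
\wt{\varphi u}(\tau,k)=\sum_{m\in\Z}\wh\varphi(m)\,\wt u(\tau,k-m).
\]
The whole point is to compare the weight $\bra{k}^{s-2j|b|}\bra{\tau-k^{2j+1}}^{b}$ attached to $\varphi u$ at frequency $k$ with the weight $\bra{k-m}^{s}\bra{\tau-(k-m)^{2j+1}}^{b}$ attached to $u$ at the shifted frequency $k'=k-m$. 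Three elementary tools are needed: the bracket inequality $\bra{a+b}\lesssim\bra{a}\bra{b}$, Peetre's inequality $\bra{k}^{s}\lesssim\bra{m}^{|s|}\bra{k-m}^{s}$ (valid for every real $s$), and --- this is the crucial estimate --- the phase bound
\[
\bigl|k^{2j+1}-(k-m)^{2j+1}\bigr|\lesssim \bra{m}^{2j+1}\bra{k}^{2j},
\]
obtained by expanding the binomial, whose leading term is $(2j+1)mk^{2j}$.

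I will treat $b\ge0$ first. Writing $k'=k-m$ and combining the bracket inequality with the phase bound gives $\bra{\tau-k^{2j+1}}\lesssim\bra{\tau-(k')^{2j+1}}\bra{m}^{2j+1}\bra{k}^{2j}$; raising to the power $b\ge0$ and multiplying by $\bra{k}^{s-2jb}$, the factor $\bra{k}^{2jb}$ is exactly absorbed into $\bra{k}^{s}$, which in turn is controlled by $\bra{m}^{|s|}\bra{k'}^{s}$ via Peetre. Hence, setting $G(\tau,k'):=\bra{k'}^{s}\bra{\tau-(k')^{2j+1}}^{b}|\wt u(\tau,k')|$ and $c_m:=|\wh\varphi(m)|\bra{m}^{(2j+1)b+|s|}$, one obtains the pointwise domination $\bra{k}^{s-2jb}\bra{\tau-k^{2j+1}}^{b}|\wt{\varphi u}(\tau,k)|\lesssim\sum_{m}c_m\,G(\tau,k-m)$. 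Since $\varphi\in C^\infty(\T)$, its Fourier coefficients $\wh\varphi(m)$ decay faster than any polynomial, so $(c_m)\in\ell^1(\Z)$; Minkowski's inequality in $L^2_\tau\ell^2_k$ together with translation invariance of the $\ell^2_k$ norm then yields $\norm{\varphi u}_{X^{s-2jb,b}}\lesssim\norm{c}_{\ell^1}\norm{u}_{X^{s,b}}$. This settles $b\ge0$, and it transparently shows that the loss of $2j|b|$ derivatives is forced precisely by the $\bra{k}^{2j}$ growth in the phase bound.

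For $b<0$ I will use duality: $X^{\sigma,b}$ is the dual of $X^{-\sigma,-b}$ under the $L^2_{t,x}$ pairing $\langle f,g\rangle$, and multiplication is formally self-adjoint, $\langle\varphi u,w\rangle=\langle u,\ol\varphi w\rangle$ with $\ol\varphi\in C^\infty(\T)$. Thus
\[
\norm{\varphi u}_{X^{s-2j|b|,b}}=\sup_{\norm{w}_{X^{-s+2j|b|,|b|}}\le1}\bigl|\langle u,\ol\varphi w\rangle\bigr|\le\norm{u}_{X^{s,b}}\,\sup\,\norm{\ol\varphi w}_{X^{-s,|b|}},
\]
and the already proved positive-exponent case (applied with exponent $|b|$) gives $\norm{\ol\varphi w}_{X^{-s,|b|}}\lesssim\norm{w}_{X^{-s+2j|b|,|b|}}\le1$, which closes the estimate. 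Finally, for the localized spaces, given $u\in X_T^{s,b}$ I choose $g\in X^{s,b}$ with $g=u$ on $(0,T)\times\T$ and $\norm{g}_{X^{s,b}}$ nearly optimal; then $\varphi g=\varphi u$ on $(0,T)\times\T$, so $\norm{\varphi u}_{X_T^{s-2j|b|,b}}\le\norm{\varphi g}_{X^{s-2j|b|,b}}\lesssim\norm{g}_{X^{s,b}}$, and taking the infimum over admissible $g$ gives the localized bound. The only genuinely nontrivial ingredient is the phase estimate above; everything else is standard harmonic-analysis bookkeeping.
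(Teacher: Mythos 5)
Your proposal is correct, but it takes a genuinely different route from the paper for the core case $0\le b\le 1$. The paper argues operator-theoretically at the two endpoints: for $b=0$ the claim is immediate from $X^{s,0}=L^2(\R;H^s)$, and for $b=1$ it uses the characterization of $X^{s,1}$ in terms of $u$ and $\left(\partial_t+(-1)^{j+1}\partial_x^{2j+1}\right)u$ together with the commutator expansion $[\varphi,(-1)^{j+1}\partial_x^{2j+1}]u=(-1)^j\sum_{\ell=0}^{2j}\binom{2j+1}{\ell}\partial_x^{2j+1-\ell}\varphi\,\partial_x^{\ell}u$, which exhibits the $2j$-derivative loss; the intermediate exponents are then obtained by Stein--Weiss complex interpolation, and negative $b$ by duality, exactly as you do. You instead prove the positive-exponent case in one stroke on the Fourier side, via the convolution formula $\wt{\varphi u}(\tau,k)=\sum_m\wh\varphi(m)\wt u(\tau,k-m)$, the phase bound $|k^{2j+1}-(k-m)^{2j+1}|\lesssim\bra{m}^{2j+1}\bra{k}^{2j}$, Peetre's inequality, and the rapid decay of $\wh\varphi$; your computation is sound (the binomial expansion does give the stated phase bound, and Minkowski plus translation invariance of $\ell^2_k$ closes the estimate). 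Your approach buys several things: it is self-contained (no interpolation machinery), it makes the origin of the $2j|b|$ loss completely transparent as the $\bra{k}^{2j}$ growth of the phase increment, and it actually proves the statement for every $b\ge 0$, not merely $b\le 1$, whereas the paper's interpolation between the endpoints $b=0$ and $b=1$ is intrinsically confined to $|b|\le 1$. What the paper's commutator argument buys is consistency with the pseudo-differential viewpoint exploited later in the propagation results, where the same commutator identity reappears. One further point in your favor: for the time-localized statement the paper only appeals to stability under time cutoffs, while your restriction--extension argument (extend $u$ near-optimally to $g$, note $\varphi g$ extends $\varphi u$, take the infimum) is the precise justification that the restricted-norm bound requires.
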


\begin{proof}
Since $X^{s,b}$ space is stable with respect to time localization (see \eqref{eq:Xsb stable}), it is enough to prove the first part (without time localization). When $b=0$, it is obvious thanks to $X^{s,0} = L^2(\R;H^s)$ (see \cite[Theorem 4.3]{PV}) and
\begin{equation}\label{eq:smooth}
\norm{\varphi u}_{H^s} \lesssim \norm{u}_{H^s},
\end{equation}
where the implicit constant depends on $s$ and $\varphi$.

We now take $b=1$, then it is known that
\begin{equation}\label{eq:Xs1}u \in X^{s,1} \quad \Longleftrightarrow \quad u \in L^2(\R;H^s) \quad \mbox{and} \quad \left(\partial_t + (-1)^{j+1}\partial_x^{2j+1}\right)u \in L^2(\R; H^s),
\end{equation}
thanks to the definition of $X^{s,b}$ and $\bra{\cdot} \sim 1 + |\cdot|$. A computation gives
\begin{equation}\label{eq:commutator}
\left(\partial_t + (-1)^{j+1}\partial_x^{2j+1}\right)(\varphi u) = \varphi \left(\partial_t + (-1)^{j+1}\partial_x^{2j+1}\right) u - [\varphi, (-1)^{j+1}\partial_x^{2j+1}]u,
\end{equation}
where $[\cdot, \cdot]$ is the standard commutator operator defined by $[A,B] = AB - BA$. Thanks to \eqref{eq:commutator} and \eqref{eq:smooth} in the definition \eqref{eq:Xs1}, it suffices to show
\begin{equation}\label{eq:b1}
\norm{[\varphi, (-1)^{j+1}\partial_x^{2j+1}]u}_{H^{s-2j}} \lesssim \norm{u}_{H^s}.
\end{equation}
Observe that
\[[\varphi, (-1)^{j+1}\partial_x^{2j+1}]u = (-1)^j \sum_{\ell = 0}^{2j} \binom{2j+1}{\ell}\partial_x^{2j+1-\ell} \varphi~ \partial_x^{\ell}u.\]
This, in addition to \eqref{eq:smooth}, immediately implies \eqref{eq:b1}. Thus, by the complex interpolation theorem of Stein-Weiss for weighted $L^{p}$ spaces (see \cite[p. 114]{Berg}), we complete the proof for the case when $0 \le b \le 1$.

The case when $ -1 \le b \le 0$ can be proved via the duality argument. Precisely, the duality argument ensures the map $u \mapsto \varphi u$ from $X^{-s+2jb,-b}$ to $X^{-s-b}$ is bounded for $0 \le b \le 1$. Since the spatial regularity is arbitrary, by replacing $-s+2jb$ by $s$, we conclude that multiplication map is bounded from $X^{s,-b}$ to $X^{s-2jb, -b}$, which implies the desired result for $-1 \le b \le 0$, we thus complete the proof.
\end{proof}

\subsection{Propagation of compactness}
In this section, we present the properties of propagation of compactness for the linear differential operator $L=\partial_{t}+ (-1)^{j+1} \px^{2j+1}$ associated with the higher-order KdV type equation. The main ingredient is basically pseudo-differential analysis.

\begin{proposition}\label{prop_a} Let $T>0$ and $0\leq b^{\prime}\leq b\leq1$ be given, with $b>0$. Suppose that $u_{n}\in X_T^{0,b}$ and $f_{n}\in X_T^{-2j + 2jb, -b}$ satisfy
\[
\partial_{t}u_{n}+(-1)^{j+1} \px^{2j+1}u_{n}=f_{n},
\]
for $n \in \N$.  Assume that there exists a constant $C>0$ such that%
\begin{equation}
\left\Vert u_{n}\right\Vert _{X_T^{0,b}}\leq C \label{k14}%
\end{equation}
and that
\begin{equation}
\left\Vert u_{n}\right\Vert _{X_T^{-2j+2jb,-b}}+\left\Vert f_{n}\right\Vert
_{X_T^{-2j+2jb,-b}}+\left\Vert u_{n}\right\Vert _{X_T^{-1+2jb',-b'}}\rightarrow0, \;\; \text{ as }n\rightarrow+\infty\text{.}
\label{k15}%
\end{equation}
In addition, assume that for some nonempty open set $\omega\subset\mathbb{T}$
it holds%
\begin{equation}\label{eq:assumption}
u_{n}\rightarrow0\text{ strongly in }L^2\left(  0,T;L^2\left(
\omega\right)  \right)  \text{.}%
\end{equation}
Then, 
\[
u_{n}\rightarrow0\text{ strongly in }L^2_{loc}\left(  \left(  0,T\right)
;L^2\left(  \mathbb{T}\right)  \right),\;\;\text{as}\;n\rightarrow +\infty.%
\]
\end{proposition}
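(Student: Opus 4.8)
The plan is to run a positive-commutator (microlocal) argument inside the Bourgain-space framework, in the spirit of the propagation results of Laurent \cite{Laurent} and Dehman--Lebeau. The guiding heuristic is that the group velocity $\frac{d}{dk}k^{2j+1}=(2j+1)k^{2j}\geq 0$ has a \emph{fixed sign}, so that the $L^2$-mass of a solution is transported by translation in $x$; since translation acts transitively on $\T$, control on $\omega$ must propagate to all of $\T$. It suffices to prove that for every $\varphi\in C_c^\infty(0,T)$ with $\varphi\geq0$ one has $\int_0^T\varphi(t)\int_\T |u_n|^2\,dx\,dt\to 0$, which is precisely the asserted convergence in $L^2_{loc}((0,T);L^2(\T))$.

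First I fix a multiplier. Choose $\psi\in C^\infty(\T)$ with $\psi\geq0$, $\supp\psi\subset\omega$ and $\psi\not\equiv0$, and set $\langle\psi^2\rangle=\frac1{2\pi}\int_\T\psi^2>0$. Since $\psi^2-\langle\psi^2\rangle$ has zero mean, Lemma \ref{lem:weight} gives $\Phi\in C^\infty(\T)$ with $\partial_x\Phi=\psi^2-\langle\psi^2\rangle$. Let $M_\Phi$ be multiplication by $\Phi$ and define the order $-2j$ operator $B=-\frac{1}{2j+1}M_\Phi\langle D\rangle^{-2j}$, where $\langle D\rangle^{-2j}$ is the Fourier multiplier $\langle k\rangle^{-2j}$. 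Using $\partial_t u_n=f_n-(-1)^{j+1}\partial_x^{2j+1}u_n$ and the skew-adjointness of $\partial_x^{2j+1}$, a direct computation yields the energy identity
\[
\frac{d}{dt}(Bu_n,u_n)=(Bf_n,u_n)+(Bu_n,f_n)+(-1)^{j+1}\big([\partial_x^{2j+1},B]u_n,u_n\big).
\]
I then multiply by $\varphi$, integrate over $(0,T)$ and integrate by parts in $t$, so the left-hand side becomes $-\int_0^T\varphi'(Bu_n,u_n)\,dt$ (no boundary contribution, as $\varphi\in C_c^\infty(0,T)$).

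The decisive point is that the commutator is, to leading order, a multiplication operator. Because $\langle D\rangle^{-2j}$ commutes with $\partial_x^{2j+1}$, one has $[\partial_x^{2j+1},B]=-\frac1{2j+1}[\partial_x^{2j+1},M_\Phi]\langle D\rangle^{-2j}$, and the top term of the classical commutator $[\partial_x^{2j+1},M_\Phi]=\sum_{\ell\geq1}\binom{2j+1}{\ell}(\partial_x^\ell\Phi)\partial_x^{2j+1-\ell}$ produces the leading symbol $(\psi^2-\langle\psi^2\rangle)\,m(k)$ with $m(k)=k^{2j}\langle k\rangle^{-2j}\to1$, while the remaining terms assemble into an operator $R$ of order $\leq-1$. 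Since $m(D)-1$ and $R$ are smoothing, this gives $(-1)^{j+1}([\partial_x^{2j+1},B]u_n,u_n)=\int_\T(\psi^2-\langle\psi^2\rangle)|u_n|^2\,dx+o(1)$. Every error is then killed by the three quantities tending to $0$ in \eqref{k15}, the exponents being exactly calibrated: $\langle D\rangle^{-2j}$ gains $2j$ derivatives while multiplication by a smooth function loses $2j|b|$ (Lemma \ref{estimates_a}), so the boundary term $\int\varphi'(Bu_n,u_n)$ and the source terms $(Bf_n,u_n),(Bu_n,f_n)$ close in the $X_T^{s,b}$ duality against $\|u_n\|_{X_T^{-2j+2jb,-b}}$ and $\|f_n\|_{X_T^{-2j+2jb,-b}}$ (with $u_n$ merely bounded in $X_T^{0,b}$ by \eqref{k14}), whereas the order $-1$ remainder $(Ru_n,u_n)$ is absorbed by $\|u_n\|_{X_T^{-1+2jb',-b'}}$; the time cutoff is handled by \eqref{eq:Xsb stable}.

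Collecting everything gives $\int_0^T\varphi\int_\T(\psi^2-\langle\psi^2\rangle)|u_n|^2\to0$. As $\supp\psi\subset\omega$, assumption \eqref{eq:assumption} forces $\int_0^T\varphi\int_\T\psi^2|u_n|^2\to0$, whence $\langle\psi^2\rangle\int_0^T\varphi\int_\T|u_n|^2\to0$; since $\langle\psi^2\rangle>0$ the claim follows. The main obstacle is making this symbol calculus rigorous in Bourgain spaces for the \emph{general} order $2j+1$: the commutator expansion carries $\sim 2j$ lower-order terms, and each pairing must be closed in $X^{s,b}$ duality while accounting for the $2j|b|$ loss from multiplication. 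Verifying that the exponents $-2j+2jb$ and $-1+2jb'$ make every term absorbable for arbitrary $j$ (not only $j=1,2$) is precisely the crux and the novelty of the argument.
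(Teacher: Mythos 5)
Your proposal is correct in substance and runs on the same engine as the paper's proof: a commutator argument with a multiplier of order $-2j$ in Bourgain spaces, the antiderivative supplied by Lemma \ref{lem:weight}, and every error term closed through the $X_T^{s,b}$--$X_T^{-s,-b}$ duality together with Lemma \ref{estimates_a} and \eqref{eq:Xsb stable}, with precisely the norms in \eqref{k14}--\eqref{k15} absorbing each piece. The execution, however, differs from the paper's in two genuine ways, both simplifications. First, the paper does not subtract a constant: it takes $\partial_x \varphi = \chi(x) - \chi(x-x_0)$, uses the commutator identity to transfer smallness from $\omega$ to the translated bump $\chi(\cdot - x_0)$, and then sums over a partition of unity $\sum_m \chi(\cdot - x_0^m) \equiv 1$; your choice $\partial_x \Phi = \psi^2 - \langle \psi^2 \rangle$ produces the quantity $\langle \psi^2 \rangle \int_0^T \varphi \, \|u_n\|_{L^2}^2 \, dt$ in one stroke and dispenses with the covering argument entirely. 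Second, the paper's multiplier $D^{-2j}$ is degenerate at the zero frequency, which forces a separate treatment of the mean: the term $\widehat{u_n}(t,0)$ is handled by boundedness in $H^b(0,T)$ plus the Rellich theorem (the paper's proof of \eqref{eq:step2}); your elliptic multiplier $\langle D \rangle^{-2j}$ swallows the zero mode into the order $-2$ error $m(D)-1$, which \eqref{k15} kills via the nesting \eqref{eq:nesting}, so that whole step disappears. What the paper's write-up has and yours lacks is the justification of the formal energy identity for solutions merely in $X_T^{0,b}$: the paper inserts the regularization $\mathcal A_{\vep} = \mathcal A e^{\vep \partial_x^2}$ (using Lemma \ref{lem:La2}) so that each pairing makes sense separately, proves bounds uniform in $\vep$, and then passes to the limit, whereas you write $\frac{d}{dt}(Bu_n,u_n) = (Bf_n,u_n)+(Bu_n,f_n)+(-1)^{j+1}([\partial_x^{2j+1},B]u_n,u_n)$ and integrate by parts directly. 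You flag this yourself ("making the symbol calculus rigorous"), but do not carry it out; it is a standard repair --- insert the same heat regularization, or approximate $u_n$ in $X_T^{0,b}$ by smooth functions and use that every bilinear form you estimate is continuous in the stated topologies --- so it is a gap of rigor rather than of ideas, and all your exponent calibrations are exactly the ones the paper verifies.
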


\begin{proof}
For any compact interval $I \subset (0,T)$, we choose a cut-off  function $\psi \in C_c^{\infty}((0,T))$ such that $0 \le \psi \le 1$ and $\psi \equiv 1$ in $I$. Then, a simple computation yields
\[\norm{u_n}_{L^2(I;L^2)}^2 \le \int_0^T \psi(t) (u_n, u_n) \; dt.\]
On the other hand, since $\T$ is compact, there exist a finite number of open interval of the length less than the size of $\omega$ centered at $x_0^m$, $m=1,2,\cdots,M$ for some $M$. For an appropriate $\chi \in C_c^{\infty}(\omega)$, we can construct a partition of unity as
\begin{equation}\label{eq:PoU}
0 \le \chi(x-x_0^m) \le 1 \quad \mbox{and} \quad \sum_{m=1}^M \chi(x-x_0^m) \equiv 1,
\end{equation}
for all $x \in \T$ and $m=1, \cdots,M$. Then,
\[\int_0^T \psi(t) (u_n, u_n) \; dt \le \sum_{m=1}^M \int_0^T \psi(t) (\chi(\cdot - x_0^m) u_n, u_n) \; dt,\]
which reduces our problem to proving that for any $\chi \in C_c^{\infty}(\omega)$ and $x_0 \in \T$
\begin{equation}\label{eq:con1}
\left(\psi(t)\chi(\cdot-x_0) u_{n}, u_{n}\right)_{L^2(0,T;L^2)} \rightarrow 0, \quad n \to \infty.
\end{equation}
Once proving that 
\begin{equation}\label{eq:con2}
\lim_{n \to \infty}\left(\psi(t) (\partial_x \varphi) u_{n}, u_{n}\right)_{L^2(0,T;L^2)} =0,
\end{equation}
for some $\varphi \in C^{\infty}(\T)$, we immediately obtain \eqref{eq:con1} by putting
\begin{equation}\label{eq:weight}
\partial_x\varphi = \chi(x) - \chi(x-x_0).
\end{equation}
Indeed, a direct computation gives
\[\left(\psi(t)\chi(x-x_0) u_{n}, u_{n}\right)_{L^2(0,T;L^2)} = \left(\psi(t)\chi(x) u_{n}, u_{n}\right)_{L^2(0,T;L^2)} - \left(\psi(t) (\partial_x \varphi) u_{n}, u_{n}\right)_{L^2(0,T;L^2)},\]
and the right-hand side goes to zero thanks to \eqref{eq:assumption} and \eqref{eq:con2}, which implies \eqref{eq:con1}. Note that Lemma \ref{lem:weight} ensures to find $\varphi \in C^{\infty}(\mathbb{T})$ satisfying \eqref{eq:weight}. Thus, we are now further reduced to proving \eqref{eq:con2}.

On the other hand, one knows from Plancherel's theorem that
\[\begin{aligned}
\int_\T \partial_x \varphi (x) u_{n}(x) u_{n}(x) \; dx 
=&~{} \widehat{u_{n}}(0)\int_\T \partial_x \varphi (x) u_{n} (x) \; dx\\
&+ \int_\T (-1)^j\partial_x^{2j}D^{-2j}u_{n}(x) \partial_x \varphi (x) u_{n} (x) \; dx.
\end{aligned}\]
Therefore, the proof of Proposition \ref{prop_a} is completed from
\begin{equation}\label{eq:step1}
\lim_{n \to \infty}\left|\left(\psi(t) (\partial_x \varphi) \partial_x^{2j}D^{-2j} u_{n}, {u_n}\right)_{L^2(0,T;L^2)}\right| = 0
\end{equation}
and
\begin{equation}\label{eq:step2}
\lim_{n \to \infty}\left|\left(\psi(t) (\partial_x \varphi) \widehat{u_{n}}(t,0), u_{n}\right)_{L^2(0,T;L^2)}\right| = 0.
\end{equation}

\subsubsection*{\textbf{Proof of \eqref{eq:step1}}} Let take real valued $\varphi \in C^{\infty}(\T)$ (satisfying \eqref{eq:weight}) and $\psi \in C_0^{\infty}((0,T))$, and let set
\[\mathcal B:=\varphi (x) D^{-2j} \quad \mbox{and} \quad \mathcal A:= \psi(t) \mathcal B.\]
It is straightforward to know $\mathcal A^*= \psi(t) D^{-2j} \varphi(x)$. We denote by $\mathcal A_{\vep}$ the regularization of $\mathcal A$ by
\[\mathcal A_{\vep} := \mathcal A e^{\vep \partial_x^2} =: \psi(t) \mathcal B_{\vep},\]
and set $\alpha_{n,\vep}= \left([\mathcal A_{\vep}, \mathcal L]u_n, u_n\right)_{L^2(0,T;L^2)}$, where $\mathcal L = \pt + (-1)^{j+1}\px^{2j+1}$. From $\mathcal L u_n = f_n$ and $\mathcal L^* = -\mathcal L$, one has
\[\alpha_{n, \vep} = \left(f_n, \mathcal A_{\vep}^*u_n\right)_{L^2(0,T;L^2)} + \left(\mathcal A_{\vep} u_n, f_n\right)_{L^2(0,T;L^2)}.\]
Using \eqref{eq:Xsb stable} and Lemma \ref{estimates_a}, the Cauchy-Schwarz inequality yields
\[\left|\left(f_n, \mathcal A_{\vep}^*u_n\right)_{L^2(0,T;L^2)}\right| \le \norm{f_n}_{X_T^{-2j+2jb, -b}}\norm{\mathcal A_{\vep}^*u_n}_{X_T^{2j-2jb,b}} \lesssim \norm{f_n}_{X_T^{-2j+2jb, -b}}\norm{u_n}_{X_T^{0,b}}.\]
Similarly, we show
\[\left|\left(\mathcal A_{\vep} u_n, f_n\right)_{L^2(0,T;L^2)}\right| \lesssim \norm{u_n}_{X_T^{0,b}}\norm{f_n}_{X_T^{-2j+2jb, -b}}.\]
Thus, the assumption \eqref{k15} ensures
\[\lim_{n \to \infty} \sup_{0<\vep \le 1} \alpha_{n,\vep} = 0.\]
On the other hand, the fact $\partial_t \mathcal A_{\vep} = \psi'(t)\mathcal B_{\vep} + \mathcal A_{\vep} \partial_t$ enables us to rewrite
\[\alpha_{n,\vep}=\left([\mathcal A_{\vep}, (-1)^{j+1}\partial_x^{2j+1}]u_n, u_n\right)_{L^2(0,T;L^2)} - \left(\psi'(t)\mathcal B_{\vep} u_n, u_n\right)_{L^2(0,T;L^2)}.\]
Then, the analogous argument shows
\[ \left|\left(\psi'(t)\mathcal B_{\vep} u_n, u_n\right)_{L^2(0,T;L^2)}\right| \lesssim \norm{u_n}_{X_T^{-2j+2jb, -b}}\norm{u_n}_{X_T^{0,b}},\]
which implies
\[\lim_{n \to \infty} \sup_{0<\vep \le 1}\left|\left(\psi'(t)\mathcal B_{\vep} u_n, u_n\right)_{L^2(0,T;L^2)}\right| = 0.\]
Hence, we conclude that
\[\lim_{n \to \infty} \sup_{0< \vep \le 1} \left([\mathcal A_{\vep}, (-1)^{j+1}\partial_x^{2j+1}]u_n, u_n\right)_{L^2(0,T;L^2)} = 0.\]
In particular,
\[\lim_{n \to \infty} \left([\mathcal A, (-1)^{j+1}\partial_x^{2j+1}]u_n, u_n\right)_{L^2(0,T;L^2)} = 0.\]
Since $\partial_x$ commutes with $D^{-1}$, a straightforward computation gives
\begin{equation}\label{eq:commutator1}
[\mathcal A, (-1)^{j+1}\partial_x^{2j+1}] = (-1)^j\psi(t)\sum_{\ell = 1}^{2j+1} \binom{2j+1}{\ell}\partial_x^{\ell} \varphi(x) \partial_x^{2j+1-\ell}D^{-2j}.
\end{equation}
Analogously, we show for $\ell =2, \cdots, 2j+1$ that\footnote{It suffices to choose $b' = b$ when $\ell = 2j+1$.}
\[\begin{aligned}
&~{}\left|\left((-1)^j\psi(t) \binom{2j+1}{\ell}\partial_x^{\ell} \varphi(x) \partial_x^{2j+1-\ell}D^{-2j} u_n, u_n\right)_{L^2(0,T;L^2)}\right|\\
\le&~{} \norm{\psi(t) \binom{2j+1}{\ell}\partial_x^{\ell} \varphi(x) \partial_x^{2j+1-\ell}D^{-2j}u_n}_{X_T^{0, -b'}}\norm{u_n}_{X_T^{0,b'}}\\
\lesssim&~{} \norm{u_n}_{X_T^{1-\ell+2jb', -b'}}\norm{u_n}_{X_T^{0,b'}},
\end{aligned}\]
which implies
\begin{equation}\label{eq:commutator2}
\lim_{n \to \infty}\left|\left((-1)^j\psi(t) \binom{2j+1}{\ell}\partial_x^{\ell} \varphi(x) \partial_x^{2j+1-\ell}D^{-2j} u_n, u_n\right)_{L^2(0,T;L^2)}\right| = 0.
\end{equation}
Collecting \eqref{eq:commutator1} and \eqref{eq:commutator2}, we complete the proof of \eqref{eq:step1}.

\subsubsection*{\textbf{Proof of \eqref{eq:step2}.}} A straightforward computation in addition to \eqref{k14} yields
\[\norm{\widehat{u_n}(t,0)}_{L^2((0,T))} \lesssim \norm{u_{n}}_{X_T^{0,b}} \le C.\]
Thus, the sequence $\widehat{u_n}(\cdot,0)$ is bounded in $H^{b}(0,T),$ which is compactly embedded in $L^{2}(0,T),$ by the Rellich Theorem. Therefore, there exists a subsequence that converges strongly in $L^{2}(0,T).$ Next, it can be seen that the only weak limit of a subsequence in $L^{2}(0,T)$ is zero, so that the whole sequence tends strongly to $0$ in $L^{2}(0,T).$ Hence,
$$\widehat{u_{n}}(t,0)\rightarrow 0,\;\text{(strongly) in}\;L^{2}(0,T),\;\text{as}\;n\rightarrow +\infty,$$  
and \eqref{eq:step2} holds. Consequently, Proposition \ref{prop_a} is proved.
\end{proof}

\subsection{Propagation of regularity} We now present the properties of propagation of regularity for the linear differential operator $\mathcal L=\partial_{t} + (-1)^{j+1}\partial_x^{2j+1}$ associated with the higher-order KdV-type equation.

\begin{proposition}\label{prop_b}
Let $T>0$, $r\in\mathbb{R}$, $0\leq b\leq1$ and $f\in X_T^{r,-b}$ be given. Let\ $u\in X_T^{r,b}$ be a solution of%
\[
\partial_{t}u+(-1)^{j+1} \px^{2j+1}u=f\text{.}%
\]
If there exists a nonempty $\omega\subset\mathbb{T}$ such that $u\in L_{loc}^{2}\left(  \left(  0,T\right)  ,H^{r+\rho}\left(  \omega\right)
\right)  $ for some $\rho$ with%
\[
0<\rho\leq\min\left\{  j(1-b),\frac{1}{2}\right\}  \text{,}%
\]
then%
\[
u\in L_{loc}^{2}\left(  \left(  0,T\right)  ,H^{r+\rho}\left(  \mathbb{T}%
\right)  \right)  \text{.}%
\]
\end{proposition}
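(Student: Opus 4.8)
The plan is to follow the commutator-with-regularization scheme of Proposition~\ref{prop_a}, but now testing the equation against a pseudodifferential operator whose commutator with $\mathcal{L}=\partial_t+(-1)^{j+1}\partial_x^{2j+1}$ is \emph{elliptic of order exactly} $2(r+\rho)$ on the region into which we wish to propagate regularity. Concretely, I would work with $B:=\varphi(x)\bra{\partial_x}^{2(r+\rho)-2j}$ and $\mathcal{A}:=\psi(t)B$, where $\bra{\partial_x}^{s}$ denotes the even, elliptic Fourier multiplier of order $s$ (symbol $\bra{\xi}^{s}$), $\psi\in C_c^\infty((0,T))$ with $\psi\ge0$ localizes in time away from the endpoints (so the time integration by parts produces no boundary term), and $\varphi\in C^\infty(\T)$ is a real weight chosen below. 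Using an even symbol, rather than the odd multiplier $D^s$, is essential here: it is what makes the principal part of the commutator a \emph{sign-definite} quadratic form. Since $u\in X_T^{r,b}$ only controls $r$ spatial derivatives, the pairing $(\mathcal{A}u,u)$ is not a priori finite; I would therefore regularize by $\rho_\vep=e^{\vep^2\partial_x^2}$, set $\mathcal{A}_\vep:=\mathcal{A}\rho_\vep$, derive every bound uniformly in $0<\vep\le1$, and pass to the limit by Fatou at the end, absorbing the regularization errors through Lemma~\ref{lem:La2}.

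The core is the two-sided evaluation of $\alpha_\vep:=([\mathcal{A}_\vep,\mathcal{L}]u,u)_{L^2(0,T;L^2)}$. Using $\mathcal{L}u=f$ and $\mathcal{L}^*=-\mathcal{L}$, the first evaluation gives $\alpha_\vep=(\mathcal{A}_\vep f,u)+(\mathcal{A}_\vep u,f)$, which I would estimate by the $X_T^{r,-b}$--$X_T^{-r,b}$ duality together with the multiplication property (Lemma~\ref{estimates_a}); the loss of $2jb$ derivatives incurred there by multiplication by $\varphi$ is exactly absorbed by the order $2(r+\rho)-2j$ of $\bra{\partial_x}^{2(r+\rho)-2j}$ precisely when $\rho\le j(1-b)$, which is the first hypothesis. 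The second evaluation expands $[\mathcal{A}_\vep,\mathcal{L}]=[\mathcal{A}_\vep,\partial_t]+(-1)^{j+1}[\mathcal{A}_\vep,\partial_x^{2j+1}]$, where $[\mathcal{A}_\vep,\partial_t]=-\psi'(t)B\rho_\vep$ and, since $\bra{\partial_x}^{2(r+\rho)-2j}$ commutes with $\partial_x^{2j+1}$, $(-1)^{j+1}[\mathcal{A}_\vep,\partial_x^{2j+1}]=(-1)^{j+1}\psi\,[\varphi,\partial_x^{2j+1}]\bra{\partial_x}^{2(r+\rho)-2j}\rho_\vep$. The leading ($\ell=1$) term carries the real, nonnegative principal symbol $(2j+1)\psi(t)\varphi'(x)\bra{\xi}^{2(r+\rho)}$ whenever $\varphi'\ge0$, while every remaining contribution (the $\ell\ge2$ terms and the $\psi'$ term) has order at most $2(r+\rho)-1$ in $x$ and is therefore controlled by $\|u\|_{L^2_tH^r}^2\lesssim\|u\|_{X_T^{r,b}}^2$ exactly when $\rho\le\tfrac12$, the second hypothesis.

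The local hypothesis enters through the weight: by Lemma~\ref{lem:weight} I would pick $\varphi$ with $\varphi'=\chi_1^2-\chi_0^2$, where $\chi_0\in C_c^\infty(\omega)$ records the region of known regularity and $\chi_1$ is supported on the target interval. A sharp G\r{a}rding-type positivity (the remainder being of order $2(r+\rho)-1$, hence admissible by $\rho\le\tfrac12$) then converts the two evaluations of $\alpha_\vep$ into $\|\chi_1\bra{\partial_x}^{r+\rho}\rho_\vep u\|_{L^2(0,T;L^2)}^2\lesssim\|\chi_0\bra{\partial_x}^{r+\rho}\rho_\vep u\|_{L^2(0,T;L^2)}^2+\|f\|_{X_T^{r,-b}}\|u\|_{X_T^{r,b}}+\|u\|_{X_T^{r,b}}^2$, uniformly in $\vep$. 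The right-hand side is finite: the $\chi_0$-term because $u\in L_{loc}^2((0,T),H^{r+\rho}(\omega))$ and $\mathrm{supp}\,\chi_0\subset\omega$ (using a Lemma~\ref{lem:La1}-type commutator to move $\bra{\partial_x}^{r+\rho}$ past $\chi_0$, valid since $\rho\le1$), and the remaining terms by the preceding estimates. Letting $\vep\to0$ yields $u\in L_{loc}^2((0,T),H^{r+\rho})$ on $\mathrm{supp}\,\chi_1$. Since the coefficient $2j+1$ is positive, the sign of $\varphi'$ fixes one direction of propagation; iterating along a finite chain of such weights that marches $\chi_1$ around the compact circle $\T$ (each step placing $\chi_0^2$ in an already-controlled region) upgrades the conclusion to all of $\T$.

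I expect the main obstacle to be the uniform-in-$\vep$ bookkeeping of the nonlocal, non-commuting operators involved: $\bra{\partial_x}^{2(r+\rho)-2j}$, the smoothing $\rho_\vep$, and the cutoffs do not commute with multiplication by $\varphi$, so every commutator must be absorbed with a genuine derivative gain, relying on Lemmas~\ref{lem:La1} and~\ref{lem:La2} to keep all remainders strictly below order $2r$. The delicate feature is that the two constraints $\rho\le j(1-b)$ and $\rho\le\tfrac12$ are saturated by two genuinely different mechanisms --- the $X^{s,b}$-multiplication loss in the $f$-pairing and the first subprincipal term of the spatial commutator --- so there is no slack; establishing the positive lower bound for the principal part while fitting all remainders into this budget, uniformly in the regularization, is the crux.
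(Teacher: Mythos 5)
Your proposal is correct and follows essentially the same route as the paper's own proof: the identical commutator scheme (evaluating $([\mathcal{A}_\vep,\mathcal{L}]u,u)_{L^2(0,T;L^2)}$ two ways), the same regularization and the same Lemmas \ref{lem:weight}, \ref{lem:La1}, \ref{lem:La2} and \ref{estimates_a}, and exactly the same thresholds --- $\rho\le j(1-b)$ from the $X_T^{r,-b}$--$X_T^{-r,b}$ pairing with $f$ via the $2jb$-derivative multiplication loss, and $\rho\le\frac12$ from the subprincipal ($\ell\ge2$) commutator terms --- the only cosmetic difference being that the paper, instead of a G\r{a}rding-type inequality, takes $\partial_x\varphi=\chi^2(x)-\chi^2(x-x_0)$ and bounds the two pairings \eqref{eq:proof3} and \eqref{eq:proof4} separately before summing over a partition of unity by translates. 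Two minor points: your weight requires the normalization $\int_\T\chi_1^2=\int_\T\chi_0^2$ for Lemma \ref{lem:weight} to apply (trivially arranged by rescaling $\chi_1$), and the ``direction of propagation/marching around the circle'' step is superfluous, since the estimate is direction-independent and a single covering of $\T$ by admissible targets concludes in one step, exactly as in the paper.
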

\begin{proof}
The strategy of the proof is analogous to the proof of Proposition \ref{prop_a}. Let $s:= r + \rho$. For any compact interval $I \subset (0,T)$ and $\psi \in C_c^{\infty}((0,T))$ as in the proof of Proposition \ref{prop_a},  we have
\[\begin{aligned}
\norm{u}_{L^2(I;H^s)}^2 \le&~{} \int_0^T \psi(t) (u,u)_{H^s} \; dt\\
\lesssim&~{} \norm{u}_{L^2(0,T;L^2)}^2 + \left(\psi(t)D^{2s-2j}(-\partial_x^2)^{j}u,u\right)_{L^2(0,T;L^2)},
\end{aligned}\]
Hence, we are reduced to proving
\[\left|\left(\psi(t)D^{2s-2j}\partial_x^{2j}u,u\right)_{L^2(0,T;L^2)} \right| \lesssim 1.\]
On the other hand, using a partition of unity as in \eqref{eq:PoU} (but $\chi^2$ instead of $\chi$\footnote{It is possible if we simply take $\bar{\chi} = \sqrt{\chi}$.}), it is enough to show that for any $\chi \in C_c^{\infty}(\omega)$ and $x_0 \in \T$, we have
\[\left|\left(\psi(t)D^{2s-2j}\chi^2(x-x_0)\partial_x^{2j}u,u\right)_{L^2(0,T;L^2)} \right| \lesssim 1.\]
Moreover, by taking  $\partial_x\varphi = \chi^2(x) - \chi^2(x-x_0)$, we are finally reduced to proving
\begin{equation}\label{eq:proof3}
\left|\left(\psi(t)D^{2s-2j}(\partial_x \varphi)\partial_x^{2j}u,u\right)_{L^2(0,T;L^2)} \right| \lesssim 1,
\end{equation}
for some $\varphi \in C^{\infty}(\T)$, and
\begin{equation}\label{eq:proof4}
\left|\left(\psi(t)D^{2s-2j}\chi^2(x)\partial_x^{2j}u,u\right)_{L^2(0,T;L^2)} \right| \lesssim 1.
\end{equation}

\subsubsection*{\textbf{Proof of \eqref{eq:proof3}}}  For $n \in \N$, set
\[u_n:= e^{\frac1n \partial_x^2} u \quad \mbox{and} \quad f_n := e^{\frac1n \partial_x^2}f.\]
Note that $\mathcal L u_n = f_n$. Then, there exists $C>0$ such that
\begin{equation*}
\norm{u_n}_{X_T^{r,b}}, ~\norm{f_n}_{X_T^{r,-b}} \le C
\end{equation*}
for all $n \in\ N$. Define operators $\mathcal A$ and $\mathcal B$ by
\[\mathcal B:=D^{2s-2j}\varphi (x)  \quad \mbox{and} \quad \mathcal A:= \psi(t) \mathcal B,\]
then we know similarly as in the proof of Proposition \ref{prop_a} that
\[\begin{aligned}
\left([\mathcal A, (-1)^{j+1}\partial_x^{2j+1}]u_n, u_n\right)_{L^2(0,T;L^2)} - \left(\psi'(t)\mathcal B u_n, u_n\right)_{L^2(0,T;L^2)} =&~{}  \left(f_n, \mathcal A^*u_n\right)_{L^2(0,T;L^2)} \\&+ \left(\mathcal A u_n, f_n\right)_{L^2(0,T;L^2)}.
\end{aligned}\]
Using \eqref{eq:Xsb stable} and Lemma \ref{estimates_a}, one shows
\[\begin{aligned}
\left|\left(\mathcal A u_n, f_n\right)_{L^2(0,T;L^2)}\right| \le&~{}\norm{\mathcal A u_n}_{X_T^{-r,b}}\norm{f_n}_{X_T^{r,-b}}\\
 \lesssim&~{}\norm{u_n}_{X_T^{-r+2jb +2s -2j,b}}\norm{f_n}_{X_T^{r,-b}}\\
 \lesssim&~{}\norm{u_n}_{X_T^{r,b}}\norm{f_n}_{X_T^{r,-b}} \\
 \lesssim&~{} 1,
\end{aligned}\]
since $-r+2jb +2s -2j = r + 2\rho -2j(1-b) \le r$. Analogously, we show
\[\left|\left(\psi'(t)\mathcal B u_n, u_n\right)_{L^2(0,T;L^2)}\right|, ~\left| \left(f_n, \mathcal A^*u_n\right)_{L^2(0,T;L^2)}\right| \lesssim 1,\]
which says
\[\left|\left([\mathcal A, (-1)^{j+1}\partial_x^{2j+1}]u_n, u_n\right)_{L^2(0,T;L^2)}\right|\ \lesssim 1.\]
Note that the implicit constant, here, does not depend on $n \in \N$. Similarly as in \eqref{eq:commutator1}, we know
\[[\mathcal A, (-1)^{j+1}\partial_x^{2j+1}] = (-1)^j\psi(t)D^{2s-2j}\sum_{\ell = 1}^{2j+1} \binom{2j+1}{\ell}  \partial_x^{\ell}\varphi(x) \partial_x^{2j+1-\ell}.\]
For $\ell = 2, \cdots, 2j+1$, a direct computation gives
\[\begin{aligned}
&~{}\left|\left((-1)^j\psi(t)D^{2s-2j}\binom{2j+1}{\ell}  \partial_x^{\ell}\varphi(x) \partial_x^{2j+1-\ell} u_n, u_n \right)_{L^2(0,T;L^2)}\right|\\
 \lesssim&~{}\norm{\psi(t)D^{2s-2j}\partial_x^{\ell}\varphi(x) \partial_x^{2j+1-\ell} u_n}_{L^2(0,T;H^{-r})}\norm{u_n}_{L^2(0,T;H^r)}\\
\lesssim&~\norm{u_n}_{L^2(0,T;H^{-r+2s+1-\ell})}\norm{u_n}_{L^2(0,T;H^r)}.
\end{aligned}\]
Since $-r+2s+1-\ell = r +2\rho - 1 + (2 - \ell) \le r$, for all $\ell =2, \cdots 2j+1$, whenever $\rho \le \frac12$, we conclude that
\[
\left|\left((-1)^j\psi(t)D^{2s-2j}\binom{2j+1}{\ell}  \partial_x^{\ell}\varphi(x) \partial_x^{2j+1-\ell} u_n, u_n \right)_{L^2(0,T;L^2)}\right| \lesssim 1,\]
for $\ell = 2, \cdots, 2j+1$ and $n \in \N$. Consequently, we obtain
\[\left|\left((-1)^j\psi(t)D^{2s-2j} (\partial_x\varphi) \partial_x^{2j} u_n, u_n \right)_{L^2(0,T;L^2)}\right| \lesssim 1.\]
Taking the limit on $n$, we conclude \eqref{eq:proof3}.

\subsubsection*{\textbf{Proof of \eqref{eq:proof4}}} A straightforward computation gives
\[\begin{aligned}
\left(\psi(t)D^{2s-2j}\chi^2 \partial_x^{2j} u_n, u_n\right)_{L^2(0,T;L^2)} =&~{}\left(\psi(t)[D^{s-2j},\chi]\chi \partial_x^{2j} u_n, D^s u_n\right)_{L^2(0,T;L^2)}\\
&+\left(\psi(t)D^{s-2j}\chi \partial_x^{2j} u_n, [D^s, \chi] u_n\right)_{L^2(0,T;L^2)}\\
&+\left(\psi(t)D^{s-2j}\chi \partial_x^{2j} u_n, D^s\chi u_n\right)_{L^2(0,T;L^2)}\\
=:&~{}I+II+III.
\end{aligned}\]
Lemmas  \ref{lem:La1} and \ref{lem:La2} ensure for $u \in X_T^{r,b} \cap L_{loc}^2(0,T;H^s(\omega))$ that
\[\norm{\chi u_n}_{H^s} \le \norm{e^{\frac1n \partial_x^2}\chi u}_{H^s} + \norm{[\chi, e^{\frac1n \partial_x^2}]u}_{H^s} \lesssim \norm{\chi u}_{H^s} + \norm{u}_{H^{s-1}}\]
and
\[\norm{\chi \partial_x^{2j} u_n}_{H^{\sigma - 2j}} \le \norm{\chi u_n}_{H^{\sigma}} + \norm{[\chi, \partial_x^{2j}]u_n}_{H^{\sigma -2j}} \lesssim \norm{\chi u_n}_{H^{\sigma}} + \norm{u_n}_{H^{\sigma-1}},\]
for $\sigma \in \R$. The Cauchy-Schwarz inequality and Lemmas \ref{lem:La1} and \ref{lem:La2}, in addition to above estimates, yield
\[\begin{aligned}
|I| \le&~{} \norm{u_n}_{L^2(0,T;H^r)}\norm{\psi(t)D^{\rho}[D^{s-2j},\chi]\chi \partial_x^{2j} u_n}_{L^2(0,T;L^2)}\\
\le&~{} \norm{u_n}_{L^2(0,T;H^r)}\norm{\psi(t)\chi \partial_x^{2j} u_n}_{L^2(0,T;H^{s+\rho - 1- 2j})}\\
\lesssim&~{} \norm{u_n}_{X_T^{r,b}}\left(\norm{\psi(t) \chi u}_{L^2(0,T;H^s)} + \norm{u}_{X_T^{r,b}} + \norm{u_n}_{X_T^{r,b}}\right)\\
\lesssim&~{} 1,
\end{aligned}\]
\[\begin{aligned}
|II| \le&~{} \norm{\psi(t)\chi \partial_x^{2j} u_n}_{L^2(0,T;H^{s-2j-\rho})}\norm{D^{\rho}[D^s, \chi] u_n}_{L^2(0,T;L^2)} \\
\le&~{} \norm{\psi(t)\chi \partial_x^{2j} u_n}_{L^2(0,T;H^{s-2j-\rho})}\norm{u_n}_{L^2(0,T;H^{\rho +s -1})}\\
\lesssim&~{} \norm{u_n}_{X_T^{r,b}}\left(\norm{\psi(t) \chi u}_{L^2(0,T;H^s)} + \norm{u}_{X_T^{r,b}} + \norm{u_n}_{X_T^{r,b}}\right)\\
\lesssim&~{} 1
\end{aligned}\]
and
\[\begin{aligned}
|III| \le&~{} \int_0^T \psi(t) \norm{\chi \partial_x^{2j} u_n}_{H^{s-2j}}\norm{\chi u_n}_{H^s}\; dt\\
\lesssim&~{}\int_0^T \psi(t) \norm{\chi u_n}_{H^s}\left(\norm{\chi u_n}_{H^s} + \norm{u_n}_{H^{s-1}}\right)\; dt\\
\lesssim&~{} \norm{u}_{L^2(I;H^s(\omega))}^2 + \norm{u}_{X_T^{r,b}}^2 + \left(\norm{\psi(t)\chi u}_{L^2(0,T;H^s)} + \norm{u}_{X_T^{r,b}}\right)\norm{u_n}_{X_T^{r,b}}\\
\lesssim&~{}1.
\end{aligned}\]
Thus, we obtain
\[\left| \left(\psi(t)D^{2s-2j}\chi^2 \partial_x^{2j} u_n, u_n\right)_{L^2(0,T;L^2)} \right| \lesssim 1,\]
which implies \eqref{eq:proof4} by taking limit on $n$, and the proof of Proposition \ref{prop_b} is achieved.
\end{proof}

\subsection{Unique continuation property} As a consequence of the propagation of regularity, we prove the following unique continuation property for the higher-order KdV type. First, let us prove the auxiliary lemma.

\begin{lemma}
\label{unique_a}Let $u\in X^{0,\frac{1}{2}}_{T}$ be a solution of%
\begin{equation}
\partial_{t}u+(-1)^{j+1} \px^{2j+1}u + u\px u=0 \;\; \mbox{on} \;\; (0,T) \times \T
\text{.} \label{k34}%
\end{equation}
Assume that $u\in C^{\infty}\left(\left(  0,T\right) \times \omega  \right)
$, where $\omega\subset\mathbb{T}$ nonempty set. Then, $u\in C^{\infty}\left(\left(  0,T\right) \times \T  \right)$.
\end{lemma}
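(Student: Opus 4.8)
The plan is to treat \eqref{k34} as the linear equation $\pt u + (-1)^{j+1}\px^{2j+1}u = f$ with forcing $f := -u\px u = -\tfrac12\px(u^2)$, and to bootstrap the local smoothness on $\omega$ up to global regularity by repeatedly invoking the propagation of regularity (Proposition \ref{prop_b}). The basic gain comes from choosing $b = \tfrac12$, for which $\min\{j(1-b),\tfrac12\} = \tfrac12$ for every $j \ge 1$, so that each application of the proposition upgrades the spatial regularity by $\rho = \tfrac12$ derivative. The hypothesis $u \in C^{\infty}((0,T)\times\omega)$ guarantees $u \in L^2_{loc}((0,T);H^{r+1/2}(\omega))$ for every $r$, which is exactly the localized regularity assumption required to run Proposition \ref{prop_b}.

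First I would set up an induction on $s_k = k/2$, $k \in \N$, with inductive hypothesis that $u \in X^{s_k,1/2}_{I}$ for every compact $I \subset (0,T)$; the base case $s_0 = 0$ is the standing assumption $u \in X^{0,1/2}_T$. Assuming $u \in X^{s_k,1/2}_I$, since $s_k \ge 0 \ge -\tfrac{j}{2}$, the bilinear estimate (Lemma \ref{lem:Xsb}(4)) gives $f = -\tfrac12\px(u^2) \in X^{s_k,-1/2}_I$, and Proposition \ref{prop_b} with $r = s_k$, $b = \tfrac12$, $\rho = \tfrac12$ then yields $u \in L^2_{loc}((0,T);H^{s_k+1/2}(\T))$. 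The remaining and most delicate point is to upgrade this $L^2_t H^{s_k+1/2}$ information back into the Bourgain framework $X^{s_k+1/2,1/2}$, so that the nonlinear estimate can be reapplied at the next level and the induction can close.

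For this upgrade step I would choose a time $t_0$ in the interval with $u(t_0) \in H^{s_k+1/2}(\T)$, solve \eqref{k34} locally in $H^{s_k+1/2}$ by the local well-posedness theory (a contraction in $Y^{s_k+1/2}$ built from Lemma \ref{lem:Xsb}(1)--(4)), which produces a solution in $X^{s_k+1/2,1/2}$ on a short time interval, and identify it with $u$ by uniqueness of solutions in $X^{0,1/2}$. Covering a compact subinterval of $(0,T)$ by finitely many such local solutions gives $u \in X^{s_k+1/2,1/2}_{I'}$ for every $I' \subset\subset (0,T)$, closing the induction with $s_{k+1} = s_k + \tfrac12$. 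Iterating over $k$ shows $u \in X^{s,1/2}_{loc}$, hence $u \in C((0,T);H^s(\T))$ by Lemma \ref{lem:Xsb}(1), for every $s \ge 0$; Sobolev embedding then yields $u(t,\cdot) \in C^{\infty}(\T)$ and spatial smoothness to all orders.

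Finally, to promote spatial smoothness to full space–time smoothness, I would read off temporal regularity directly from the equation: $\pt u = -(-1)^{j+1}\px^{2j+1}u - u\px u$ expresses each time derivative through spatial derivatives, which are now controlled in every $H^s$, so differentiating the equation repeatedly in $t$ and $x$ bootstraps $\pt^m u \in C((0,T);H^s(\T))$ for all $m,s$, giving $u \in C^{\infty}((0,T)\times\T)$. I expect the upgrade step---converting the $L^2_{loc}H^{s+1/2}$ output of Proposition \ref{prop_b} back into a uniform $X^{s+1/2,1/2}$ bound on compact time intervals---to be the main obstacle, since it forces one to invoke the well-posedness and uniqueness theory at each regularity level and to manage the shrinking time intervals carefully; everything else is a mechanical iteration of the propagation and bilinear estimates.
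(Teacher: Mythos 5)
Your proposal is correct and follows essentially the same route as the paper's proof: the paper likewise places $-u\px u$ in $X^{0,-\frac12}_T$ (via Lemma \ref{strichartz_estimates} or Lemma \ref{lem:Xsb}), applies Proposition \ref{prop_b} with $\rho=\tfrac12$, upgrades the resulting $L^{2}_{loc}\bigl(0,T;H^{1/2}(\T)\bigr)$ regularity to $X^{\frac12,\frac12}_T$ by re-solving from a time slice $t_0$ with $u(t_0)\in H^{1/2}(\T)$ and invoking uniqueness in $X^{0,\frac12}_T$, and then iterates exactly as in your induction. The only step you omit is the paper's preliminary normalization to mean-zero data (permissible since $[u]$ is conserved), which is what keeps the periodic bilinear estimate safely applicable at the resonant zero mode; apart from that, your argument, including the closing bootstrap of time regularity from the equation, matches the paper's.
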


\begin{proof}
Recall that the mean value $\left[  u\right]  $ is conserved. Changing $a$
into $a+\left[  u\right]  $ if needed, we may assume that $\left[  u\right]
=0$. Using Lemma \ref{strichartz_estimates} (or from Lemma \ref{lem:Xsb}), we have that $u\partial_x u\in X^{0,-\frac{1}{2}}_{T}$. It follows from Proposition \ref{prop_b} with $f = - u \px u$ that%
\[
u\in L_{loc}^{2}( 0,T;H^{\frac{1}{2}}(\T))  \text{.}%
\]
Choose $t_{0}$ such that $u\left(  t_{0}\right)  \in H^{\frac{1}{2}}\left(
\mathbb{T}\right)  $. We can then solve (\ref{k34}) in $X^{\frac{1}{2}%
,\frac{1}{2}}_{T}$ with the initial data $u\left(  t_{0}\right)  $. By
uniqueness of solution in $X^{0,\frac{1}{2}}_{T}$, we conclude that $u\in
X^{\frac{1}{2},\frac{1}{2}}_{T}$. Applying Proposition
\ref{prop_b} iteratively,  we obtain%
\[
u\in L^{2}\left(  0,T;H^{r}\left(  \mathbb{T}\right)  \right)  \text{,
}\forall r \ge 0\text{,}%
\]
and, hence $u\in C^{\infty}\left(\left(  0,T\right) \times \T  \right)$.
\end{proof}

As a consequence of the previous result, we have the following unique continuation property.

\begin{corollary} \label{UCP}Let $\omega$ be a nonempty open set
in $\mathbb{T}$ and let $u\in X^{0,\frac{1}{2}}_{T}$ be a solution of%
\[
\left\{
\begin{array}
[c]{lll}%
\partial_{t}u+(-1)^{j+1} \px^{2j+1}u + u \px u=0 &  & \text{on } \;\; (0,T) \times \T,\\
u=c &  & \text{on } \;\; (0,T) \times \omega,%
\end{array}
\right.
\]
where $c\in\mathbb{R}$ denotes some constant. Then, $u(t,x)  =c$ on $(0,T) \times \T$. Furthermore, if the mean $[u]=0,$ then $
u(t,x)  =0$ on $(0,T) \times \T$.
\end{corollary}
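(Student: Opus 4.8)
The plan is to first upgrade regularity and then to run a unique continuation argument for a reduced, constant-coefficient problem. Since $u=c$ is trivially smooth on $(0,T)\times\omega$ and $u\in X^{0,\frac12}_T$ solves \eqref{k34}, Lemma \ref{unique_a} applies at once and yields $u\in C^{\infty}((0,T)\times\mathbb{T})$. Writing $n:=u-c$, the function $n$ is smooth, vanishes identically on $(0,T)\times\omega$, and solves
\[\pt n+(-1)^{j+1}\px^{2j+1}n+c\,\px n+n\,\px n=0.\]
The very last assertion of the corollary will then be immediate: once $u\equiv c$ is shown, averaging gives $[u]=c$, so $[u]=0$ forces $c=0$ and hence $u\equiv 0$. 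Thus the whole task reduces to proving $u\equiv c$, i.e. $n\equiv 0$.

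Next I would remove the linear transport term $c\,\px n$ by a Galilean-type change of variables on the circle. Setting $v(t,x):=n(t,x+ct)$ --- a well-defined smooth function on $(0,T)\times\mathbb{T}$ by translation invariance of the equation --- a direct computation shows that $v$ solves the \emph{homogeneous} equation
\[\pt v+(-1)^{j+1}\px^{2j+1}v+v\,\px v=0,\]
and that $v$ vanishes on the sheared set $\{(t,x):t\in(0,T),\ x+ct\in\omega\}$, which is again a nonempty open subset of $(0,T)\times\mathbb{T}$. On this open set all derivatives of $v$ vanish, so the quadratic term $v\,\px v$ vanishes there to infinite order as well. It therefore suffices to prove that a smooth solution of this homogeneous equation which vanishes on a space-time open set must vanish identically.

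The heart of the matter --- and the step I expect to be the main obstacle --- is precisely this unique continuation statement for the operator $L=\pt+(-1)^{j+1}\px^{2j+1}$. The natural strategy is to read the equation as the linear problem $Lv=-v\,\px v$, whose right-hand side is a first-order term with smooth (and, where $v$ vanishes, vanishing) coefficient $v$. The surfaces $\{x=\mathrm{const}\}$ are non-characteristic for $L$, since its principal symbol $(-1)^{j+1}(i\xi)^{2j+1}$ does not vanish on the conormal $(\tau,\xi)=(0,1)$, whereas the time slices $\{t=\mathrm{const}\}$ are characteristic. One can therefore hope to propagate the vanishing of $v$ across each surface $\{x=x_0\}$ by a Carleman estimate for $L$ attached to a weight that is pseudoconvex with respect to $\{x=x_0\}$, absorbing the perturbation $v\,\px v$ because its coefficient is bounded; iterating this local step around the compact circle $\mathbb{T}$ then forces $v\equiv 0$ on $(0,T)\times\mathbb{T}$. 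I expect the delicate point to be the construction and verification of such a Carleman weight for the high-order constant-coefficient operator $L$, since Holmgren's theorem is unavailable here (the coefficient $v$ is merely smooth, not analytic).

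As a guiding sanity check for the underlying linear skeleton, the purely linear case $Lv=0$ can be settled transparently through the spectral picture: expanding $v(t,x)=\sum_{k}c_k e^{ikx}e^{itk^{2j+1}}$ and freezing $x$, the trace $t\mapsto v(t,x)$ is a gap series whose frequencies $k^{2j+1}$ obey the gap condition of Lemma \ref{lem:gap}, so its vanishing on the interval $(0,T)$ forces every $c_k e^{ikx}$, hence every $c_k$, to vanish by an Ingham--Beurling argument. This is exactly the unique continuation property for $A$ listed among the key ingredients, and it is the linear mechanism that the Carleman estimate must reproduce in the presence of the nonlinearity. Undoing the change of variables finally returns $n\equiv 0$, that is $u\equiv c$, and, together with the averaging remark above, completes the proof.
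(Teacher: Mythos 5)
Your first step coincides with the paper's: apply Lemma \ref{unique_a} to upgrade $u$ to $C^{\infty}((0,T)\times\mathbb{T})$, and reduce everything to showing $u\equiv c$ (the final assertion about $[u]=0$ then follows by averaging, as you say). The reduction $n=u-c$ and the Galilean shift $v(t,x)=n(t,x+ct)$ are also correct computations. But from that point on there is a genuine gap: the unique continuation statement itself --- that a smooth solution of $\partial_t v+(-1)^{j+1}\partial_x^{2j+1}v+v\,\partial_x v=0$ vanishing on a nonempty open subset of $(0,T)\times\mathbb{T}$ must vanish identically --- is never proved. You correctly identify it as ``the heart of the matter,'' but what you offer is only a programme: read the equation as $Lv=-v\,\partial_x v$, construct a Carleman weight pseudoconvex with respect to the surfaces $\{x=x_0\}$, absorb the first-order perturbation, and iterate around the circle. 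The construction and verification of such a weight for the $(2j+1)$-st order operator is precisely the entire technical content of this step, and you explicitly leave it open. The Ingham--Beurling ``sanity check'' does not fill the hole: it treats only the free equation $Lv=0$, and there is no way to discard the term $v\,\partial_x v$, since outside the set where $v$ is known to vanish its coefficient is an arbitrary smooth, non-analytic function --- which is also why Holmgren is unavailable, as you yourself note.

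The paper closes exactly this gap by quotation rather than construction: after the smoothness upgrade it invokes the unique continuation theorem of Saut and Scheurer \cite{Saut}, which applies to operators of the form $\partial_t+(-1)^{j+1}\partial_x^{2j+1}+\sum_{\ell\le 2j}a_\ell(t,x)\partial_x^{\ell}$ with bounded coefficients (here simply $a_1=u$, so even your Galilean reduction is unnecessary), and whose proof is a Carleman estimate of the very type you envision. So your outline points in the right direction --- it is essentially a plan to reprove Saut--Scheurer --- but as written the proposal is incomplete: to finish, you must either carry out the Carleman estimate in full or cite \cite{Saut} at the decisive step.
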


\begin{proof}
Using Lemma \ref{unique_a}, we infer that $u\in C^{\infty}((0,T) \times \T)$. It follows that $u\equiv c$ on
$(0,T) \times \T$ by the unique property proved by Saut and Scheurer in \cite{Saut}.
\end{proof}

\section{Global stability: Proof of Theorem \ref{exponential_global}}\label{sec6}
In this section, we can establish the global results for the higher-order nonlinear dispersive equation (while Theorem \ref{stability_zhang} has a local aspect). The main ingredients are the propagation of singularities and the unique continuation property shown in the previous section.  Accurately, we are concerned with the stability properties of the closed loop system
\begin{equation}
\left\{
\begin{array}
[c]{lll}%
\partial_{t}u+(-1)^{j+1} \px^{2j+1}u+u\partial_xu=-K_{\lambda}u &  & \text{in } \{t>0\} \times \T \text{,}\\
u\left(  0,x\right)  =u_{0}\left(  x\right)  &  & \text{on }\mathbb{T}\text{,}%
\end{array}
\right.  \label{k35}%
\end{equation}
where $\lambda\geq0$ is a given number, $u_{0}\in H_{0}^{s}\left(  \mathbb{T}\right)$, for any $s\geq0$ and $K_{\lambda}$ is defined by  $K_{\lambda}u (t,x)  \equiv GG^{\ast}L_{\lambda}^{-1}u(t,x)$ for the operator $G$ defined as in \eqref{k5a}. It is known (see, for e.g. \cite{Laurent}) that $G$ is a linear bounded operator from $L^2(0,T;H^s_0(\mathbb{T}))$ into itself. Moreover, G is a self-adjoint positive operator on $L^2_0(\mathbb{T})$.

\subsection{Proof of Theorem \ref{exponential_global} for $s=0$}\label{sec:5.1} Consider $\lambda=0$ in $K_{\lambda}$ and remember that $K_0=GG^*$, so Theorem \ref{exponential_global} in $L^2$-level is a direct consequence of the following \textit{observability inequality}:

\vspace{0.2cm}

\noindent \textit{Let $T>0$ and $R_{0}>0$ be given. There exists a constant
$\mu>1$ such that for any $u_{0}\in L_{0}^{2}\left(  \mathbb{T}\right)  $
satisfying%
\[
\left\Vert u_{0}\right\Vert _{L^2}\leq R_{0}\text{,}%
\]
the corresponding solution $u$ of (\ref{k35}), with $\lambda=0$, satisfies%
\begin{equation}
\left\Vert u_{0}\right\Vert _{L^2}^{2}\leq\mu\int_{0}^{T}\left\Vert
Gu (t)\right\Vert _{L^2}^{2} \; dt\text{.} \label{k46}%
\end{equation}}

Indeed, suppose that \eqref{k46} holds. Assuming that $\lambda=0$, the energy estimate give us
\begin{equation}\label{eq:energy00}
\left\Vert u\left( T, \cdot\right)  \right\Vert _{L^2}^{2}=\left\Vert
u_{0}\right\Vert _{L^2}^{2}-\int_{0}^{T}\left\Vert Gu(t)\right\Vert _{L^2}^{2} \;  dt\text{,}
\end{equation}
which jointly with \eqref{k46} insures,
\[
\left\Vert u\left(  T, \cdot\right)  \right\Vert _{L^2}^{2}\leq\left(
\mu-1\right) \int_{0}^{T}\left\Vert Gu(t)\right\Vert _{L^2}^{2} \;  dt
\]
or equivalently, 
\[
\left\Vert u\left(  T, \cdot\right)  \right\Vert _{L^2}^{2}\leq\left(
\mu-1\right) \left(\left\Vert
u_{0}\right\Vert _{L^2}^{2}-\left\Vert u\left(  T, \cdot\right)  \right\Vert _{L^2}^{2}\right).
\]
Thus, 
\[
\left\Vert u\left(  T, \cdot\right)  \right\Vert _{L^2}^{2}\leq\frac{\mu-1}{\mu} \left\Vert
u_{0}\right\Vert _{L^2}^{2}.
\]
In this way, we inductively obtain that $$\|u( k T,\cdot)\|_{L^{2}}^{2} \leqslant \left(\frac{\mu-1}{\mu}\right)^{k}\left\|u_{0}\right\|_{L^{2}}^{2},$$ for all $k \geqslant 0 .$ Finally, analogously to \eqref{eq:energy00}, we know $$\|u( t, \cdot)\|_{L^{2}} \leqslant \|u( k T,\cdot)\|_{L^{2}},$$ for $k T \leqslant t \leqslant(k+1) T$, thus
\begin{equation}\label{exp_r}
\left\Vert u\left(  t, \cdot\right)  \right\Vert _{L^2(\mathbb{T})} \leqslant c e^{-\gamma t}\left\|u_{0}\right\|_{L^2(\mathbb{T})}, \quad \forall t \geqslant 0,
\end{equation}
where $c=\frac{\mu}{\mu-1}$ and $\gamma=\frac{\log \left(\frac{
\mu}{\mu-1}\right)}{T}$, and Theorem \ref{exponential_global}  holds true for $s=0$. \qed

\medskip

Let us now turn to prove inequality \eqref{k46}. To do that we
argue by contradiction. Suppose not, there exist a sequence
$\left\{  u_{n}\right\}  _{n\in\mathbb{N}}=u_{n}$, such that $u_{n}%
\in Y^{0}_{T}$ is solution of (\ref{k35}) satisfying%
\[
\left\Vert u_{0, n}  \right\Vert _{L^{2}}\leq R_{0}%
\]
but%
\begin{equation}
\int_{0}^{T}\left\Vert Gu_{n}(t)\right\Vert _{L^{2}}^{2} \; dt<\frac{1}{n}\left\Vert
u_{0,n}\right\Vert _{L^{2}}^{2}\text{,} \label{k47}%
\end{equation}
where $u_{0,n}=u_{n}\left(  0\right)  $. Let $\xi_{n}:=\left\Vert u_{0,n}\right\Vert
_{L^{2}}\leq R_{0}$. Then, one can choose a subsequence of $\xi_{n}=\left\{  \xi
_{n}\right\}  _{n\in\mathbb{N}}$, still denote by $\xi_{n}$, such that,
\[
\lim_{n\rightarrow\infty}\xi_{n}=\xi\text{.}%
\]
There are two possible cases: (a) $\xi>0$ and (b) $\xi=0$.

\medskip

\noindent \textbf{\textit{Case (a): $\xi>0$}}. Since the sequence $u_{n}$ is bounded in $Y^0_T$, by Lemma
\ref{lem:Xsb} (see particularly \cite[Lemma 2.4]{HongKwak}), the sequence $\left\{  \partial_{x}\left(  u_{n}%
^{2}\right)  \right\}  _{n\in\mathbb{N}}$ is bounded in $X^{0,-\frac{1}{2}
}_{T}$. 
By the compactness of embedding (taking subsequences if needed, but still denote by $u_{n}$), we know
\[u_{n}\rightarrow u  \quad \text{in }X^{-1,0}_{T} \quad \mbox{and} \quad -\frac{1}{2}\partial_{x}\left(  u_{n}^{2}\right)  \rightharpoonup f \quad
\text{ in }X^{0,-\frac{1}{2}}_{T},\]
where $u\in X^{0,\frac{1}{2}}_{T}$ and $f\in X^{0,-\frac{1}{2}}_{T}$.
Moreover, from \eqref{k11}, we obtain
\[
X^{0,\frac{1}{2}}_{T} \hookrightarrow L^{4} ((0,T) \times \T)  \text{,}%
\]
which ensures that $u_{n}^{2}$ is
bounded in $L^{2}\left((0,T) \times \T \right)  $.
Therefore, its follows that $\partial_{x}\left(  u_{n}^{2}\right)  $ is
bounded in
\[
X^{-1,0}_{T}=L^{2}\left(  0,T;H^{-1}\left(  0,L\right)  \right)  \text{.}%
\]
From interpolation of the space $X^{0,-\frac{1}{2}}_{T}$ and $X^{-1,0}_{T}$,
we obtain that $\partial_{x}\left(  u_{n}^{2}\right)  $ is bounded in
$X^{-\theta,-\frac{1}{2}+\frac{\theta}{2}}_{T}$, for $\theta\in\left[
0,1\right]  $. Again using the compactness of embedding
we conclude that
\[
-\frac{1}{2}\partial_{x}\left(  u_{n}^{2}\right)  \rightarrow f\text{ in
}X^{-1,-\frac{1}{2}}_{T}\text{.}
\]
On the other hand, it follows from (\ref{k47}) that%
\begin{equation}\label{k48a}
\int_{0}^{T}\left\Vert Gu_{n}\right\Vert _{L^2}^{2}dt\rightarrow\int_{0}%
^{T}\left\Vert Gu\right\Vert _{L^2}^{2}dt=0\text{,} %
\end{equation}
which implies from the definition of the operator $G$ as in \eqref{k5a} that 
\[u(t,x) = \int_\T g(y)u(t,y) \; dy =: c(t) \quad \mbox{on} \;\; (0,T) \times \omega,\]
where $\omega = \{x \in \T : g > 0\}$.
Thus, taking $n\rightarrow\infty$, we obtain from (\ref{k35}) that%
\[\left\{
\begin{array}
[c]{lll}%
\partial_{t}u+(-1)^{j+1} \px^{2j+1}u=f &  &
\text{on }(0,T) \times \T  \text{,}\\
u(t,x)  =c\left(  t\right)  &  & \text{on } (0,T) \times \omega  \text{.}%
\end{array}
\right.  \]
We prove now that $f=-\frac{1}{2}\partial_{x}\left(  u^{2}\right)  $. Indeed,
pick $w_{n}=u_{n}-u$ and $f_{n}=-\frac{1}{2}\partial_{x}\left(  u_{n}%
^{2}\right)  -f-K_{0}u_{n}$. 
Remark that from \eqref{k48a},
\begin{equation}
\int_{0}^{T}\left\Vert Gw_{n}\right\Vert _{L^2}^{2}dt=\int_{0}^{T}\left\Vert
Gu_{n}\right\Vert _{L^2}^{2}dt+\int_{0}^{T}\left\Vert Gu\right\Vert _{L^2}%
^{2}dt-2\int_{0}^{T}\left(  Gu_{n},Gu\right) dt\rightarrow0\text{.}
\label{k49}%
\end{equation}
Since $w_{n}\rightharpoonup0$ in $X^{0,\frac{1}{2}}_{T}$ we infer from Rellich theorem that%
\[
\int_{\mathbb{T}}g\left(  y\right)  w_{n}\left( t, y \right)  dy\rightarrow
0\text{ in }L^{2}\left(  0,T\right)  \text{.}%
\]
Combined with (\ref{k49}), this yields%
\[
\|gw_n\|_{L^2(0,T;L^2)} \le \|Gw_n\|_{L^2(0,T; L^2)} + \left\|\int_\T g(y) w_n(\cdot ,y) \; dy\right\|_{L^2(0,T)} \rightarrow 0.
\]
Then, $w_n$ and $f_n$ satisfy
\[
\partial_{t}w_{n} + (-1)^{j+1}\px^{2j+1} w_n=f_{n}%
\]%
and
\[f_{n}\rightarrow0\text{ in }X^{-1,-\frac{1}{2}}_{T}  \quad \mbox{and} \quad w_{n}\rightarrow0\text{ in }L^{2}\left(  0,T;L^{2}\left(  \tilde{\omega
}\right)  \right)  \text{,}%
\]
where $\tilde{\omega}:=\left\{  g>\frac{\left\Vert g\right\Vert _{L^{\infty
}  }}{2}\right\}  $.
Applying the Proposition \ref{prop_a} with $b=\frac{1}{2}$ and $b^{\prime}=0$, we conclude that
\[
w_{n}\rightarrow0\text{ in }L_{loc}^{2}\left(  0,T ; L^{2}\left(  \mathbb{T}\right)  \right)  \text{.}%
\]
Consequently, $u_{n}^{2}$ tends to $u^{2}$ in $L_{loc}^{1}\left(0,T ; L^{2}\left(  \mathbb{T}\right)  \right)  $ and $\partial
_{x}\left(  u_{n}^{2}\right)  $ tends to $\partial_{x}\left(  u^{2}\right)  $
in distributional sense. Therefore, $f=-\frac{1}{2}\partial_{x}\left(
u^{2}\right)  $ and $u\in X_T^{0, \frac{1}{2}}$ satisfies%
\[
\left\{
\begin{array}
[c]{lll}%
\partial_{t}u+(-1)^{j+1} \px^{2j+1}u+\frac
{1}{2}\partial_{x}\left(  u^{2}\right)  =0 &  & \text{on }(0,T) \times \T \text{,}\\
u\left(  t, x \right)  =c\left(  t\right)  &  & \text{on }(0,T) \times \omega  \text{.}%
\end{array}
\right.
\]
The first equation give $c^{\prime}\left(  t\right)  =0$ which, combined with
the unique continuation property (Corollary \ref{UCP}) ensures that $u(t,x) =c$, for some $c\in\mathbb{R}$. Since $\left[  u\right]  =0$, then
$c=0$ and%
\[
u_{n}\rightarrow0\text{ in }L_{loc}^{2}\left(  \left(  0,T\right)
,L^{2}\left(  \mathbb{T}\right)  \right)  \text{.}%
\]
To end the proof of {\bf Case (a)}, we take a particular time $t_{0}\in\left[  0,T\right]  $ such that%
\[
u_{n}\left(  t_{0}\right)  \rightarrow0\text{ in }L^{2}\left(
\mathbb{T}\right)  \text{.}%
\]
A direct computation gives
\[
\left\Vert u_{0,n}  \right\Vert _{L^2}^{2}=\left\Vert u_{n}\left(
t_{0}\right)  \right\Vert _{L^2}^{2}+\int_{0}^{t_{0}}\left\Vert Gu_{n}(t)%
\right\Vert _{L^2}^{2}dt\text{,}%
\]
and this makes a contradiction, since the right-hand side converges to $0$ while the left-hand side does not by the hypothesis.

\medskip

\noindent \textbf{\textit{Case (b): $\xi=0$}}. Note from \eqref{k47} that $\xi_{n}>0$, for all $n \in \N$. For each $n \in \N$, set $v_{n}=\frac{u_{n}}%
{\xi_{n}}$. Then $v_n$ satisfies
\[
\partial_{t}v_n+(-1)^{j+1} \px^{2j+1}v_n+K_{0}v_{n}+\frac{\xi_{n}}{2}\partial_{x}\left(  v_{n}%
^{2}\right)  =0%
\]%
with
\begin{equation}
\int_{0}^{T}\left\Vert Gv_{n}\right\Vert _{L^2}^{2}dt<\frac{1}{n}
\label{k50}
\end{equation}
and%
\[\left\Vert v_{0,n}  \right\Vert _{L^2}=1\text{.} \]
Analogously as above, by the compactness of embedding, $v_n$ (by extracting the subsequences if needed, but still denote by $v_n$) satisfies
\[
v_{n}\rightarrow v\text{ in }X^{-1,-\frac{1}{2}}_{T}\cap X^{-1,0}_{T} \quad \mbox{and} \quad \xi_{n}\partial_{x}\left(  v_{n}^{2}\right)  \rightarrow0\text{ in }%
X^{0,-\frac{1}{2}}_{T}\footnote{It follows from the boundedness of $\px(v_n^2)$ in $X_T^{0,\frac12}$ and $\xi_n \to 0$.}.%
\]
From (\ref{k50}), we have
\[
\int_{0}^{T}\left\Vert Gv\right\Vert _{L^2}^{2}dt=0\text{,}%
\]
which ensures that $v$ solves%
\begin{equation}
\left\{
\begin{array}
[c]{lll}%
\partial_{t}v+(-1)^{j+1} \px^{2j+1}v=0 &  &
\text{on } (0,T) \times \T  \text{,}\\
v\left(  t, x \right)  =c\left(  t\right)  &  & \text{on }(0,T) \times \omega  \text{.}%
\end{array}
\right.  \label{k52}%
\end{equation}
Thanks to Holmgren's uniqueness theorem (see e.g. \cite{hormander}), we conclude $c\left(  t\right)  =c\in\mathbb{R}$.
Moreover, as $\left[  v\right]  =0$ then
$c=0$. According to (\ref{k50}), $Gv_n$ converges to $0$ in $L^2(0,T;L^2)$, thus so
\[
K_{0}v_{n}\rightarrow0\text{ in }X^{-1,-\frac{1}{2}}_{T}\text{.}%
\]
Applying Proposition \ref{prop_a} as in {\bf Case (a)}, we have
\[
v_{n}\rightarrow0\text{ in }L_{loc}^{2}\left(0,T ; L^{2}\left(  \mathbb{T}\right)  \right)  \text{,}%
\]
thus we achieve the same conclusion, showing the result. \qed

\begin{remark}
In view of the hypotheses in Proposition \ref{prop_a}, the proof above is still valid, even if $f_n$ converges to $0$ only in larger class (e.g., $X^{-j, -\frac12}$). In other words, the property of propagation of compactness (Proposition \ref{prop_a}) established in this work enables one to extend Theorem \ref{exponential_global} for rougher solutions.

\end{remark}

\subsection{Proof of Theorem \ref{exponential_global}} Once again, consider $\lambda = 0$ in  $K_{\lambda}$. Now we prove that the solution $u$ of \eqref{k35} decays exponentially in $H^s$-level for any $s>0$. We first prove it when $s = 2j+1$. Then, interpolating with the result in Section \ref{sec:5.1}, we obtain the conclusion for $0 \le s \le 2j+1$. The similar argument can be applied for $s = (2j+1)\N$, and thus we complete the proof.

\medskip

Fix $s=2j+1$, $j\in\mathbb{N}$, and pick any $R>0$ and any $u_0\in H^{2j+1}(\mathbb{T})$ with $\left\Vert u_{0}\right\Vert _{L^2\left(  \mathbb{T}\right)  }\leq R$. Let $u$ solution of \eqref{k35} with initial condition $u_0$, and set $v=u_t$. Then, $v$ satisfies
\begin{equation}
\left\{
\begin{array}
[c]{lll}%
\partial_{t}v+(-1)^{j+1} \px^{2j+1}v+\partial_x(uv)=-K_0v &  & \text{in } \{t>0\} \times \T \text{,}\\
v\left(  0,x\right)  =v_{0}\left(  x\right)  &  & \text{on }\mathbb{T}\text{,}%
\end{array}
\right.  \label{k35_r}%
\end{equation}
where 
\begin{equation}\label{eq:v_0}
v_0 = -K_0u_0 -u_0u'_0 -(-1)^{j+1}\partial_x^{2j+1}u_0.
\end{equation}
According to Theorem \ref{thm:GWP} and the exponential decay \eqref{exp_r}, for any $T > 0$ there exists constants  $C > 0$ and $\gamma > 0$ depending only on $R$ and $T$ such that
$$
\|u(\cdot, t)\|_{Y^{0}_{[t, t+T]}} \leq C e^{-\gamma t}\left\|u_{0}\right\|_{L^2} \quad \mbox{for all} \;\; t \geq 0.
$$
Thus, for any $\epsilon>0$, there exists a $t^{*}>0$ such that if $t \geq t^{*}$, one has
$$
\|u(\cdot, t)\|_{Y^{s}_{[t, t+T]}} \leq \epsilon.
$$
At this point we need an exponential stability result for the linearized system
\begin{equation}
\left\{
\begin{array}
[c]{lll}%
\partial_{t}w+(-1)^{j+1} \px^{2j+1}w+\partial_x(aw)=-K_0w &  & \text{in } \{t>0\} \times \T \text{,}\\
w\left(  0,x\right)  =w_{0}\left(  x\right)  &  & \text{on }\mathbb{T}\text{,}%
\end{array}
\right.  \label{k35_ra}%
\end{equation}
where $a \in Y^s_T$ is a given function.
\begin{lemma}\label{a_function}
Let $s \geq 0$ and $a \in Y^s_T$ for all $T>0 .$ Then for any $\gamma^{\prime} \in(0, \gamma)$ there exist $T>0$ and $\beta>0$ such that if
\begin{equation}\label{eq:hypothesis a}
\sup _{n \geq 1}\|a\|_{Y^{s}_{[n T, (n+1) T]}} \leq \beta
\end{equation}
then
$$
\|w(\cdot, t)\|_{H^s} \lesssim e^{-\gamma^{\prime} t}\left\|w_{0}\right\|_{H^s} \quad \mbox{for all} \;\;  t \geq 0.
$$
Remark that the implicit constant depends on $\|w_0\|_{H^s}$, but not $w_{0}$. 
\end{lemma}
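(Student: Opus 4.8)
The plan is to regard \eqref{k35_ra} as a perturbation of the exponentially stable damped linear flow $\pt w + (-1)^{j+1}\px^{2j+1} w + K_0 w = 0$, whose solution operator I denote by $W_{K_0}(t)$, and to run a discrete iteration over the time windows $I_n := [nT,(n+1)T]$. The two ingredients are: (i) the $H^s$ exponential stability of the unperturbed flow, $\norm{W_{K_0}(t) w_0}_{H^s} \le M e^{-\gamma t}\norm{w_0}_{H^s}$, which is the linear closed-loop stability furnished by the spectral and linear analysis, so that $\gamma$ is the reference rate appearing in the statement; and (ii) the Bourgain-space bilinear estimate, Lemma \ref{lem:Xsb}(4), which turns the transport term $\px(aw)$ into a factor proportional to the small quantity $\norm{a}_{Y^s_{I_n}}$. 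Since $K_0 = GG^{\ast}$ is bounded, indeed smoothing, on $H^s$, the $X^{s,b}$ and $Y^s$ estimates of Lemma \ref{lem:Xsb} transfer from the free group $W(t)$ to $W_{K_0}(t)$ on each window of fixed length $T$ with constants depending only on $T$ and $\norm{K_0}$, and I will use this freely.

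First, I would establish the window estimate. Writing the Duhamel formula on $I_n$ with initial time $nT$,
\[ w(t) = W_{K_0}(t-nT) w(nT) - \int_{nT}^t W_{K_0}(t-\tau)\px(aw)(\tau)\, d\tau, \]
Lemma \ref{lem:Xsb}(2)--(3) together with the bilinear bound Lemma \ref{lem:Xsb}(4) gives
\[ \norm{w}_{Y^s_{I_n}} \le C\norm{w(nT)}_{H^s} + C\norm{a}_{Y^s_{I_n}}\norm{w}_{Y^s_{I_n}} \le C\norm{w(nT)}_{H^s} + C\beta\norm{w}_{Y^s_{I_n}}. \]
Choosing $\beta$ so small that $C\beta \le \tfrac12$, the last term is absorbed and $\norm{w}_{Y^s_{I_n}} \le 2C\norm{w(nT)}_{H^s}$; in particular, by Lemma \ref{lem:Xsb}(1), $\sup_{t\in I_n}\norm{w(t)}_{H^s} \lesssim \norm{w(nT)}_{H^s}$.

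Next comes the endpoint estimate and the iteration. Evaluating the Duhamel formula at $t=(n+1)T$, using ingredient (i) for the homogeneous part and Lemma \ref{lem:Xsb}(3)--(4) for the inhomogeneous part,
\[ \norm{w((n+1)T)}_{H^s} \le M e^{-\gamma T}\norm{w(nT)}_{H^s} + C\norm{a}_{Y^s_{I_n}}\norm{w}_{Y^s_{I_n}} \le \big(M e^{-\gamma T} + 2C^2\beta\big)\norm{w(nT)}_{H^s}. \]
Given $\gamma' \in (0,\gamma)$, I first fix $T$ so large that $M e^{-\gamma T} \le \tfrac12 e^{-\gamma' T}$, which is possible because $e^{-(\gamma-\gamma')T}\to 0$, and then shrink $\beta$ so that also $2C^2\beta \le \tfrac12 e^{-\gamma' T}$. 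With these choices $\norm{w((n+1)T)}_{H^s} \le e^{-\gamma' T}\norm{w(nT)}_{H^s}$, and iterating from $n=0$ yields $\norm{w(nT)}_{H^s} \le e^{-\gamma' nT}\norm{w_0}_{H^s}$ for every $n$. Finally, for $t\in I_n$ one has $nT \ge t - T$, so the window bound gives $\norm{w(t)}_{H^s} \lesssim \norm{w(nT)}_{H^s} \le e^{-\gamma' nT}\norm{w_0}_{H^s} \le e^{\gamma' T}e^{-\gamma' t}\norm{w_0}_{H^s}$, which is the claimed decay; linearity makes this bound homogeneous in $w_0$, consistent with the stated remark.

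The main obstacle I anticipate is not the iteration itself but the clean interface between the two ingredients: one must run the decaying semigroup $W_{K_0}$ and the Bourgain-space bilinear machinery within the same Duhamel identity, i.e. verify that the inhomogeneous $W_{K_0}$-Duhamel operator obeys the estimate \eqref{eq:inhomogeneous} with a constant uniform over the length-$T$ windows. This requires transferring the free-group estimates to the damped flow via the boundedness of $K_0$ and controlling how the time-localization constants behave under the translation $I_0 \mapsto I_n$, which is legitimate since \eqref{k35_ra} is autonomous apart from the coefficient $a$; it is also where the hypothesis $s \ge 0$, guaranteeing that Lemma \ref{lem:Xsb}(4) applies, is used. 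Once this uniformity is secured, the order of quantifiers, choosing $T$ first and then $\beta$, is forced by the inequality $\gamma' < \gamma$ exactly as above.
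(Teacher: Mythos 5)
Your proof is correct and follows essentially the same route as the paper: Duhamel's formula with respect to the damped propagator $W_0(t)=e^{-t\left((-1)^{j+1}\px^{2j+1}+K_0\right)}$, linear exponential decay for the homogeneous part plus the bilinear estimate of Lemma \ref{lem:Xsb} (4) for $\px(aw)$ on each window $[nT,(n+1)T]$, and the same order of quantifiers (first $T$ large so the linear rate beats $e^{-\gamma' T}$, then $\beta$ small). The only cosmetic difference is that you bound $\norm{w}_{Y^s_{I_n}}$ by absorption using the smallness of $\beta$, whereas the paper invokes the a priori bound $\alpha_{T,s}\left(\norm{a}_{Y^s_T}\right)\norm{w_0}_{H^s}$ from the well-posedness theory (as in Theorem \ref{thm:GWP}); your explicit verification that the window constants are uniform in $n$, by translation invariance of the autonomous part, is a point the paper leaves implicit.
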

Suppose Lemma \ref{a_function} is valid. Choose $\epsilon<\beta$, and then apply Lemma \ref{a_function} to \eqref{k35_r} to obtain
$$
\|v(\cdot, t)\|_{L^2} \lesssim e^{-\gamma^{\prime}\left(t-t^{*}\right)}\left\|v\left(\cdot, t^{*}\right)\right\|_{L^2},
$$
for any $t \geq t^{*}$, or equivalently
\[\|v(\cdot, t)\|_{L^2} \lesssim e^{-\gamma^{\prime} t}\left\|v_{0}\right\|_{L^2},\]
for any $t \geq 0$.
From
$$
(-1)^{j+1} \px^{2j+1}u=-K_{0} u-u\partial_{x} u-v,
$$ 
a direct computation gives 
\[\|u(t)\|_{H^s} \lesssim \|u(t)\|_{L^2} + \|u(t)\|_{L^2}\|u(t)\|_{H^s} + \|v(t)\|_{L^2}.\]
Applying Theorem \ref{exponential_global} for $s=0$ established in Section \ref{sec:5.1}, Theorem \ref{thm:GWP} and Lemma \ref{a_function} with \eqref{eq:v_0} to the right-hand side, we obtain
$$
\|u(\cdot, t)\|_{H^s} \lesssim C e^{-\gamma^{\prime} t}\left\|u_{0}\right\|_{H^s},
$$
for any $t \geq 0$. Note that the implicit constant here depends only on $R$. This proves Theorem \ref{exponential_global} for $s = 2j+1$. Moreover applying Lemma \ref{a_function}  for $ w = u_{1}-u_{2}$ and $a=u_{1}+u_{2}$ when $u_1, u_2$ are two different solutions, we obtain the Lipchitz stability estimate, which is required for interpolation:
$$
\left\|\left(u_{1}-u_{2}\right)(\cdot, t)\right\|_{0} \leq C e^{-\gamma^{\prime} t}\left\|\left(u_{1}-u_{2}\right)(\cdot, 0)\right\|_{0}.
$$
Thus, it remains to prove Lemma \ref{a_function}. \qed
\begin{proof}[Proof of Lemma \ref{a_function}] 
Let $T > 0$ and $s \ge 0$ be given, and $a \in Y^s_T$. Similarly as the proof of Theorem \ref{thm:GWP}, we can show that the system \eqref{k35_ra} admits a unique solution $Y_T^s \cap C_TH^{s}$, and the solution $u$ satisfies
\begin{equation}\label{eq:global bound r}
\norm{u}_{Y_T^s} \le \alpha_{T,s}(\|a\|_{Y^s_T})\norm{u_0}_{H^s},
\end{equation}
where $\alpha_{T,s}$ is positive nondecreasing continuous function. By Duhamel's principle, the solution $u$ to \eqref{k35_ra} is equivalent to the following integral form:
$$
w(t)=W_{0}(t) w_{0}-\int_{0}^{t} W_{0}(t-s) \partial_{x}(a w)(s) \; ds,
$$
where $W_{0}(t)=e^{-t\left((-1)^{j+1} \px^{2j+1}+K_{0}\right)} .$ Then, thanks to Proposition \ref{zhang_2}, Lemma \ref{lem:modified Xsb} and \eqref{eq:global bound r}, we get
\begin{equation}\label{eq:GWP estimate}
\|w(\cdot, T)\|_{H^s} 
\leq C_{1} e^{-\gamma T}\left\|w_{0}\right\|_{H^s}+C_{2}\|a\|_{Y^{s}_{T}}\alpha_{T,s}\left(\|a\|_{Y^s_T}\right)\left\|w_{0}\right\|_{H^s},
\end{equation}
where $C_{1}>0$ is independent of $T$ while $C_{2}$ may depend on $T$. Let
$$
y_{n}=w(\cdot, n T) \quad \mbox{for} \;\; n \in \N.
$$
Then, similarly as \eqref{eq:GWP estimate}, we obtain for each $n \in \N$ that
$$
\left\|y_{n+1}\right\|_{H^s} \leq C_{1} e^{-\gamma T}\left\|y_{n}\right\|_{H^s}+C_{2,n}\|a\|_{Y^{s}_{[n T,(n+1) T]}} \alpha_{T,s}\left(\|a\|_{Y^{s}_{[n T,(n+1) T]}}\right)\left\|y_{n}\right\|_{H^s}.
$$
By choosing appropriate $T>0$ large enough and $\beta>0$ small enough such that
$$
C_{1} e^{-\gamma T}+C_{2} \beta \alpha_{T,s}(\beta)=e^{-\gamma^{\prime} T},
$$
we conclude that
\begin{equation}\label{eq:n decay}
\left\|y_{n+1}\right\|_{H^s} \leq e^{-\gamma^{\prime} T}\left\|y_{n}\right\|_{H^s}
\end{equation}
for any $n \geq 1$ as long as \eqref{eq:hypothesis a} is assumed. 
By using \eqref{eq:n decay} inductively, we obtain
$$
\left\|y_{n}\right\|_{H^s} \leq e^{-n \gamma^{\prime} T}\left\|y_{0}\right\|_{H^s}
$$
for any $n \geq 1$, which implies that
$$
\|w(\cdot, t)\|_{H^s} \leq C e^{-\gamma^{\prime} t}\left\|w_{0}\right\|_{H^s}
$$
for all $t \geq 0$. This completes the proof.
\end{proof}
\section{Concluding remarks and open issues} \label{sec7}
In this work we treat the global control issues for the general higher-order KdV type equation on periodic domain
\begin{equation}
\begin{cases}
\partial_{t}u+(-1)^{j+1} \px^{2j+1}u+u\partial_xu=Gh,\\
u(0,x) = u_0,
\end{cases}
\quad (t,x) \in (0,T) \times \T,
\label{k35a}%
\end{equation}
where, $Gh$ is defined as \eqref{k5a} and can take also the form $Gh:=K_{\lambda}u (t,x)  \equiv GG^{\ast}L_{\lambda}^{-1}u(t,x)$. The results presented in the manuscript recovered previous global control problems for the KdV and Kawahara equations, when $j=1$ and $2$, respectively. Nevertheless, presents global control results for a general KdV type equation, which is more complex than the studies previously presented. 

Precisely, thanks to the smoothing properties of solutions in Bourgain spaces we are able to prove the \textit{Strichartz estimates} and \textit{propagation of singularities} associated with the solution of the linear system of \eqref{k35a}. With this in hand, we prove an \textit{observability inequality} for the solutions of the system\eqref{k35a}.  This helps us prove the main results of the article. Even though it has a generalist character, the work presents interesting problems from the mathematical point of view, which we will detail below.

\subsection{Time-varying feedback law} A natural question that arises is related to global stabilization with an arbitrary large decay rate. This can be obtained by using a time-varying feedback law. As for the KdV and Kawahara equations, the time-varying feedback control law  for the higher-order KdV type equation can be found. Precisely,  it is possible to construct a continuous time-varying feedback law $K \equiv K(u,t)$ such that a semi-global stabilization holds with an arbitrary large decay rate in the Sobolev space $H^{s}(\mathbb{T})$ for any $s\geq 0.$ In fact, $K$ has the following form
$$K(u,t):= \rho\left(\|u\|^{2}_{H^{s}(\mathbb{T})}\right)\left[\theta\left(\frac{t}{T}\right) K_{\lambda}(u)+\theta\left(\frac{t}{T}-T\right) G G^{\ast}u\right] + \left(1-\rho\left(\|u\|^{2}_{H^{s}(\mathbb{T})}\right)\right)G G^{\ast}u,  $$
where $\rho \in C^{\infty}(\mathbb{R}^{+};[0,1])$ is a function such that
for some $r_{0}\in (0,1),$ we have
\[\rho(r):=\left\{
\begin{array}
[c]{lll}%
1, &  &
\text{for } r\leq r_{0} ,\\
0,  &  & \text{for } r\geq 1
\end{array}
\right.  \]
and $\theta \in C^{\infty}(\mathbb{R};[0,1])$ is a function with the following properties: $\theta(t+2)=\theta(t)$, for all $t\in \mathbb{R}$ and 
\[\theta(t):=\left\{
\begin{array}
[c]{lll}%
1,  &  & \text{for } \delta \leq t \leq 1-\delta \text{,}\\
0,  &  & \text{for } 1 \leq t \leq 2 \text{,}%
\end{array}
\right.\]
for some $\delta \in (0,\frac{1}{10}).$ Then, the following result holds true.
\begin{theorem} Let $\lambda>0$ and let $K=K(u, t)$ be as above. Pick any $\lambda^{\prime} \in(0, \lambda)$ and any $\lambda^{\prime \prime} \in\left(\lambda^{\prime} / 2,\left(k+\lambda^{\prime}\right) / 2\right) .$ Then there exists a time $T_{0}>0$ such that for $T>T_{0}$, $t_{0} \in \mathbb{R}$ and $u_{0} \in H^{s}(\mathbb{T})$, the unique solution of the closed-loop system
$$
\partial_{t}u+(-1)^{j+1} \px^{2j+1}u+u\partial_xu=-K(u, t),\quad (t,x) \in \R \times \T
$$
satisfies
$$
\|u(\cdot, t)-[u_{0}]\|_{H^s} \leq \gamma_{s, \mu}\left(\left\|u_{0}-[u_{0}]\right\|_{H^s}\right) e^{-\lambda^{\prime \prime}\left(t-t_{0}\right)}\left\|u_{0}-[u_{0}]\right\|_{H^s}, \text { for all } t>t_{0},
$$
where $\gamma_{s}$ is a nondecreasing continuous function.
\end{theorem}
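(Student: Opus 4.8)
The plan is to adapt the Coron--Rosier construction of time-varying feedback laws to the present higher-order setting, leaning on the global stabilization result (Theorem \ref{exponential_global}), the fast local feedback $K_\lambda$ (Theorem \ref{stability_zhang}), and the closed-loop well-posedness theory. By conservation of mass we may assume $[u_0]=0$ and work in $H_0^s(\mathbb{T})$. The first step is to establish that the closed-loop system has a unique global solution: the feedback $K(u,t)$ is smooth and periodic in $t$, and locally Lipschitz and uniformly bounded in $u$ (since $\rho$ and $\theta$ are smooth and bounded and both $K_\lambda$ and $GG^{\ast}$ are bounded on $H_0^s$), so the fixed-point scheme underlying Theorem \ref{thm:GWP} and Lemma \ref{a_function} applies on each period, yielding a solution in $Y^s_T \cap C_T H^s$ together with an a priori bound $\|u\|_{Y^s_{[t_0+nT,\, t_0+(n+1)T]}} \le \alpha_{s,\mu}\!\left(\|u(t_0+nT)\|_{H^s}\right)$.

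The core of the argument is a period-wise decay estimate on an interval $I_n = [t_0+nT,\, t_0+(n+1)T]$, obtained by distinguishing two regimes according to the cut-off $\rho$. In the large regime, where $\|u\|^2_{H^s}\ge 1$ forces $\rho\equiv 0$ and the feedback collapses to $GG^{\ast}u$, Theorem \ref{exponential_global} drives the solution into the ball $\{\|u\|_{H^s}\le r_0\}$ in finite time and yields a fixed contraction over each period. In the small regime, where $\|u\|^2_{H^s}\le r_0$ gives $\rho\equiv 1$, the feedback reduces to the genuinely time-varying combination $\theta(\tfrac{t}{T})K_\lambda u + \theta(\tfrac{t}{T}-T)GG^{\ast}u$; on the subintervals where $\theta(\tfrac{t}{T})=1$ the fast feedback $K_\lambda$ is active and, by Theorem \ref{stability_zhang}, the associated damped flow contracts at rate $\lambda$. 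Since $\theta$ is supported on roughly half of each period, time-averaging produces an effective rate of order $\lambda/2$, which explains the admissible window $\lambda''\in(\lambda'/2,\,(\lambda+\lambda')/2)$; choosing $T$ large and absorbing the transition layer $\{0<\rho<1\}$ into the contraction constant, one obtains $\|u(t_0+(n+1)T)\|_{H^s}\le e^{-\lambda'' T}\|u(t_0+nT)\|_{H^s}$, which iterates to the claimed global decay at rate $\lambda''$.

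The delicate point, and where I expect the main obstacle, is to verify that the nonlinearity $u\partial_x u$ does not spoil the linear decay rate on the fast-feedback subintervals. Just as in the proof of Lemma \ref{a_function}, I would treat it perturbatively through Duhamel's formula, writing the solution against the damped propagator associated with $(-1)^{j+1}\partial_x^{2j+1}+K_\lambda$ and controlling the Duhamel term $\int_0^t \cdots \partial_x(u^2)(\tau)\,d\tau$ by the bilinear estimate of Lemma \ref{lem:Xsb} together with the $L^4$-Strichartz estimate of Lemma \ref{strichartz_estimates}. Because $\|u\|_{H^s}\le r_0$ throughout the small regime, the nonlinear contribution is quadratically small and can be absorbed by shrinking $r_0$ (equivalently the threshold $\beta$, hence enlarging $T$), exactly in the spirit of the analogue of Lemma \ref{a_function} with $K_0$ replaced by $K_\lambda$.

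Finally, to reach every $s\ge 0$ I would first prove the estimate at the levels $s\in(2j+1)\mathbb{N}$ by differentiating the equation in $t$ and applying the decay to $v=u_t$, exactly as in the proof of Theorem \ref{exponential_global}, and then fill in the intermediate regularities by interpolation, using the Lipschitz stability estimate that results from applying the period-wise bound to the difference of two solutions. The nondecreasing dependence on $\|u_0-[u_0]\|_{H^s}$ is inherited from the a priori bounds $\alpha_{s,\mu}$, producing the stated function $\gamma_{s,\mu}$.
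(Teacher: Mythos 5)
First, a framing remark: the paper itself never proves this theorem --- it is stated without proof in Section \ref{sec7} (``Concluding remarks and open issues'') as the higher-order analogue of the time-varying feedback results for KdV \cite{Laurent} and Kawahara \cite{zhang1} --- so your proposal can only be judged against that intended scheme. Judged so, your architecture is the correct one: closed-loop well-posedness via the fixed-point machinery behind Theorem \ref{thm:GWP} and Lemma \ref{a_function}, a dichotomy between the regime $\rho=0$ (pure $GG^{\ast}$ damping, handled by the global result) and $\rho=1$ (time-windowed strong damping $K_{\la}$, handled by the local result), perturbative control of $u\px u$ in the $Y^s_T$ framework, and the ladder $s=0$, then $s\in(2j+1)\N$, then interpolation via the Lipschitz estimate. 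These are exactly the ingredients the authors have in mind.

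Two steps, however, are genuine gaps rather than routine omitted detail. (i) Rate bookkeeping: the constant $k$ in the window $(\la'/2,(k+\la')/2)$ is not $\la$; it is the fixed (possibly small) decay rate of the weak damping $GG^{\ast}$, i.e.\ the $\gamma$ of Theorem \ref{exponential_global}. Since the strong feedback acts only on a window of length $(1-2\delta)T$ of each period $2T$ (note the period is $2T$, not $T$) and the weak feedback on the complementary half, the contraction per period is at best of size $e^{-[(1-2\delta)\la'+k]T}$ up to constants, which yields precisely the rates below $(\la'+k)/2$; under your reading $(\la+\la')/2$, the upper endpoint is unreachable by time-averaging, because the $GG^{\ast}$ half contributes only $k$ no matter how large $\la$ is. (ii) You invoke Theorems \ref{exponential_global} and \ref{stability_zhang} as black boxes, but their hypotheses require the corresponding feedback to act for \emph{all} $t$, whereas in the closed loop the feedback changes the moment $\norm{u(t)}^2_{H^s}$ crosses $1$ or $r_0$. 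One needs regime-wise estimates valid only while the solution remains in a regime, an exit-time bound in terms of $R$, and --- the real work --- an analysis of the transition shell $r_0<\norm{u}_{H^s}^2<1$: there, during the $\theta(t/T)=1$ window, the feedback is the mixture $\rho K_{\la}u+(1-\rho)GG^{\ast}u$, which is not obviously dissipative in any fixed norm (the cross term $(G^{\ast}u, G^{\ast}L_{\la}^{-1}u)$ has no sign; dissipativity of $K_{\la}$ alone holds only in the equivalent norm $(L_{\la}^{-1}\cdot,\cdot)^{1/2}$), so a priori the solution could grow by a factor $e^{C_{\la}T}$ in the shell and re-enter the large regime. ``Absorbing the transition layer into the contraction constant'' therefore presupposes exactly what must be proved: that the shell is crossed in a bounded time and never re-entered from below. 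Closing this requires, at minimum, choosing $r_0$ small in terms of $\la$ so that the local result applies throughout $\{\norm{u}_{H^s}^2\le r_0\}$, together with an invariance (no-return) argument for $\{\norm{u}_{H^s}^2<1\}$; neither appears in your proposal.
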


\subsection{Low regularity control results} Observe that the results presented in this work are verified in $H^s(\mathbb{T})$, when $s\geq0$. However, by comparing with \cite{HongKwak}, a natural question appears.

\vspace{0.2cm}

\noindent\textbf{Problem $\mathcal{A}$:} Is it possible to prove control results for the system \eqref{k35a} with $-j/2\le s<0$? 

\vspace{0.2cm}

The answer for this question may be very technical and the well-posedness result probably will be the biggest challenge. Additionally, the unique continuation property needs to be proved and appears to be also a hard problem. Thus, the following open issues naturally appear. 

\vspace{0.2cm}

\noindent\textbf{Problem $\mathcal{B}$:} Is the system \eqref{k35a} globally well-posed in $H^s(\mathbb{T})$, for $-j/2 \le s<0$? 

\vspace{0.2cm}

\noindent\textbf{Problem $\mathcal{C}$:} Is the unique continuation property, presented in Lemma \ref{unique_a},  true for $-j/2 \le s<0$?

\appendix
\renewcommand*{\thetheorem}{\Alph{section}.\arabic{theorem}}
\setcounter{theorem}{0}

\section{Controllability and stability results: Linear problems} \label{sec3}
Let us consider the linear  open loop control system
\begin{equation}\label{k8}
\left\{\begin{array}{ll}
\pt u + (-1)^{j+1} \px^{2j+1}u =Gh, \hspace{1em} &(t,x) \in \R \times \T, \\
u(0,x) = u_0(x), &x \in \T,
\end{array}
\right.
\end{equation}
where the operator $G$ is defined as in \eqref{k5a} and $h=h(t,x)$ is the control input.

Consider the $L^2$--basis $\{\phi_k\}_{k\in\mathbb{Z}}$, thus the solution $u$ of  \eqref{k8} can be expressed in the form%
\begin{equation}
u(t,x)  =\sum_{k \in \Z}\left(  e^{\lambda_{k}%
t}u_{0,k}+\int_{0}^{t}e^{\lambda_{k}\left(  t-\tau\right)  }G_{k}\left[
h\right]  \left(  \tau\right)  d\tau\right)  \phi_{k}\left(  x\right),
\label{k9}%
\end{equation}
where $u_{0,k}$ are the Fourier coefficients of
$u_{0}$ and $G[h]$ are
\begin{equation}\label{eq:coefficients}
u_{0,k}=(u_{0},\phi_{k}) \quad \mbox{and} \quad G_{k}[h]=(Gh,\phi
_{k})=(h,G\phi_{k})%
\end{equation}
for $k \in \Z$, respectively. Moreover, for given $s\in\mathbb{R}$, if
$u_{0}$ $\in H^{s}\left(  \mathbb{T}\right)  $ and $h\in L^{2}\left(
0,T;H^{s}\left(  \mathbb{T}\right)  \right)  $, the function given by
(\ref{k9}) belongs to the space $C([0,T];H^{s}(\mathbb{T}))$. Now, we are in position to prove control results for the system \eqref{k8}.

\subsection{Controllability result} The first result means that system (\ref{k8}) is exactly controllable in time $T>0$ and can be read as follows.

\begin{theorem}\label{thm:EC}
Let $T>0$ and $s\in\mathbb{R}$ be given. There exists a bounded
linear operator%
\[
\Phi:H^{s}(\mathbb{T})\times H^{s}(\mathbb{T})\rightarrow L^{2}\left(
0,T;H^{s}(\mathbb{T})\right)
\]
such that for any $u_{0},u_{1}\in H^{s}(\mathbb{T})$ with $[u_0] = [u_1]$, if one chooses
$h=\Phi\left(  u_{0},u_{1}\right)  $ in (\ref{k8}), then the system (\ref{k8})
admits a solution $u\in C([0,T];H^{s}(\mathbb{T}))$ satisfying%
\[
\left.  u\right\vert _{t=0}=u_{0}\text{, \ }\left. u\right\vert _{t=T}%
=u_{1}\text{.}%
\]
Moreover, we have
\begin{equation}\label{eq:phi}
\norm{\Phi(u_0,u_1)}_{L^2(0,T;H^s)} \lesssim (\norm{u_0}_{H^s} + \norm{u_1}_{H^s}),
\end{equation}
here the implicit constant depends only on $T$, $\norm{g}_{H^s}$ and $\norm{g}_{H^{-s}}$.
\end{theorem}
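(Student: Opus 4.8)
The plan is to prove exact controllability by the Hilbert Uniqueness Method, reducing the construction of $\Phi$ to the boundedness and invertibility of a controllability Gramian, whose coercivity is an observability inequality for the free flow that I will deduce from the gap condition of Lemma \ref{lem:gap} via an Ingham--Beurling inequality. First I would reformulate the target condition: using \eqref{k9} and the unitarity of $W(t)$ (recall \eqref{eq:AW}), steering $u_0$ to $u_1$ is equivalent to
\[
\int_0^T W(-\tau)\,Gh(\tau)\,d\tau = W(-T)u_1 - u_0 =: z .
\]
The zero mode is automatically consistent: since $2\pi[g]=1$ one has $\int_\T Gh\,dx=0$, so $[u(t)]\equiv[u_0]$, which matches the hypothesis $[u_0]=[u_1]$ and lets me work on the mean-zero subspace $H^s_0(\T)$.

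Exploiting that $G$ is self-adjoint on $L^2_0(\T)$ and $W(-\tau)^*=W(\tau)$, I would seek the control in the form $h(\tau)=G\,W(\tau)\varphi$, which turns the reachability identity into $\Lambda\varphi=z$ for the Gramian
\[
\Lambda:=\int_0^T W(-\tau)\,G^2\,W(\tau)\,d\tau,\qquad (\Lambda\varphi,\varphi)=\int_0^T\|G\,W(\tau)\varphi\|_{L^2}^2\,d\tau .
\]
Then $\Phi(u_0,u_1):=G\,W(\cdot)\,\Lambda^{-1}\big(W(-T)u_1-u_0\big)$ is the desired operator, and \eqref{eq:phi} follows once $\Lambda$ is shown to be a bounded isomorphism with the stated $g$-dependence.

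The core step is the observability inequality $\int_0^T\|GW(\tau)\varphi\|^2\,d\tau\gtrsim\|\varphi\|^2$. Writing $\varphi=\sum_{k\neq0}\varphi_k\phi_k$ and using $GW(\tau)\varphi=\sum_{k\neq0}e^{\lambda_k\tau}\varphi_k\,(G\phi_k)$, the vector-valued Ingham--Beurling inequality (valid for any $T>0$ because Lemma \ref{lem:gap} forces $|\lambda_{k+1}-\lambda_k|\to\infty$, the finitely many frequencies $0<|k|\le j$ being absorbed by their linear independence) yields
\[
\int_0^T\|GW(\tau)\varphi\|_{H^s}^2\,d\tau\ \asymp\ \sum_{k\neq0}|\varphi_k|^2\,\|G\phi_k\|_{H^s}^2 .
\]
I would then check $\|G\phi_k\|_{H^s}\asymp\langle k\rangle^s$: since $G\phi_k=g\phi_k-(\int_\T g\phi_k)\,g$ and $\widehat{g\phi_k}$ is a translate of $\widehat g$, smoothness of $g$ gives the upper bound $\lesssim\langle k\rangle^s\|g\|_{H^s}$, while $G\phi_k\neq0$ for $k\neq0$ (as $\phi_k$ is non-constant on $\omega=\{g>0\}$) gives a uniform lower bound. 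This delivers both coercivity and boundedness of $\Lambda$ on $H^s_0$, after which inverting $\Lambda$ and reading off \eqref{eq:phi} completes the argument.

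The main obstacle I anticipate is the observability estimate at the full $H^s$ level rather than merely in $L^2$: the Ingham--Beurling machinery handles the temporal exponentials cleanly, but one must propagate the lower bound through the operator $G$ uniformly in $k$ and in $s$ — in particular justify $\|G\phi_k\|_{H^s}\gtrsim\langle k\rangle^s$ and the coercivity of $\Lambda$ on the $H^s$ scale, tracking the precise dependence on $\|g\|_{H^s}$ (from boundedness of $G$) and $\|g\|_{H^{-s}}$ (from the dual pairing in the coercivity bound). The few exceptional low frequencies where \eqref{eq:gap2-1} is unavailable must also be treated by hand, using that the $\lambda_k=ik^{2j+1}$ are pairwise distinct, so the associated exponentials stay linearly independent on $(0,T)$.
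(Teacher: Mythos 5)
Your strategy --- HUM with a controllability Gramian whose coercivity comes from an Ingham--Beurling observability inequality --- is a genuinely different route from the paper's. The paper uses the moment method: it expands the control as $h(t,x)=\sum_j h_j q_j(t)(G\phi_j)(x)$ over the biorthogonal (dual Riesz) family $\{q_j\}$ of the exponentials $\{e^{\lambda_k t}\}$, so that the moment equations decouple completely, $e^{-\lambda_k T}u_{1,k}-u_{0,k}=h_k\|G\phi_k\|_{L^2}^2$, thanks to biorthogonality and $G=G^*$ on $L^2$; the coefficients $h_k$ are then explicit (using $\beta_k=\|G\phi_k\|_{L^2}^2\geq\delta>0$), and the bound \eqref{eq:phi} is verified by a direct computation on this formula, so no operator ever has to be inverted. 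Several of your ingredients are correct and would survive in a complete proof: the reduction to $\int_0^T W(-\tau)Gh(\tau)\,d\tau=z$ with $z\in H^s_0$, the vector-valued Ingham--Beurling equivalence (which indeed follows from the scalar one applied Fourier-mode by Fourier-mode with constants uniform in the mode), and the two-sided bound $\|G\phi_k\|_{H^s}\asymp\langle k\rangle^s$ with constants involving $\|g\|_{H^{\pm s}}$; together these do give $\int_0^T\|GW(\tau)\varphi\|_{H^s}^2\,d\tau\gtrsim\|\varphi\|_{H^s}^2$ for every $s\in\R$.

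The genuine gap is the sentence ``this delivers both coercivity and boundedness of $\Lambda$ on $H^s_0$.'' Your Gramian $\Lambda=\int_0^T W(-\tau)G^2W(\tau)\,d\tau$ is symmetric with respect to the $L^2$ pairing only: the identity $(\Lambda\varphi,\varphi)=\int_0^T\|GW(\tau)\varphi\|^2\,d\tau$ uses $G^*=G$ and $W(-\tau)^*=W(\tau)$, both of which hold in $L^2$ but not in $H^s$ for $s\neq0$. In $H^s$ one gets instead
\begin{equation*}
(\Lambda\varphi,\varphi)_{H^s}=\int_0^T\bigl(GW(\tau)\varphi,\ \langle D\rangle^{-2s}G\langle D\rangle^{2s}W(\tau)\varphi\bigr)_{H^s}\,d\tau ,
\end{equation*}
which differs from $\int_0^T\|GW(\tau)\varphi\|_{H^s}^2\,d\tau$ by commutator terms $[\langle D\rangle^{2s},G]$ that are lower order but not sign-definite; so Lax--Milgram does not apply as written, and your $H^s$ observability inequality does not by itself invert $\Lambda$ on $H^s_0$ (your argument is complete only for $s=0$, whereas the theorem is claimed for all $s\in\R$). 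Note this is also not the obstacle you flagged (constant tracking); it is a structural duality mismatch: controllability in $H^s$ is dual to observability in $H^{-s}$, not in $H^s$. Two standard repairs: (i) take the HUM ansatz with the $H^s$-adjoint of $G$, i.e.\ $h(\tau)=\langle D\rangle^{-2s}G\langle D\rangle^{2s}W(\tau)\varphi$; the resulting Gramian is $H^s$-self-adjoint and its quadratic form equals $\int_0^T\|GW(\tau)\langle D\rangle^{2s}\varphi\|_{H^{-s}}^2\,d\tau$, so coercivity is exactly your observability inequality at level $-s$, which your Ingham argument also provides; or (ii) keep your $\Lambda$, invert it on $L^2_0$, and upgrade to $H^s_0$ by writing $\Lambda=T\,\mathrm{diag}(\beta_k)+K$, where the off-diagonal part $K$ has matrix entries bounded by rapidly decaying factors in $|n-k|$ times $|n^{2j+1}-k^{2j+1}|^{-1}$, hence is smoothing, and then run a Fredholm/bootstrap argument. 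Either repair is real additional work --- precisely the work that the paper's decoupled moment-method formula avoids.
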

\begin{remark}\label{rem:proof}
The proof of Theorem \ref{thm:EC} is standard provided that the eigenvalues of the associated linear operator satisfy Lemma \ref{lem:gap}, precisely, the proof relies only on the fact that the dual basis of $\{e^{\lambda_k t}\}$ is a Riesz  sequence, which follows from a classical theorem of Ingham and Beurling \cite{Ing1936,Beu}.
\end{remark}

\begin{proof}[Proof of Theorem \ref{thm:EC}]
As mentioned in Remark \ref{rem:proof}, the proof of Theorem \ref{thm:EC} is now standard, and can be found in the literature, for instance \cite{Laurent,Russel96,zhang1}. However, we also give a proof for the sake of self-containedness.

Since the solution $u(t,x)$ can be expressed as in \eqref{k9}, it suffices to find $h \in L^2(0,T;H^s(\mathbb{T}))$ such that
\[u_1(x) = \sum_{k \in \Z}\left(  e^{\lambda_{k}%
T}u_{0,k}+\int_{0}^{T}e^{\lambda_{k}\left(  T-\tau\right)  }G_{k}\left[
h\right]  \left(  \tau\right)  d\tau\right)  \phi_{k}\left(  x\right),\]
which follows from
\begin{equation}\label{eq:EC-1}
e^{-\lambda_kT} u_{1,k} - u_{0,k} = \int_0^T e^{-\lambda_k \tau} G_{k}[ h ] (\tau) \; d\tau, \quad \mbox{for each} \;\; k \in \Z,
\end{equation}
where $u_{0,k}$, $u_{1,k}$ and $G_{k}[ h ]$ are the Fourier coefficients defined as in \eqref{eq:coefficients}.

Note that  $\mathcal P := \{p_k = e^{\lambda_k t} : k \in \Z\}$ forms a Riesz basis for its closed span $\mathcal P_T$ in $L^2(0,T)$, and there uniquely exists the dual basis $\mathcal Q = \{q_k : k \in \Z\}$ in $\mathcal P_T$ such that
\begin{equation}\label{eq:orthonormal}
\int_0^T q_j(t) \overline{p_k(t)} \; dt = \delta_{jk}, \quad -\infty < j, k < \infty.
\end{equation}
We take the control input $h$ of the form
\begin{equation}\label{eq:h}
h(t,x) = \sum_{j \in \Z} h_j q_j(t)(G \phi_j)(x),
\end{equation}
where the coefficients $h_j$ are to be precisely determined later, depending on given $v_0$, $v_1$ and $g$.

Inserting \eqref{eq:h} into \eqref{eq:EC-1} and using \eqref{eq:orthonormal} and the fact that $G$ is a self-adjoin operator, one has
\begin{equation}\label{eq:h1}
e^{-\lambda_kT} u_{1,k} - u_{0,k} =\sum_{j \in \Z} \int_0^T e^{-\lambda_k t}h_j q_j(t)  (G(G \phi_j), \phi_k) dt = h_k \|G\phi_k\|_{L^2}^2.
\end{equation}
We set $\beta_k := \|G\phi_k\|_{L^2}^2$. The definitions of $\phi_k$ and $G\phi_k$ ensure that $\beta_k > 0$ for all $k \neq 0$. Moreover, a direct computation in \eqref{k5a} gives
\[\begin{aligned}
\beta_k =&~{}\frac{1}{2\pi}\int_{\T}g(x)^2 \; dx -2 \mbox{Re} \left(\left(\int_{\T} g(x) \phi_k(x) \; dx \right)\int_{\T}g(x)^2 \phi_{-k}(x) \; dx\right)\\
&~{}+\left|\int_{\T} g(x) \phi_k(x) \; dx \right|^2 \int_{\T} g(x)^2 \; dx,
\end{aligned}\]
which, in addition to Riemann-Lebesgue lemma, ensures
\[\lim_{|k| \to \infty} \beta_k = \frac{1}{2\pi}\int_{\T}g(x)^2 \; dx > 0.\]
From above observation, it follows that there exists $\delta >0$ such that
\[\beta_k > \delta >0, \quad k \neq 0.\]
Thus, from \eqref{eq:h1}, $h_k$ is naturally defined as
\[h_0 = 0 \quad \mbox{and} \quad h_k = \frac{e^{-\lambda_kT} u_{1,k} - u_{0,k}}{\beta_k}, \quad k \neq 0.\]

The rest of proof is to show that $h$ is in $L^{2}\left(0,T;H^{s}(\mathbb{T})\right)$ for all $u_0, u_1 \in H^s$. We write $G\phi_j$ and $g$ with the standard basis $\{\phi_n\}$ as
\[G\phi_j(x) = \sum_{n \in \Z} \mathcal G_{j, n}\phi_n(x) \quad \mbox{and} \quad g(x) = \sum_{n \in \Z} g_n \phi_n(x),\]
where $\mathcal G_{j, n} = (G\phi_j , \phi_n)$ and $g_n = (g, \phi_n)$, for all $j, n \in \Z$. Then, $h$ in \eqref{eq:h} can be rewritten as
\[h(t,x) = \sum_{j \in \Z} \sum_{n \in \Z} h_j q_j(t)\mathcal G_{j, n}\phi_n(x).\]
By using a classical theorem of Ingham and Beruling \cite{Ing1936,Beu}, a computation with the property of the Riesz basis $\mathcal Q$ gives
\[\begin{aligned}
\|h\|_{L^2(0,T:H^s)}^2 
\lesssim &~{} \sum_{n \in \Z} \bra{n}^{2s}\sum_{j \in \Z \setminus \{0\}} |h_j \mathcal G_{j, n}|^2 \\
=&~{} \sum_{j \in \Z \setminus \{0\}}|h_j|^2 \sum_{n \in \Z} \bra{n}^{2s}|\mathcal G_{j, n}|^2.
\end{aligned}\]
Using definitions of $\mathcal G_{j,n}$ and $G \phi_j$, one knows
\[\begin{aligned}
\mathcal G_{j, n} 
= (G\phi_j , \phi_n) 
= \sum_{k \in Z}g_k(\phi_k\phi_j, \phi_n) - g_{-j}g_n
= \frac{1}{\sqrt{2\pi}}g_{n-j} - g_{-j}g_n,
\end{aligned}\]
hence we obtain
\[|\mathcal G_{j, n}|^2 \lesssim |g_{n-j}|^2 + |g_{-j}|^2 |g_n|^2,\]
and conclude
\[ \sum_{n \in \Z} \bra{n}^{2s}|\mathcal G_{j, n}|^2 \lesssim  \sum_{n \in \Z} \bra{n+j}^{2s}|g_n|^2 +  |g_{-j}|^2\sum_{n \in \Z} \bra{n}^{2s}|g_n|^2.\]
When $s \ge 0$, a direct computation yields
\[\begin{aligned}
\|h\|_{L^2(0,T:H^s)}^2 \lesssim&~{}\sum_{j \in \Z \setminus \{0\}}|h_j|^2\left( \sum_{n \in \Z} \bra{j+n}^{2s}|g_n|^2 +  |g_{-j}|^2\sum_{n \in \Z} \bra{n}^{2s}|g_n|^2\right)\\
\lesssim&~{} \norm{g}_{H^s}^2\sum_{j \in \Z \setminus \{0\}}(\bra{j}^{2s} + |g_{-j}|^2)|h_j|^2\\
\lesssim&~{} \norm{g}_{H^s}^2\sum_{j \in \Z \setminus \{0\}}(\bra{j}^{2s} + |g_{-j}|^2)\beta_j^{-2}|e^{-\lambda_jT} u_{1,j} - u_{0,j}|^2\\
\lesssim&~{}\max_{j \in \Z \setminus \{0\}}\beta_{j}^{-2} \norm{g}_{H^s}^2(1+\norm{g}_{L^2}^2)\left(\norm{u_0}_{H^s}^2 + \norm{u_1}_{H^s}^2\right).
\end{aligned}\]
On the other hand, when $s < 0$, thanks to the crude estimate
\[\bra{a+b}^c \lesssim \bra{a}^{|c|}\bra{b}^c, \quad a,b,c \in \R,\]
we have similarly as before
\[\begin{aligned}
\sum_{n \in \Z} \bra{j}^{-2s}\bra{n}^{2s}|\mathcal G_{j, n}|^2 \lesssim&~{}  \sum_{n \in \Z} \bra{n}^{-2s}|g_n|^2 +  \bra{-j}^{-2s}|g_{-j}|^2\sum_{n \in \Z} \bra{n}^{2s}|g_n|^2 \\
\lesssim&~{}(1+\norm{g}_{H^s}^2)\norm{g}_{H^{-s}}^2,
\end{aligned}\]
and hence
\[\begin{aligned}
\|h\|_{L^2(0,T:H^s)}^2 \lesssim&~{} \sum_{j \in \Z \setminus \{0\}}\bra{j}^{2s}|h_j|^2 \sum_{n \in \Z}\bra{j}^{-2s} \bra{n}^{2s}|\mathcal G_{j, n}|^2\\
\lesssim&~{}\max_{j \in \Z \setminus \{0\}}\beta_{j}^{-2}(1+\norm{g}_{H^s}^2)\norm{g}_{H^{-s}}^2\left(\norm{u_0}_{H^s}^2 + \norm{u_1}_{H^s}^2\right).
\end{aligned}\]
Therefore, we complete the proof.
\end{proof}

As a consequence of Theorem \ref{thm:EC}, we have the following property for the unitary operator group $W$.
\begin{corollary}
Let $T > 0$ be given. Then, there exists $\delta > 0$ such that
\[\int_0^T \norm{GW(t)f}_{L^2}^2 \; dt \ge \delta \norm{f}_{L^2}^2,\]
for any $f \in L^2$.
\end{corollary}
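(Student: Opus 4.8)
The plan is to deduce this observability estimate from the exact controllability of the linear open-loop system \eqref{k8} at the level $s=0$ (Theorem~\ref{thm:EC}) through the classical Hilbert Uniqueness Method duality, which identifies observability of the adjoint group with surjectivity of the control operator. Throughout I would work on the mean-zero space $L_0^2(\mathbb{T})$: since $G\phi_0 = g\,(\phi_0 - \int_\T g\,\phi_0\,dx) = 0$, the constant mode lies in the kernel of $G$, so the asserted inequality can only be meaningful on $L_0^2(\mathbb{T})$, where $G$ is bounded below in the dual sense.

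First I would introduce the control operator
\[
L_T : L^2(0,T;L_0^2) \to L_0^2, \qquad L_T h = \int_0^T W(T-s)\, G h(s)\, ds,
\]
so that the solution of \eqref{k8} with $u(0)=u_0$ satisfies $u(T)=W(T)u_0 + L_Th$. Because $W(T)$ acts by $e^{ik^{2j+1}T}$ on the $k$-th mode and hence preserves the mean, Theorem~\ref{thm:EC} is precisely the statement that $L_T$ maps onto $L_0^2$: given any admissible target one solves $L_Th = u_1 - W(T)u_0$. By the standard closed-range duality, surjectivity of $L_T$ onto the Hilbert space $L_0^2$ is equivalent to the coercivity estimate $\|L_T^*\phi\|_{L^2(0,T;L_0^2)} \ge c\,\|\phi\|_{L^2}$ for some $c>0$ and all $\phi \in L_0^2$.

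Next I would compute $L_T^*$. Using that $G$ is self-adjoint on $L_0^2$ together with $W^*(t)=W(-t)$ from \eqref{eq:AW}, one finds $(L_T^*\phi)(s) = G\,W(s-T)\phi$, so that after the substitution $r=T-s$ the coercivity estimate reads
\[
\int_0^T \|GW(-r)\phi\|_{L^2}^2\, dr \ \ge\ c\,\|\phi\|_{L^2}^2, \qquad \phi \in L_0^2 .
\]
To convert the backward group $W(-r)$ into the forward group $W(r)$ of the statement, I would invoke the group law $W(-r) = W(T-r)W(-T)$ together with unitarity: setting $\psi := W(-T)\phi$ (so that $\|\psi\|_{L^2}=\|\phi\|_{L^2}$ and $\psi$ ranges over all of $L_0^2$) gives $GW(-r)\phi = GW(T-r)\psi$, and the change of variables $m=T-r$ yields $\int_0^T\|GW(m)\psi\|_{L^2}^2\,dm \ge c\,\|\psi\|_{L^2}^2$, which is exactly the desired inequality with $\delta=c$.

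The substantive content is entirely contained in the surjectivity$\leftrightarrow$coercivity duality and in Theorem~\ref{thm:EC} itself (which rests on the gap condition of Lemma~\ref{lem:gap} and the Ingham--Beurling theorem); one could alternatively argue directly from the biorthogonal Riesz structure exploited in the proof of Theorem~\ref{thm:EC}, but the duality route is cleaner. I expect no genuinely hard analysis to remain once the duality is set up; the only points demanding care are the bookkeeping of the adjoint $L_T^*$ and the time-reversal step, and—conceptually—the observation that the estimate holds on $L_0^2$ rather than all of $L^2$ precisely because $G$ annihilates the constants.
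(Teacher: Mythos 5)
Your duality argument is correct and is essentially the paper's own (implicit) proof: the paper states this corollary without proof as an immediate consequence of Theorem \ref{thm:EC}, and the standard controllability--observability duality you set up (surjectivity of the control-to-state map $L_T$ onto the mean-zero space, equivalently coercivity of $L_T^*$, followed by time reversal and unitarity of $W$) is exactly how that consequence is drawn. Your restriction to $L_0^2(\T)$ is moreover a genuine and necessary correction of the printed statement: since $\int_{\T} g\,dx = 1$, the operator $G$ annihilates constants while $W(t)$ fixes them, so for constant $f$ the left-hand side vanishes and the inequality ``for any $f \in L^2$'' is false as written; it holds precisely on the mean-zero subspace, which is the version your argument establishes.
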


\subsection{Feedback stabilization}
This part of the work gives a positive answer to the stabilization problem. Let us remember that if $s \in \R$, for any $\lambda>0$, we define a bounded linear operator from $H^s(\mathbb{T})$ to itself by
\[ L_{\lambda} f =\int_0^1e^{-2\lambda \tau} W (-\tau )  G G^{\ast} W^{\ast}(-\tau) f \;  d\tau,
\]
for any $f \in H^{s}\left(  \mathbb{T}\right) $. Note that $L_{0} \equiv I$. It is known that $L_{\lambda}$ is a self-adjoint positive operator on $H_0^s(\T)$ and so is its inverse $L_{\lambda}^{-1}$, for all $s \ge 0$ (see, for instance, \cite[Lemma 2.4]{Laurent}).  This fact enables us to take the control function $h(t,x) = -G^*L_{\lambda}^{-1} u(t,x)$, and by employing the following feedback control law
\[K_{\lambda}u (t,x)  \equiv GG^{\ast}L_{\lambda}^{-1}u(t,x)\]
we obtain the closed-loop system from \eqref{k8}, namely
\begin{equation}\label{eq:closed loop}
\pt u + (-1)^{j+1} \px^{2j+1}u =-K_{\lambda}u.
\end{equation}

We will give a stabilizability result, to see this we rewrite system \eqref{eq:closed loop} as an abstract control system in the Hilbert space $\mathcal{V}$:
\begin{equation}\label{eq:abstract}
\partial_t u=Au+Bh, \quad u(0)=u_{0},
\end{equation}
where A is the operator defined by \eqref{eq:operator} which corresponds to the continuous unitary operator group $W(t)$ on the space $L^{2}%
(\mathbb{T})$ satisfying \eqref{eq:AW}.
The following theorem is derived from Theorem \ref{thm:EC} and a classical principle \textit{exact controllability implies exponential stabilizability} for conservative control systems. For details, we suggest for the reader the references \cite{Liu,Slemrod}.

\begin{theorem}\label{th:stabilizability}
Assume that the assumptions of Theorem \ref{thm:EC} are satisfied. Then
\begin{itemize}
\item[(i)] There exist a $T>0$ and $\delta>0$ such that $$\int_0^T\norm{B^{\ast}W^{\ast}(t)u_0}^2_{L^2(\mathbb{T})}dt\geq\delta\norm{u_0}^2_{H^s(\mathbb{T})},$$
for any $u_0\in H^s$.
\item[(ii)] For any given $\lambda>0$, there exists an operator $K\in \mathcal{L}(H^s(\T),L^2(\T))$ such that if one chooses $h=Ku$ in \eqref{eq:abstract}, then the resulting closed-loop system
\begin{equation*}
\partial_t u=Au+BKu, \quad u(0)=u_{0},
\end{equation*}
has the property that its solution satisfies
\begin{equation*}
\norm{u(t)}_{H^s} \lesssim e^{-\lambda t}\norm{u_0}_{H^s},
\end{equation*}
\end{itemize}
\end{theorem}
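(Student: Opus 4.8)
The plan is to verify the hypotheses of the abstract principle ``exact controllability $\Rightarrow$ exponential stabilizability'' for conservative systems (see \cite{Liu,Slemrod}) for the pair $(A,B)$, where $A$ is the skew-adjoint generator of the unitary group $W(t)$ (so $A^\ast=-A$ and $W^\ast(-t)=W(t)$ by \eqref{eq:AW}) and $B=G$ is the bounded, $L^2$-self-adjoint control operator. The only genuine input will be the exact controllability of Theorem \ref{thm:EC}; everything else is the standard conservative-system machinery, carried out on the mean-zero space $H^s_0(\T)$ on which the feedback drives solutions to their (zero) mean. All inner products and adjoints below are understood with respect to the $H^s$ structure of the state space and the associated control/observation space.

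First I would obtain the observability inequality (i) as the functional-analytic dual of Theorem \ref{thm:EC}. Introduce the reachability map
\[
\Phi_T h=\int_0^T W(T-\tau)Bh(\tau)\,d\tau,\qquad \Phi_T:L^2(0,T;H^s)\to H^s_0 .
\]
Theorem \ref{thm:EC} (applied with $u_0=0$) shows that $\Phi_T$ is onto $H^s_0$, so its range is closed; by the closed range theorem the adjoint $\Phi_T^\ast$ is bounded below, and a direct computation gives $\Phi_T^\ast u_0=B^\ast W^\ast(T-\cdot)u_0$. The substitution $t\mapsto T-t$ then yields $\int_0^T\norm{B^\ast W^\ast(t)u_0}^2\,dt\ge\delta\norm{u_0}_{H^s}^2$, which is exactly (i); equivalently one reads (i) off the Corollary following Theorem \ref{thm:EC} after tracking the $H^s$ weights.

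With (i) in hand I would prove (ii) through the Gramian $L_\lambda$ defined before the statement. Using $W^\ast(-\tau)=W(\tau)$, $G^\ast=G$ and (i) on $[0,1]$, coercivity follows from $\langle L_\lambda f,f\rangle=\int_0^1 e^{-2\lambda\tau}\norm{G^\ast W(\tau)f}^2\,d\tau\ge e^{-2\lambda}\delta\norm{f}^2$, so $L_\lambda$ is self-adjoint, positive and boundedly invertible. I then take $h=Ku=-G^\ast L_\lambda^{-1}u$, so that $BK=-GG^\ast L_\lambda^{-1}=-K_\lambda$ and the closed loop is $\partial_t u=A_{cl}u$ with $A_{cl}=A-GG^\ast L_\lambda^{-1}$, matching \eqref{eq:closed loop}. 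Differentiating the defining integral of $L_\lambda$ and using $A^\ast=-A$ produces the operator Lyapunov identity
\[
AL_\lambda+L_\lambda A^\ast=-2\lambda L_\lambda+GG^\ast-e^{-2\lambda}W^\ast(1)GG^\ast W(1).
\]
Feeding this into the Lyapunov functional $V(u)=\langle L_\lambda^{-1}u,u\rangle$ (which is equivalent to $\norm{u}^2$ by coercivity) gives, along closed-loop trajectories,
\[
\frac{d}{dt}V=-2\lambda V-\norm{G^\ast L_\lambda^{-1}u}^2-e^{-2\lambda}\norm{G^\ast W(1)L_\lambda^{-1}u}^2\le-2\lambda V,
\]
the boundary term from the finite-time Gramian being nonnegative and hence helpful. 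Thus $V(u(t))\le e^{-2\lambda t}V(u_0)$, which yields $\norm{u(t)}_{H^s}\lesssim e^{-\lambda t}\norm{u_0}_{H^s}$.

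The hard part will be the bookkeeping forced by working in $H^s$ rather than in $L^2$: since $G$ is self-adjoint on $L^2$ but \emph{not} on $H^s$, one must pin down which inner product defines $B^\ast$ and guarantees the coercivity of $L_\lambda$ on $H^s$, and check that $\langle D\rangle^s$ commutes with $W(t)$ and $A$ while only conjugating $G$ by a zeroth-order multiplier, so that the $L^2$ observability transfers to the $H^s$ estimate in (i). The second delicate point is making the Lyapunov computation rigorous, since $V$ must be differentiated along trajectories that a priori only live in $H^s_0$; this requires controlling the domain of $A_{cl}$ and justifying the integration by parts that yields the operator identity. Both issues are standard for conservative systems, so I would invoke \cite{Liu,Slemrod} for the abstract mechanism and supply only the identity above and the coercivity estimate as the computations specific to our operator $A$.
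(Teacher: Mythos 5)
Your proposal is correct and follows essentially the same route as the paper: the paper derives Theorem \ref{th:stabilizability} from the exact controllability of Theorem \ref{thm:EC} together with the classical principle ``exact controllability implies exponential stabilizability'' for conservative systems, citing \cite{Liu,Slemrod} for the details, and your write-up is exactly that argument unfolded — duality/closed-range to get the observability inequality (i), then the Gramian feedback $K=-G^{\ast}L_{\lambda}^{-1}$ (the paper's $K_{\lambda}=GG^{\ast}L_{\lambda}^{-1}$) with the Lyapunov identity to get the decay rate $\lambda$ in (ii). The only caveats (restriction to the mean-zero space $H^s_0(\T)$, since $G$ annihilates constants, and the $L^2$-versus-$H^s$ adjoint bookkeeping) are ones you already flag, and they are glossed over in the paper's own statement as well.
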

In our context, by using the result due \cite{Liu,Slemrod}, the following proposition presents that the closed-loop system \eqref{eq:closed loop} is exponentially stable:
\begin{proposition}
\label{zhang_2}Let $s\geq0$ and $\lambda>0$ be given. Then for any $u_{0}\in H^{s}\left(  \mathbb{T}\right)  $, the linear closed-loop system \eqref{eq:closed loop} admits a unique solution $u\in C\left(  \left[  0,T\right]  ;H^{s}\left(  \mathbb{T}\right) \right)  $. Moreover, the solution $u$ obeys the following decay property:
\[\norm{u(t)-[u_{0}]}_{H^s} \lesssim e^{-\lambda t}\norm{u_0-[u_{0}]}_{H^s},\]
for any $t>0$. The implicit constant depends only on $s$.
\end{proposition}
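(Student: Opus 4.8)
The plan is to obtain Proposition \ref{zhang_2} as an instance of the abstract stabilization result of Theorem \ref{th:stabilizability}, which packages the Liu--Slemrod principle \cite{Liu,Slemrod}, combined with a bounded-perturbation argument for well-posedness and the conservation of the mean to peel off the constant mode. I view \eqref{eq:closed loop} in the abstract form \eqref{eq:abstract}, i.e. $\partial_t u = Au - K_\lambda u$ with $A$ the skew-adjoint generator of \eqref{eq:operator} (recall $A^\ast=-A$ by \eqref{eq:AW}) and damping $K_\lambda = GG^\ast L_\lambda^{-1}$.

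First I would settle existence and uniqueness. Since $G$ is bounded on $H^s$ and $L_\lambda^{-1}$ is bounded, self-adjoint and positive on $H_0^s(\T)$ for every $s\ge 0$ (as recalled after the definition of $L_\lambda$), the operator $K_\lambda$ is bounded on $H^s$. Because $A$ generates the unitary group $W(t)$, the bounded-perturbation theorem for $C_0$-semigroups shows that $A-K_\lambda$ generates a $C_0$-semigroup on $H^s$, so for any $u_0\in H^s$ and any $T>0$ there is a unique mild solution $u\in C([0,T];H^s(\T))$ given by Duhamel's formula. Next I would reduce to mean-zero data: integrating the equation over $\T$ and using $\int_\T \px^{2j+1}u\,dx=0$ together with $\int_\T [Gh]\,dx=0$ for every $h$ (immediate from \eqref{k5a} and $\int_\T g\,dx=1$) gives $\frac{d}{dt}\int_\T u\,dx = 0$, hence $[u(t)]=[u_0]$ for all $t$. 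Since $G$ annihilates constants and $A$ kills the constant mode, the function $v:=u-[u_0]$ solves the same closed-loop system with $v_0=u_0-[u_0]\in H_0^s(\T)$ and $\norm{u(t)-[u_0]}_{H^s}=\norm{v(t)}_{H^s}$, so it suffices to prove exponential decay of $v$ on the invariant subspace $H_0^s(\T)$.

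On $H_0^s(\T)$ the hypotheses of Theorem \ref{th:stabilizability} hold: $A$ is skew-adjoint, $B=G$ is bounded, and the observability estimate of part (i) — equivalent to the exact controllability furnished by Theorem \ref{thm:EC} — is available with some $T,\delta>0$. The feedback $h=-G^\ast L_\lambda^{-1}v$ in \eqref{eq:closed loop} is exactly the one produced by the Liu--Slemrod construction, so part (ii) yields $\norm{v(t)}_{H^s}\lesssim e^{-\lambda t}\norm{v_0}_{H^s}$, which together with the previous step gives the asserted decay with constant depending only on $s$. For completeness I would exhibit the mechanism through the Lyapunov functional $V(t):=(L_\lambda^{-1}v(t),v(t))$. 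Differentiating along the flow, inserting the operator identity $AL_\lambda+L_\lambda A^\ast+2\lambda L_\lambda = GG^\ast - e^{-2\lambda}W(-1)GG^\ast W^\ast(-1)$ satisfied by the Gramian $L_\lambda$, and using $A^\ast=-A$, one arrives at $\dot V \le -2\lambda V$; since $L_\lambda$ is bounded and bounded below, $V$ is equivalent to $\norm{\cdot}_{H^s}^2$ and Gr\"onwall's inequality delivers the rate $\lambda$.

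The main obstacle is the positivity and lower bound of the Gramian $L_\lambda$ on $H_0^s(\T)$: this is precisely what converts the observability inequality into the norm equivalence $V\sim\norm{\cdot}_{H^s}^2$ that drives the Lyapunov estimate, and it is where the exact controllability of Theorem \ref{thm:EC} is genuinely used. A secondary technical point is that $A$ and $G$ must be handled consistently within the $H^s$ inner product rather than the $L^2$ one, so that $A$ remains skew-adjoint and the Lyapunov identity for $L_\lambda$ stays valid at the $H^s$ level; once these are in place the decay follows uniformly in $s$.
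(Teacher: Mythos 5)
Your proposal is correct and follows essentially the same route as the paper: the paper's entire proof of Proposition \ref{zhang_2} is to invoke Theorem \ref{th:stabilizability}, i.e.\ the Liu--Slemrod principle that exact controllability of the conservative system (Theorem \ref{thm:EC}) yields exponential stabilizability under the Gramian feedback $K_\lambda = GG^\ast L_\lambda^{-1}$, which is exactly your argument. The extra details you supply---well-posedness by bounded perturbation of the unitary group, the reduction to mean-zero data, and the Lyapunov functional $(L_\lambda^{-1}v,v)$ with the identity $AL_\lambda + L_\lambda A^\ast + 2\lambda L_\lambda = GG^\ast - e^{-2\lambda}W(-1)GG^\ast W^\ast(-1)$---are precisely the content of the cited Liu--Slemrod machinery that the paper leaves to the references.
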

\begin{proof}
The proof is the same as that of Theorem \ref{th:stabilizability}.
\end{proof}

\section{Well-posedness results}\label{sec4}

\subsection{Global well-posedness for the closed loop system}
Recall the system \eqref{eq:closed loop} with the nonlinearity $uu_x$
\begin{equation}\label{eq:nonlinear closed loop}
\pt u + (-1)^{j+1} \px^{2j+1}u +uu_x=-K_{\lambda}u.
\end{equation}
and its integral formulas
\begin{equation}\label{eq:duhamel}
\begin{aligned}
u(t)=&~{}W(t) u_{0}-\int_{0}^{t} W(t-\tau)(K_{\lambda} u)(\tau) d \tau-\int_{0}^{t} W(t-\tau)\left(u u_{x}\right)(\tau) d \tau\\
=:&~{}W_{\lambda}(t) u_{0} - \int_{0}^{t} W_{\lambda}(t-\tau)\left(u u_{x}\right)(\tau) d \tau,
\end{aligned}
\end{equation}
where $W_{\lambda}(T)$ is the linear propagator associated to \eqref{eq:closed loop}. Using Lemma \ref{lem:Xsb} (5) (with small modification) and the boundedness of $G, G^*$ and $L_{\lambda}^{-1}$, we immediately obtain
\begin{equation}\label{eq:K_lambda}
\left\| \int_a^t W(t-\tau)(K_{\lambda} u)(\tau)\;d\tau \right\|_{Y_I^s} \lesssim |I|^{1-\epsilon} \norm{u}_{Y_I^s},
\end{equation}
for $I = [a,b]$ with $0 < b-a < 1$ and $0 < \epsilon < \frac12$.

We now establish a similar result to the Lemma \ref{lem:Xsb} (2) and (3), associated to the propagator $W_{\lambda}$ but in $Y^s$.
\begin{lemma}\label{lem:modified Xsb}
Let $T>0$ be given.
\begin{enumerate}
\item For all $s \in \R$, we have for $f \in H^s$
\[\norm{W_{\lambda}(t)f}_{Y_T^s} \lesssim \norm{f}_{H^s}.\]

\item For all $s \in \R$, we have for $F \in Y_T^s$
\[\left\|\int_0^t W_{\lambda}(t-\tau)F(\tau)\; d\tau \right\|_{Y_T^s} \lesssim \norm{\mathcal F^{-1}\left(\bra{\tau - k^{2j+1}}^{-1}\widetilde F\right)}_{Y_T^s}.\]
\end{enumerate}
The implicit constants depends on $T$ and $s$.
\end{lemma}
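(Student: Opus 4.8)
The plan is to regard $W_\lambda$ as a bounded perturbation of the free group $W$ and to transfer the free estimates of Lemma~\ref{lem:Xsb}(2)--(3) by Duhamel's formula combined with the damping bound \eqref{eq:K_lambda}. Differentiating $W(-t)W_\lambda(t)f$ and using $\big(\partial_t+(-1)^{j+1}\partial_x^{2j+1}\big)W_\lambda=-K_\lambda W_\lambda$ gives
\[
W_\lambda(t)f=W(t)f-\int_0^t W(t-\tau)K_\lambda W_\lambda(\tau)f\,d\tau,
\]
while, for item (2), setting $v(t)=\int_0^t W_\lambda(t-\tau)F(\tau)\,d\tau$ and repeating the computation yields
\[
v(t)=\int_0^t W(t-\tau)F(\tau)\,d\tau-\int_0^t W(t-\tau)K_\lambda v(\tau)\,d\tau.
\]
In each identity the first term is controlled by Lemma~\ref{lem:Xsb}(2), resp.\ (3), giving exactly $\norm{f}_{H^s}$, resp.\ $\norm{\mathcal F^{-1}(\bra{\tau-k^{2j+1}}^{-1}\widetilde F)}_{Y_T^s}$, and the second term is a damping contribution to be absorbed.

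First I would prove both estimates on a short interval $[0,\delta]$ with $\delta<1$. There \eqref{eq:K_lambda} yields $\norm{\int_0^t W(t-\tau)K_\lambda(\cdot)(\tau)\,d\tau}_{Y_\delta^s}\lesssim \delta^{1-\epsilon}\norm{\cdot}_{Y_\delta^s}$; applying it with $W_\lambda(\cdot)f$, resp.\ $v$, in the damping slot and choosing $\delta$ so small that the resulting constant times $\delta^{1-\epsilon}$ is at most $\tfrac12$, the damping term is absorbed into the left-hand side. This produces $\norm{W_\lambda(\cdot)f}_{Y_\delta^s}\lesssim\norm{f}_{H^s}$ and $\norm{v}_{Y_\delta^s}\lesssim\norm{\mathcal F^{-1}(\bra{\tau-k^{2j+1}}^{-1}\widetilde F)}_{Y_\delta^s}$.

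To reach $[0,T]$ I would iterate over the intervals $I_k=[k\delta,(k+1)\delta]$, $0\le k<N:=\lceil T/\delta\rceil$, using the semigroup property to restart. For item (1), on $I_k$ one has $W_\lambda(t)f=W_\lambda(t-k\delta)W_\lambda(k\delta)f$, so the short-time bound applied to the data $W_\lambda(k\delta)f$ gives $\norm{W_\lambda(\cdot)f}_{Y_{I_k}^s}\lesssim\norm{W_\lambda(k\delta)f}_{H^s}\lesssim_T\norm{f}_{H^s}$, where the last step uses that $A-K_\lambda$ generates a $C_0$-semigroup on $H^s$ with $\norm{W_\lambda(t)}_{H^s\to H^s}\le e^{CT}$ on $[0,T]$ (equivalently, the decay of Proposition~\ref{zhang_2} together with conservation of the mean). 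Summing the $N$ contributions via a bounded-overlap partition of unity $\{\eta_k\}$ in time, and using that $\norm{\eta_k u}_{Y^s}\lesssim\norm{u}_{Y_{I_k}^s}$ --- stable for the $X^{s,1/2}$-part by \eqref{eq:Xsb stable} and for the $\ell_k^2 L^1_\tau$-part by Young's inequality in $\tau$ --- yields $\norm{W_\lambda(\cdot)f}_{Y_T^s}\lesssim_T\norm{f}_{H^s}$. For item (2) the analogous restart reads $v(t)=W_\lambda(t-k\delta)v(k\delta)+\int_{k\delta}^t W_\lambda(t-\tau)F(\tau)\,d\tau$ on $I_k$; estimating the first term by item (1) (hence by $\norm{v(k\delta)}_{H^s}\lesssim\norm{v}_{Y_{[0,k\delta]}^s}$) and the second by the short-time version of item (2), a finite induction on $k$ closes the bound, the key point being that the target norm $\norm{\mathcal F^{-1}(\bra{\tau-k^{2j+1}}^{-1}\widetilde F)}_{Y^s}$ is subadditive over the finitely many subintervals.

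The main obstacle is the gluing step, not the short-time estimate. Bourgain-type norms do not localize perfectly in time, so one cannot simply add the $Y_{I_k}^s$ norms; the argument is only legitimate because $T$ is fixed and the constants may depend on $T$, so the finitely many pieces can be summed after paying the stability of $Y^s$ under smooth time cutoffs. For item (2) the additional difficulty is to keep the right-hand side in its localized form $\norm{\mathcal F^{-1}(\bra{\tau-k^{2j+1}}^{-1}\widetilde F)}_{Y^s}$ through the recursion rather than crudely dominating it by $\norm{F}_{L^2_TH^s}$ as in Remark~\ref{rem:nonlinear embedding}, which would only prove a weaker inequality. Finally, it is essential that the $\ell_k^2 L^1_\tau$ component of the $Y^s$ norm be propagated correctly, since it is precisely this component that guarantees the embedding $Y^s\hookrightarrow C_TH^s$ used to evaluate the restart data $W_\lambda(k\delta)f$ (resp.\ $v(k\delta)$) in $H^s$; this is where working in $Y^s$ rather than $X^{s,1/2}$ alone is indispensable.
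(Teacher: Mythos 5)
Your proposal is correct and takes essentially the same approach as the paper: rewrite the damped evolution by Duhamel's formula with respect to the free group $W$, absorb the $K_\lambda$ contribution via \eqref{eq:K_lambda} on a time interval of length less than one, and then glue finitely many such subintervals of $[0,T]$, letting the constants depend on $T$. The only (cosmetic) difference is that the paper handles both items simultaneously by setting $u(t)=W_\lambda(t)f+\int_0^t W_\lambda(t-\tau)F(\tau)\,d\tau$ and relegates the gluing to a footnote, whereas you treat the two items separately and spell out the restart data $W_\lambda(k\delta)f$, resp.\ $v(k\delta)$, via the embedding $Y_T^s\hookrightarrow C_TH^s$ --- a step the paper's write-up leaves implicit.
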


\begin{proof}
For given $f \in H^s$ and $F \in Y_T^s$, set
\[u(t,x) = W_{\lambda}(t) f + \int_0^t W_{\lambda}(t-\tau) F(\tau)\; d\tau.\]
Then, $u$ solves
\[\pt u + (-1)^{j+1} \px^{2j+1}u =-K_{\lambda}u + F\]
with $u(0) = f$, equivalently,
\[u(t,x) = W(t)f - \int_0^t W(t-\tau) (K_{\lambda}u)(\tau) \; d\tau + \int_0^t W(t-\tau) F(\tau)\; d\tau.\]
Using \eqref{eq:K_lambda}, one has
\[\norm{u}_{Y_{I}^s} \lesssim \left(\norm{f}_{H^s} + \norm{\mathcal F^{-1}\left(\bra{\tau - k^{2j+1}}^{-1}\widetilde F\right)}_{Y_T^s}\right) + |I|^{1-\epsilon}\norm{u}_{Y_{I}^s},\]
which implies
\[\norm{u}_{Y_{I}^s} \le C(I)\left(\norm{f}_{H^s} + \norm{\mathcal F^{-1}\left(\bra{\tau - k^{2j+1}}^{-1}\widetilde F\right)}_{Y_T^s}\right),\]
for a proper non-empty $I \subset [0,T]$ with $|I| < 1$. Let $I = [0, t_0]$, then we divide $[0,T]$ into $\left[\frac{T}{t_0}\right]+1$ the subintervals, denoted by $I_j$, precisely, let $t_* := \left[\frac{T}{t_0}\right]$, set
\[I_0 = [0,t_0],  \quad I_j = [jt_0, (j+1)t_0], \quad j=1,2,\cdots, t_* -1, \quad \mbox{and} \quad I_{t_*} = [t_* t_0, T].\]
Therefore, we have\footnote{One may use time cut-off functions supported on each $I_j$ so that $u = \sum_{j=0}^{t_*} \eta_{I_j}(t)u$ on $[0,T]$. Then $Y_T^s$ norm of each part is bounded by $Y_{I_j}^s$ norm of the same one.}
\[\norm{u}_{Y_T^s} \lesssim (t_*+1)C(t_0)\left(\norm{f}_{H^s} + \norm{\mathcal F^{-1}\left(\bra{\tau - k^{2j+1}}^{-1}\widetilde F\right)}_{Y_T^s}\right),\]
which completes the proof.
\end{proof}

Using Lemmas \ref{lem:modified Xsb} and \ref{lem:Xsb} Item (4), one proves the local well-posedness of \eqref{eq:nonlinear closed loop}.
\begin{lemma}[Local well-posedness of nonlinear closed loop system]\label{lem:LWP of cl}
Let $s \ge 0$ and $T > 0$ be given. Let define a map $\Gamma : H^s \to C_TH^s$ as in the second part of the right-hand side of \eqref{eq:duhamel} (again denoted by $\Gamma u$). Then, there exists $\delta = \delta(T) > 0$ such that if
\[\norm{u_0}_{H^s} \le \delta,\]
then the map $\Gamma$ is a contraction map on a suitable ball. Moreover the map is locally uniformly continuous.
\end{lemma}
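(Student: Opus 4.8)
The plan is to run a standard Picard iteration for the integral equation
\[
\Gamma u(t) = W_{\lambda}(t)u_0 - \int_0^t W_{\lambda}(t-\tau)(uu_x)(\tau)\,d\tau
\]
in the resolution space $Y_T^s$, locating the fixed point in a small ball whose radius is comparable to $\norm{u_0}_{H^s}$. Since $Y_T^s \hookrightarrow C_TH^s$ by Lemma \ref{lem:Xsb} (1), solving in $Y_T^s$ automatically yields the claimed $C_TH^s$ regularity. First I would fix $s\ge 0$, so that in particular $s \ge -\frac{j}{2}$, which is the range where the bilinear estimate Lemma \ref{lem:Xsb} (4) is available, and consider, for a radius $R>0$ to be chosen, the ball $B_R := \{u \in Y_T^s : \norm{u}_{Y_T^s}\le R\}$ equipped with the $Y_T^s$ metric, which is complete.

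The two bounds that drive the argument come directly from the already-established estimates. For the linear part, Lemma \ref{lem:modified Xsb} (1) gives $\norm{W_{\lambda}(t)u_0}_{Y_T^s}\le C_T\norm{u_0}_{H^s}$. For the Duhamel part I would write the nonlinearity as $uu_x = \frac12\partial_x(u^2)$ and chain Lemma \ref{lem:modified Xsb} (2) with the bilinear estimate Lemma \ref{lem:Xsb} (4):
\[
\left\|\int_0^t W_{\lambda}(t-\tau)(uu_x)(\tau)\,d\tau\right\|_{Y_T^s} \lesssim \norm{\mathcal F^{-1}\big(\bra{\tau-k^{2j+1}}^{-1}\tfrac12\widetilde{\partial_x(u^2)}\big)}_{Y_T^s} \lesssim \norm{u}_{Y_T^s}^2 .
\]
Combining the two yields $\norm{\Gamma u}_{Y_T^s} \le C_T\norm{u_0}_{H^s} + C_T\norm{u}_{Y_T^s}^2$, and the same computation applied to $u^2-v^2 = (u+v)(u-v)$ produces the difference estimate $\norm{\Gamma u - \Gamma v}_{Y_T^s} \le C_T\norm{u-v}_{Y_T^s}\big(\norm{u}_{Y_T^s}+\norm{v}_{Y_T^s}\big)$.

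With these two inequalities in hand the fixed point is routine: set $R := \frac{1}{4C_T}$ and impose $\delta := \frac{1}{8C_T^2}$ (any comparable choice works). Then on $B_R$ one has $\norm{\Gamma u}_{Y_T^s} \le C_T\delta + C_TR^2 \le \frac{R}{2} + \frac{R}{4} \le R$, so $\Gamma$ maps $B_R$ into itself, while the difference estimate gives Lipschitz constant $2C_TR = \frac12 < 1$, so $\Gamma$ is a contraction on $B_R$. The Banach fixed point theorem then furnishes the unique solution $u \in B_R$. For the local uniform continuity of the data-to-solution map, I would compare the solutions $u,v$ attached to data $u_0,v_0$; since $u_0$ enters $\Gamma$ linearly through $W_{\lambda}(t)u_0$, one gets $\norm{u-v}_{Y_T^s} \le C_T\norm{u_0-v_0}_{H^s} + 2C_TR\,\norm{u-v}_{Y_T^s}$, whence $\norm{u-v}_{Y_T^s}\le 2C_T\norm{u_0-v_0}_{H^s}$, which is the desired Lipschitz (hence locally uniform) dependence.

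The analytic heart of the argument is entirely contained in the bilinear estimate Lemma \ref{lem:Xsb} (4) and in the mapping properties of $W_{\lambda}$ in $Y_T^s$ from Lemma \ref{lem:modified Xsb}; once these are granted the scheme is mechanical. The one point that deserves care, and the reason the threshold $\delta$ must be allowed to depend on $T$, is that the constant $C_T$ coming from Lemma \ref{lem:modified Xsb} carries no smallness as $T$ grows, since $W_{\lambda}$ folds the damping $K_{\lambda}$ into the free propagator. Hence, unlike a pure short-time contraction where one would exploit the gain $T^{b-b'}$ of Lemma \ref{lem:Xsb} (5), here the contraction must be extracted solely from the smallness of the data, which is exactly what ties the radius $R$ and the threshold $\delta$ to $C_T$.
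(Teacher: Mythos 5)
Your proposal is correct and follows essentially the same route as the paper's proof: both rest on the linear and Duhamel bounds of Lemma \ref{lem:modified Xsb} combined with the bilinear estimate of Lemma \ref{lem:Xsb} (4), and both close the contraction on a ball of radius comparable to $C\delta$ with $\delta \sim C^{-2}$, the smallness coming entirely from the data since the constant carries the $T$-dependence. Your write-up is in fact slightly more complete, as you spell out the Lipschitz dependence on the initial data, which the paper only asserts "one similarly proves."
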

\begin{proof}
The proof is analogous to the proof of Lemma \ref{lem:CMP}. Taking $Y_T^s$ norm to the map $\Gamma u$ and applying Lemmas \ref{lem:modified Xsb} and \ref{lem:Xsb} Item (4), one has
\[\norm{\Gamma u}_{Y_T^s} \le C\norm{u_0}_{H^s} + C\norm{u}_{Y_T^s}^2\]
and for $u-\underline{u}$ with $u(0) =\underline{u}(0)$,
\[\norm{\Gamma u - \Gamma \underline{u}}_{Y_T^s} \le C\left(\norm{u}_{Y_T^s} + \norm{\underline{u}}_{Y_T^s}\right)\norm{u-\underline{u}}_{Y_T^s},\]
where the constant $C$ be the maximum one among constants appearing in Lemma \ref{lem:Xsb} (4) and Lemma \ref{lem:modified Xsb}. By taking $\delta >0$ satisfying $8C^2 \delta < 1$, we claim the map $\Gamma$ is contractive on a ball
\[\{v \in Y_T^s : \norm{v}_{Y_T^s} \le 2C\delta\}.\]
One similarly proves that the map is Lipschitz continuous, thus we complete the proof.
\end{proof}

The local solution constructed in Lemma \ref{lem:LWP of cl} can be extended to the global one.
\begin{theorem}[Global well-posedness]\label{thm:GWP} Let $s \ge0$ and $T > 0$ be given. For any $u_0 \in H^s$, there exists a unique solution $u$ to \eqref{eq:nonlinear closed loop} in  $Y_T^s \cap C_TH^{s}$ such that the following estimate holds true:
\[\norm{u}_{Y_T^s} \le \alpha_{T,s}(\norm{u_0}_{L^2}) \norm{u_0}_{H^s},\]
where $\alpha_{T,s}$ is positive nondecreasing continuous function depending on $T$ and $s$.
\end{theorem}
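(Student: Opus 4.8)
The plan is to globalize the local solution of Lemma \ref{lem:LWP of cl} by iterating on short time intervals whose common length is dictated only by the $L^2$ norm of the data. The feature that forces the final bound to be \emph{linear} in $\norm{u_0}_{H^s}$ with a constant governed by $\norm{u_0}_{L^2}$ is a \emph{high-low} refinement of the bilinear estimate of Lemma \ref{lem:Xsb} (4), namely
\[
\norm{\mathcal F^{-1}\left(\bra{\tau-k^{2j+1}}^{-1}\widetilde{\px(uv)}\right)}_{Y_T^s} \lesssim \norm{u}_{Y_T^s}\norm{v}_{Y_T^0} + \norm{u}_{Y_T^0}\norm{v}_{Y_T^s},
\]
in which the factor carrying the $H^s$ weight is always paired with a factor measured only at the $L^2$ level. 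This estimate follows from the same frequency analysis behind Lemma \ref{lem:Xsb} (4) (cf. \cite[Lemma 2.4]{HongKwak}) together with the Strichartz bound of Lemma \ref{strichartz_estimates}, and it is the key analytic input.

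First I would record the a priori $L^2$ bound. Pairing \eqref{eq:nonlinear closed loop} with $u$ in $L^2$ and integrating by parts, the dispersive term vanishes because $\px^{2j+1}$ is skew-adjoint (see \eqref{eq:AW}) and the nonlinear term vanishes since $\int_\T u^2\px u\,dx=\frac13\int_\T \px(u^3)\,dx=0$, leaving
\[
\frac12\frac{d}{dt}\norm{u}_{L^2}^2 = -(K_\lambda u,u).
\]
Since $K_\lambda$ is bounded (and positive when $\lambda=0$, because $(K_0u,u)=\norm{G^\ast u}_{L^2}^2\ge0$), Gronwall's inequality yields $\sup_{t\in[0,T]}\norm{u(t)}_{L^2}\le C_T\norm{u_0}_{L^2}$. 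This identity is first justified for regularized data and then passed to the limit.

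Next, on a generic subinterval $I=[a,a+T_0]\subset[0,T]$ I would run the contraction of Lemma \ref{lem:LWP of cl}, but tracking the two regularities separately. Combining Lemma \ref{lem:modified Xsb} for the $W_\lambda$-Duhamel term, estimate \eqref{eq:K_lambda} for the damping, the high-low estimate above, and the time-localization gain $\norm{u}_{X_I^{s,b'}}\lesssim T_0^{\,b-b'}\norm{u}_{X_I^{s,b}}$ of Lemma \ref{lem:Xsb} (5) (taking $b$ slightly below $\tfrac12$ in the bilinear estimate), one reaches a bound of the schematic form
\[
\norm{u}_{Y_I^s} \le C\norm{u(a)}_{H^s} + CT_0^{\theta}\big(\norm{u}_{Y_I^0}+1\big)\norm{u}_{Y_I^s}
\]
for some $\theta>0$. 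Using the uniform $L^2$ control to bound $\norm{u}_{Y_I^0}\lesssim C_T\norm{u_0}_{L^2}$, I choose $T_0=T_0(T,\norm{u_0}_{L^2})$ small enough that $CT_0^{\theta}(C_T\norm{u_0}_{L^2}+1)\le\frac12$, so the last term is absorbed and $\norm{u}_{Y_I^s}\le 2C\norm{u(a)}_{H^s}$. The point is that this same $T_0$ works on \emph{every} subinterval, precisely because the $L^2$ norm stays below $C_T\norm{u_0}_{L^2}$ throughout $[0,T]$.

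Finally I would partition $[0,T]$ into $N\simeq T/T_0$ intervals and iterate the one-step bound, using $Y^s_{I_{n-1}}\hookrightarrow C H^s$ (Lemma \ref{lem:Xsb} (1)) to transfer the endpoint $H^s$ norm from one interval to the next; this gives $\norm{u(t_n)}_{H^s}\le(2C)^n\norm{u_0}_{H^s}$ and hence $\norm{u}_{Y_T^s}\lesssim(2C)^{N}\norm{u_0}_{H^s}$. Since $N$ depends only on $T$ and $\norm{u_0}_{L^2}$, choosing a continuous nondecreasing majorant of $(2C)^{N(T,\cdot)}$ produces the profile $\alpha_{T,s}$ claimed, while uniqueness and persistence in $C_TH^s$ descend from the local statement. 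The main obstacle is the high-low estimate itself: one must verify that in the non-resonant interactions governing $\px(uv)$ the derivative loss can always be charged to the high-frequency factor, so the remaining factor needs only its $L^2$ norm. This is exactly where the dispersive gain quantified by Lemma \ref{strichartz_estimates} enters, and where the generality of the order $2j+1$ must be handled uniformly in $j$.
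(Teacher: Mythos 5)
Your overall architecture ($L^2$ a priori bound, local steps of length $T_0=T_0(\norm{u_0}_{L^2})$, tame high-low bilinear estimate, iteration) is the standard persistence-of-regularity scheme, and the $L^2$ energy identity and the iteration bookkeeping are fine. But the one-step absorption contains a genuine gap: the factor $T_0^{\theta}$ that you extract by ``taking $b$ slightly below $\tfrac12$ in the bilinear estimate'' does not exist when $j=1$, which is included in the theorem's scope. For the periodic KdV equation the bilinear estimate of Lemma \ref{lem:Xsb} (4) holds \emph{only} at $b=\tfrac12$; this is the content of \cite[Theorem 1.2]{KPV1996}, which the paper itself invokes when explaining why it fixes $b=\tfrac12$ throughout. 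Concretely, in the high-low non-resonant interaction the resonance is of size $\sim k^{2j}k_1$, and at exponent $b$ the modulation weight recovers $k^{2jb}$ of the output derivative $k$; for $j=1$ and $b<\tfrac12$ one is left with a factor $k^{1-2b}$ that grows in $k$, so the estimate (even the mixed-regularity version with only the low factor at exponent $b<\tfrac12$) fails. Consequently your inequality $\norm{u}_{Y_I^s}\le C\norm{u(a)}_{H^s}+CT_0^{\theta}(\norm{u}_{Y_I^0}+1)\norm{u}_{Y_I^s}$ is unavailable for KdV, and without the $T_0^{\theta}$ factor the absorption would require smallness of $\norm{u}_{Y_I^0}$ itself, which large data do not provide. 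For $j\ge 2$ your scheme is plausible (the paper notes there is room in $b$ precisely when $\frac{1}{2j}\le b\le 1-\frac{1}{2j}$), and for $j=1$ the usual rescue is to rescale the torus to make the data small --- a trick you do not invoke and which requires care here because the damping operator $K_\lambda$ is not scale-invariant.

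It is also worth noting that the paper's proof avoids this issue entirely by abandoning Bourgain-space bilinear estimates above the $L^2$ level: it differentiates the equation in time, setting $v=u_t$, establishes a Gr\"onwall bound for the Hamiltonian-type energy $\frac12\int_{\T}(\px^j u)^2-\frac16\int_{\T}u^3$ via Gagliardo--Nirenberg, controls $\norm{v(t)}_{L^2}$ by a further energy identity, recovers $\norm{\px^{2j+1}u}_{L^2}$ algebraically from the equation $v=-(-1)^{j+1}\px^{2j+1}u-uu_x-K_\lambda u$, and then interpolates to reach all intermediate $s$. That classical energy route works uniformly in $j$ (including KdV) precisely because it never needs a small factor from the bilinear estimate; if you want to keep your dispersive approach, you must either restrict to $j\ge2$ or supply the missing large-data mechanism for $j=1$.
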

\begin{proof}
A direct computation, in addition to the fact that $G$ is self-adjoint in $L^2$, yields
\begin{equation}\label{eq:L^2 energy}
\frac12 \frac{d}{dt}\int_\T u^2  =(-1)^j\int_\T u\partial_x^{2j+1} u  - \int_\T u(uu_x) - \int_\T u K_{\lambda} u 
= -(GL_{\lambda}^{-1}u,Gu).
\end{equation}
Since $G$ and $L_{\lambda}^{-1}$ is bounded in $L^2$, the Cauchy-Schwarz and the Gr\"onwall's inequalities ensure
\begin{equation}\label{eq:L^2 global}
\norm{u(t)}_{L^2}^2 \lesssim \norm{u_0}_{L^2}^2e^{ct},
\end{equation}
for some $c>0$ depending on $G$ and $L_{\lambda}$, and $t >0$. Together with Lemma \ref{lem:LWP of cl} and the standard continuity argument, we complete the global well-posedness of \eqref{eq:nonlinear closed loop} in $L^2$.

Let $v = u_t$ for a smooth solution to \eqref{eq:nonlinear closed loop}. Then, $v$ solves
\begin{equation}\label{eq:2j+1}
\partial_t v + (-1)^{j+1} \partial_x^{2j+1} v + (uv)_x = -K_{\lambda}v
\end{equation}
where
\begin{equation}\label{eq:2j+1v_0}
v_0 = (-1)^j \partial_x^{2j+1} u_0 - u_0 \partial_x u_0 - K_{\lambda} u_0. 
\end{equation}
Note that $u$ is a solution to \eqref{eq:nonlinear closed loop}, thus we can take $T_0 > 0$ such that
\[\|u\|_{Y^0_{T_0}} \lesssim \|u_0\|_{L^2}.\]
Then, analogously as in the proof of Theorem \ref{lem:LWP of cl}, we also have
\[\|v\|_{L^{\infty}(0,T_1; L^2)} \lesssim \|v\|_{Y^0_{T_1}} \lesssim 2\|v_0\|_{L^2},\]
here $0 < T_1 < T_0$ is chosen appropriately. 

\medskip

On the other hand, a direct computation gives
\[\frac{d}{dt} \left(\frac12 \int_\T (\px^j u)^2 -  \frac16 \int_\T u^3\right) = -(GL_{\lambda}^{-1}\px^j u, G \px^j u) + \frac12 \int_\T u^2 K_{\lambda} u.\]
Using boundedness of $G$ and $L_{\lambda}^{-1}$, and thus $K_{\lambda}$, and Gagliardo–Nirenberg inequality, we have
\[\begin{aligned}
\|\px^j u(t)\|_{L^2}^2 \le&~{} \frac13 \int_T u^3 + \|\px^j u_0\|_{L^2}^2 + \frac13 \int_\T u_0^3 \\
&~{}+ C\int_0^t \|\px^j u(s)\|_{L^2}^2 \; ds + \int_0^t \frac14\|\px^j u(s)\|_{L^2}^2 + \frac14\| u(s)\|_{L^2}^6 + \frac12\|u(s)\|_{L^2}^2 \; ds\\
\le&~{} \frac{1}{12}\|\px^j u(t)\|_{L^2}^2 + \frac{1}{4}\|u(t)\|_{L^2}^{\frac{10}{3}} + \frac{13}{12}\|\px^j u_0\|_{L^2}^2 + \frac{1}{4}\|u_0\|_{L^2}^{\frac{10}{3}} \\
&~{}+ C\int_0^t \|\px^j u(s)\|_{L^2}^2 \; ds + \int_0^t \frac14\|\px^j u(s)\|_{L^2}^2 + \frac14\| u(s)\|_{L^2}^6 + \frac12\|u(s)\|_{L^2}^2 \; ds.
\end{aligned}\]
With \eqref{eq:L^2 global}, we claims from Gr\"onwall's inequality that $\|\px^j u(t)\|_{L^2}$ does not blow up in finite time, thus so $\|\px u(t)\|_{L^{\infty}}$.

\medskip

A similar computation as in \eqref{eq:L^2 energy} yields
\[\frac12\frac{d}{dt}\int_\T v^2  = -\frac12 \int_\T u_xv^2  -(GL_{\lambda}^{-1}v,Gv),\]
which ensures that
\[\|v(t)\|_{L^2} \le e^{Ct +\int_0^t \|\px u(s)\|_{L^{\infty}}}\|v_0\|_{L^2}\]
for all $t > 0$, thanks to global boundedness of $\|\px u(u)\|_{L^{\infty}}$. Finally, for given $T > 0$, a direct computation with $v = u_t = - (-1)^{j+1} \partial_x^{2j+1} u -  uu_x -K_{\lambda}u$
gives
\[\begin{aligned}
\|\partial_x^{2j+1}u\|_{L^2} \lesssim&~{} \|u\|_{L^2} + \|v\|_{L^2} + \|u\|_{L^2}\|u_x\|_{L^{\infty}}\\
\le&~{} C(\|u_0\|_{L^2} + \|v_0\|_{L^2} + \|u_0\|_{L^2}^3) + \frac12 \|\partial_x^{2j+1}u\|_{L^2},
\end{aligned}\]
for some $C>0$ depending on $T$, which in addition to \eqref{eq:2j+1v_0} implies
\[\|u\|_{L^{\infty}(0,T;H^{2j+1})} \le \alpha_{T,2j+1}(\norm{u_0}_{L^2})\|u_0\|_{H^{2j+1}}.\]
For $s \in (2j+1) \N$, one can show the global well-posedness similarly, and for $(2j+1)(n-1) < s < (2j+1)n$, $n \in \N$, it follows from the interpolation argument. Therefore, we complete the proof.
\end{proof}

\section{Local controllability and stability: Nonlinear results} \label{sec5} This section devotes to proving  Theorems \ref{control_zhang} and \ref{stability_zhang}.
\subsection{Proof of Theorem \ref{control_zhang}}
Rewrite the system \eqref{k5} in its equivalent integral equation form:
\begin{equation}\label{eq:lr}
u(t)=W(t) u_{0}+\int_{0}^{t} W(t-\tau)(G h)(\tau) d \tau-\int_{0}^{t} W(t-\tau)\left(u u_{x}\right)(\tau) d \tau.
\end{equation}
Define
$$\omega(T, u) :=\int_{0}^{T} W(T-\tau)\left(u u_{x}\right)(\tau) d \tau.$$
Then, Lemma \ref{lem:Xsb} (3) and (4) yield
\begin{equation}\label{eq:wT}
\begin{aligned}
\norm{\omega(T,u)}_{H^s} \lesssim&~{} \left\|\bra{k}^s\mathcal F\left(\int_{0}^{t} W(t-\tau)\left(u u_{x}\right)(\tau) d \tau\right)\right\|_{\ell_k^2L_{\tau}^1}
\lesssim~{} \norm{u}_{Y_T^s}^2 < \infty,
\end{aligned}
\end{equation}
provided that $u \in Y_T^s$. Choose $h=\Phi(u_0,u_1+\omega(T,u))$ in the equation \eqref{eq:lr} for $u \in Y_T^s$. From Theorem \ref{thm:EC}, we have that for given $u_0$ and $u_1$
\begin{equation}\label{eq:map}
u(t)=W(t) u_{0}+\int_{0}^{t} W(t-\tau)\left(G \Phi\left(u_{0}, u_{1}+\omega(T, u)\right)\right)(\tau) d \tau-\int_{0}^{t} W(t-\tau)\left(u u_{x}\right)(\tau) d \tau
\end{equation}
with
$\left.u\right|_{t=0}=u_{0}\quad \mbox{and} \quad \left.u\right|_{t=T}=u_{1}.$
Then, the following lemma proves Theorem \ref{control_zhang}.
\begin{lemma}\label{lem:CMP}
Let $s \ge 0$ and $T > 0$ be given. Let define a map $\Gamma : H^s \to C_TH^s$ as in \eqref{eq:map} (denoted by $\Gamma u$). Then, there exists $\delta = \delta(T) > 0$ such that if $\norm{u_0}_{H^s}\le \delta$ and $\norm{u_1}_{H^s} \le \delta$,
then the map $\Gamma$ is a contraction map on a suitable ball.
\end{lemma}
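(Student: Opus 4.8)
The plan is to run a contraction-mapping argument for $\Gamma$ in the Bourgain solution space $Y_T^s$ rather than directly in $C_TH^s$; the embedding $\norm{u}_{C_TH^s}\lesssim\norm{u}_{Y_T^s}$ from Lemma~\ref{lem:Xsb}(1) then transfers the conclusion back to $C_TH^s$. First I would estimate $\norm{\Gamma u}_{Y_T^s}$ term by term in \eqref{eq:map}. The free term is handled by Lemma~\ref{lem:Xsb}(2), giving $\norm{W(t)u_0}_{Y_T^s}\lesssim\norm{u_0}_{H^s}$. For the control term $\int_0^t W(t-\tau)\big(G\Phi(u_0,u_1+\omega(T,u))\big)\,d\tau$, I would apply Lemma~\ref{lem:Xsb}(3) together with Remark~\ref{rem:nonlinear embedding} to bound it by $\norm{G\Phi(u_0,u_1+\omega(T,u))}_{L^2(0,T;H^s)}$; since $G$ is bounded on $L^2(0,T;H^s)$ and $\Phi$ obeys \eqref{eq:phi}, this is $\lesssim\norm{u_0}_{H^s}+\norm{u_1}_{H^s}+\norm{\omega(T,u)}_{H^s}$, and \eqref{eq:wT} controls $\norm{\omega(T,u)}_{H^s}\lesssim\norm{u}_{Y_T^s}^2$. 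Finally the Duhamel nonlinearity $\int_0^t W(t-\tau)(uu_x)\,d\tau$ is estimated by Lemma~\ref{lem:Xsb}(3)--(4), yielding $\lesssim\norm{u}_{Y_T^s}^2$ (legitimate since $s\ge 0\ge-\tfrac{j}{2}$). Collecting these,
\[
\norm{\Gamma u}_{Y_T^s}\le C\big(\norm{u_0}_{H^s}+\norm{u_1}_{H^s}\big)+C\norm{u}_{Y_T^s}^2,
\]
with $C=C(T)$.

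Next I would fix the radius of the ball. On $B_\rho=\{v\in Y_T^s:\norm{v}_{Y_T^s}\le\rho\}$ with $\rho=4C\delta$ and $\norm{u_0}_{H^s},\norm{u_1}_{H^s}\le\delta$, the bound above gives $\norm{\Gamma u}_{Y_T^s}\le 2C\delta+C\rho^2\le\rho$ provided $8C^2\delta\le 1$, so $\Gamma$ maps $B_\rho$ into itself once $\delta$ is small. For the contraction I would exploit linearity of $\Phi$: for $u,\underline u\in B_\rho$ associated to the same $(u_0,u_1)$, the free and $u_0,u_1$ contributions cancel, leaving
\[
\Gamma u-\Gamma\underline u=\int_0^t W(t-\tau)\big(G\Phi(0,\omega(T,u)-\omega(T,\underline u))\big)\,d\tau-\int_0^t W(t-\tau)\,\partial_x\!\big(\tfrac12(u^2-\underline u^2)\big)\,d\tau.
\]
Writing $u^2-\underline u^2=(u-\underline u)(u+\underline u)$ and applying the same estimates (Lemma~\ref{lem:Xsb}(3)--(4) and \eqref{eq:wT} for the difference $\omega(T,u)-\omega(T,\underline u)$), both pieces are $\lesssim\rho\,\norm{u-\underline u}_{Y_T^s}$. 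Hence $\Gamma$ is a contraction on $B_\rho$ after shrinking $\rho$, i.e. $\delta$, so that the contraction constant is $<1$; the unique fixed point is the desired solution and, by construction of \eqref{eq:map}, satisfies $\left.u\right|_{t=0}=u_0$ and $\left.u\right|_{t=T}=u_1$.

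The only genuinely nontrivial ingredient is the control of the single derivative in the nonlinearity $uu_x=\tfrac12\partial_x(u^2)$. This is precisely where the smoothing weight $\bra{\tau-k^{2j+1}}^{-1}$ in Lemma~\ref{lem:Xsb}(3) must compensate the derivative loss through the bilinear estimate Lemma~\ref{lem:Xsb}(4); everything else is a routine assembly of the linear controllability bound \eqref{eq:phi}, the boundedness of $G$ on $L^2(0,T;H^s)$, and the quadratic bound \eqref{eq:wT} for the correction $\omega(T,u)$. Because $s\ge 0$ comfortably satisfies the threshold $s\ge-\tfrac{j}{2}$ of Lemma~\ref{lem:Xsb}(4), no low-regularity obstruction arises, and smallness of $\delta$ simultaneously secures the self-mapping property and the contraction.
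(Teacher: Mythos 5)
Your proposal is correct and follows essentially the same route as the paper: a contraction-mapping argument in $Y_T^s$ built from Lemma \ref{lem:Xsb} (with Remark \ref{rem:nonlinear embedding}), the linear control bound \eqref{eq:phi}, and the quadratic bound \eqref{eq:wT}, with smallness of $\delta$ securing both the self-mapping property and the contraction. Your explicit use of the linearity of $\Phi$ to cancel the data terms in the difference $\Gamma u-\Gamma\underline u$ is just a more detailed writing of the step the paper states implicitly, so there is nothing genuinely different to compare.
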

\begin{remark}
The standard Picard iteration argument ensures the uniqueness of the fixed point, hence the condition $u(T,x) = u_1$ is guaranteed.
\end{remark}

\begin{proof}[Proof of Lemma \ref{lem:CMP}]
We denote the maximum implicit constant among ones appearing in Lemma \ref{lem:Xsb}, \eqref{eq:phi} and \eqref{eq:wT} by $C >0$. Note that here the constant $C$ depends on time $T>0$, precisely, $C$ is increasing when $T$ grows up.

So, using Lemma \ref{lem:Xsb} with Remark \ref{rem:nonlinear embedding}, \eqref{eq:phi} and \eqref{eq:wT}, one has
\[\norm{\Gamma u}_{Y_T^s} \le C\left( \norm{u_0}_{H^s} + \norm{u}_{Y_T^s}^2 + \left(\norm{u_0}_{H^s} + \norm{u_1}_{H^s} + \norm{u}_{Y_T^s}^2\right)\right).\]
Analogously, for solutions $u$ and $\underline{u}$ with $u(0) = \underline{u}(0)$ and $u(T) = \underline{u}(T)$, we have
\[\norm{\Gamma u - \Gamma \underline{u}}_{Y_T^s} \le 2C\left(\norm{u}_{Y_T^s} + \norm{\underline{u}}_{Y_T^s}\right)\norm{u-\underline{u}}_{Y_T^s}.\]
Taking $\delta >0$ satisfying $48C^2\delta \le 1$\footnote{Here $\delta$ depends on $T$ since the constant relies on $T$.}, we conclude that the map $\Gamma$ is contractive in a ball
\[\{v \in Y_T^s : \norm{v}_{Y_T^s} \le 6C\delta\},\]
thus this completes the proof.
\end{proof}

\subsection{Proof of Theorem \ref{stability_zhang}}We are now ready to prove Theorem \ref{stability_zhang}.
For given $s \ge 0$ and $\lambda >0$, we have from Proposition \ref{zhang_2} that
\[\norm{W_{\lambda}(t)u_0}_{H^s} \le Ce^{-\lambda t} \norm{u_0}_{H^s},\]
where the implicit constant $C>0$ depends only  on $s$. For any $0 < \lambda' < \lambda$, take $T = T(\lambda') > 0$ such that
\[2Ce^{-\lambda T} \le e^{-\lambda' T}.\]

Let us consider solution $u$ to the integral equation \eqref{eq:duhamel} as a fixed point of the map
$$
\Gamma u(t)=W_{\lambda}(t) u_{0}-\int_{0}^{t} W_{\lambda}(t-\tau)\left(u u_{x}\right)(\tau) d \tau
$$
in some closed ball $B_{R}(0)$ in the function space $Y_T^s$. This will be done provided that $\left\|u_{0}\right\|_{H^s} \leq \delta$ where $\delta$ is a small number to be determined. Furthermore, to ensure the exponential stability with the claimed decay rate, the numbers $\delta$ and $R$ will be chosen in such a way that
$$
\|u(T)\|_{H^s} \leq e^{-\lambda^{\prime} T}\left\|u_{0}\right\|_{H^s}.
$$
Applying Lemmas \ref{lem:modified Xsb} and \ref{lem:Xsb} (4), there exist some positive constant $C_{1}, C_{2}$ (independent of $\delta$ and $R$ ) such that
\[\norm{\Gamma u}_{Y_T^s} \le C_1\norm{u_0}_{H^s} + C_2\norm{u}_{Y_T^s}^2\]
and for $u-\underline{u}$ with $u(0) =\underline{u}(0)$,
\[\norm{\Gamma u - \Gamma \underline{u}}_{Y_T^s} \le C_2\left(\norm{u}_{Y_T^s} + \norm{\underline{u}}_{Y_T^s}\right)\norm{u-\underline{u}}_{Y_T^s}.\]
On the other hand, we have for some constant $C^{\prime}>0$ and all $u \in B_{R}(0)$
$$
\begin{aligned}
\|\Gamma(u)(T)\|_{Y_T^s} & \leq C_{1}\left\|W_{\lambda}(T) u_{0}\right\|_{Y_T^s}+C_{2}\left\|\int_{0}^{T} W_{\lambda}(T-\tau)\left(u u_{x}\right)(\tau) d \tau\right\|_{Y_T^s} \\
& \leq e^{-\lambda T} \delta+C^{\prime} R^{2}.
\end{aligned}
$$

Pick $\delta=C_{4} R^{2},$ where $C_{4}$ and $R$ are chosen so that
$$
\frac{C^{\prime}}{C_{4}} \leq C e^{-\lambda T},\quad \left(C_{1} C_{4}+C_{2}\right) R^{2} \leq R \quad \text{and} \quad 2 C_{2} R \leq \frac{1}{2}.
$$
Then we have
$$
\|\Gamma(u)\|_{Y_T^s} \leq R, \quad\forall u \in B_{R}(0)
$$
and
$$
\left\|\Gamma\left(u_{1}\right)-\Gamma\left(u_{2}\right)\right\|_{Y_T^s} \leq \frac{1}{2}\left\|u_{1}-u_{2}\right\|_{\mathbb{Z}_{T, \sigma}^{T}}, \quad\forall u_{1}, u_{2} \in B_{R}(0).
$$
Therefore, $\Gamma$ is a contraction in $B_{R}(0) .$ Furthermore, its unique fixed point $u \in B_{R}(0)$ fulfills
$$
\|u(T)\|_{H^s} \leq\|\Gamma(u)(T)\|_{Y_T^s} \leq e^{-\lambda^{\prime} T} \delta
$$

Assume now that $0<\left\|u_{0}\right\|_{0}<\delta .$ Changing $\delta$ into $\delta^{\prime} \equiv\left\|u_{0}\right\|_{s}$ and $R$ into $R^{\prime} \equiv$ $\left(\delta^{\prime} / \delta\right)^{\frac{1}{2}} R,$ we infer that
$$
\|u(T)\|_{H^s} \leq e^{-\lambda^{\prime} T}\left\|u_{0}\right\|_{H^s}
$$
and an obvious induction yields
$$
\|u(n T)\|_{H^s} \leq e^{-\lambda^{\prime} n T}\left\|u_{0}\right\|_{H^s}
$$
for any $n \geq 0 .$ We infer by the semigroup property that there exists some positive constant $C>0$ such that
$$
\|u(t)\|_{H^s} \leq C e^{-\lambda{t}}\left\|u_{0}\right\|_{H^s},
$$
if $\left\|u_{0}\right\|_{H^s} \leq \delta .$ The proof is complete.\qed

\subsection*{Acknowledgments}
R. de A. Capistrano--Filho was supported by CNPq 408181/2018-4, CAPES-PRINT 88881.311964/2018-01, CAPES-MATHAMSUD 88881.520205/2020-01, MATHAMSUD 21-MATH-03 and Propesqi (UFPE). C. Kwak was partially supported by the Ewha Womans University Research Grant of 2020 and the National Research Foundation of Korea (NRF) grant funded by the Korea government (MSIT) (No. 2020R1F1A1A0106876811). F. Vielma Leal was partially supported by FAPESP/Brazil grant 2020/14226-4.  This work was carried out while the second author was visiting the Federal University of Pernambuco. He thanks the institution for their hospitality.

\end{document}